\documentclass[11pt]{article}

\usepackage[T1]{fontenc}
\usepackage[utf8]{inputenc}
\usepackage[margin=1in]{geometry}
\usepackage{mathpazo}
\usepackage{courier}
\usepackage{microtype}
\usepackage{mathtools}
\usepackage{amssymb}
\usepackage{amsthm}
\usepackage{tikz-cd}
\usepackage{enumitem}
\usepackage{xcolor}
\usepackage[numbers,sort&compress]{natbib}
\usepackage{authblk}
\usepackage{hyperref}
\usepackage[nameinlink,noabbrev]{cleveref}

\theoremstyle{plain}
\newtheorem{theorem}{Theorem}[section]
\newtheorem{proposition}[theorem]{Proposition}
\newtheorem{lemma}[theorem]{Lemma}
\newtheorem{corollary}[theorem]{Corollary}
\newtheorem*{maintheorem}{Main Theorem}
\theoremstyle{definition}
\newtheorem{definition}[theorem]{Definition}
\newtheorem{example}[theorem]{Example}

\theoremstyle{remark}
\newtheorem{remark}[theorem]{Remark}

\crefname{theorem}{Theorem}{Theorems}
\crefname{proposition}{Proposition}{Propositions}
\crefname{lemma}{Lemma}{Lemmas}
\crefname{corollary}{Corollary}{Corollaries}
\crefname{definition}{Definition}{Definitions}
\crefname{example}{Example}{Examples}
\crefname{remark}{Remark}{Remarks}
\crefname{notation}{Notation}{Notations}
\crefname{equation}{Equation}{Equations}
\crefname{section}{Section}{Sections}
\Crefname{equation}{Equation}{Equations}
\Crefname{section}{Section}{Sections}

\newcommand{\CAT}{\mathbf{CAT}}
\newcommand{\Cat}{\mathbf{Cat}}
\newcommand{\SCoFib}{\mathbf{SCoFib}}
\newcommand{\ClCoFib}{\mathbf{ClCoFib}}
\newcommand{\NClCoFib}{\mathbf{NClCoFib}}
\newcommand{\SFib}{\mathbf{SFib}}
\newcommand{\Alg}{\mathbf{Alg}_{\mathrm{s}}}
\newcommand{\PsAlg}{\mathbf{PsAlg}}
\newcommand{\OFSch}{\mathbf{OFS}_{\mathrm{ch}}}
\newcommand{\CoCart}{\operatorname{CoCart}}
\newcommand{\Vrt}{\operatorname{Vert}}
\newcommand{\Iso}{\operatorname{Iso}}
\newcommand{\Dom}{\operatorname{Dom}}
\newcommand{\Cod}{\operatorname{Cod}}
\newcommand{\C}{\mathbb{C}}
\newcommand{\Sq}{\mathbb{S}}
\newcommand{\Fact}{\mathsf{Fact}}
\numberwithin{equation}{section}
\title{From Grothendieck cofibrations to factorization systems: \\a formal 2-monadic account}
\author[1]{Fernando Lucatelli Nunes}
\author[2]{Walter Tholen}
\affil[1]{CMUC, Department of Mathematics, University of Coimbra,
3000-143 Coimbra, Portugal}
\affil[2]{Department of Mathematics and Statistics, York University,
4700 Keele Street, Toronto, Ontario M3J 1P3, Canada}
\date{}

\begin{document}
\maketitle

\begin{abstract}
Grothendieck cofibrations describe transport in a category varying over
a base, while factorization systems organize the arrows of a category
into two complementary classes.  We give a fully 2-categorical account
of the passage from the former structure to the latter.  The global
comma 2-monad on the arrow 2-category encodes Grothendieck transport,
whereas the squaring 2-monad encodes factorizations.  We prove that
split cofibrations are precisely the strict algebras for the comma
2-monad, including their 1-cells and 2-cells, and that normally cloven
cofibrations are precisely its normal pseudoalgebras.  A canonical
colax morphism from the comma 2-monad to the squaring 2-monad then
turns cocartesian transport into the cocartesian--vertical
factorization of arrows in the total category.  At the strict level,
this yields the strict factorization system of designated
cocartesian and vertical arrows; at the coherent level, it yields the
orthogonal factorization system whose left class consists of all
cocartesian arrows and whose right class consists of the arrows sent
to isomorphisms in the base.  We also separate unrestricted global
pseudoalgebras, which retain a coherently trivial base action, from
fixed-base pseudoalgebras, which correspond to arbitrary cleavages,
and record the dual strict result for fibrations.  This places the
classical cofibration--factorization interaction, in all these
variants, within a single change-of-2-monads construction and relates
it directly to the existing fibrational and factorization literature.
\end{abstract}

\noindent\textbf{Keywords:} Grothendieck fibration; cloven
fibration; split fibration; 2-monad; pseudoalgebra; factorization
system.

\medskip
\noindent\textbf{2020 Mathematics Subject Classification:}
18N15, 18N10, 18C15, 18A32, 18D30.

\section{Introduction}

We begin with a conceptual and then a precise account of the results
proved in this paper.  In the second part of the introduction, we place
them in the extensive literature on Grothendieck fibrations,
2-dimensional monad theory, and factorization systems.

A Grothendieck cofibration $P\colon E\to B$ may be regarded as a
category of objects and arrows varying over the base $B$.  An arrow of
the base prescribes a direction in which an object of one fibre may be
transported to another fibre.  A cleavage makes this transport
explicit by choosing a cocartesian lift, and a split cleavage makes
identity and composite transport strictly functorial.  This simple
idea is one of the standard categorical forms of substitution and
change of parameters: the fibres retain the local data, while the
cocartesian arrows describe how those data move.
The dual fibrational viewpoint is the organizing perspective of
categorical logic and type theory \citep[Chapter~1]{Jacobs1999}.

A factorization system organizes a different kind of structure.
Instead of transporting objects between fibres, it decomposes every
arrow of a single category into two successive stages, governed by a
unique lifting property.  For the total category $E$ of a
cofibration, a cleavage already suggests such a decomposition: first
transport the domain along the image of the arrow in $B$, and then
complete the arrow inside the resulting fibre.  The existence of this
factorization is elementary.  What is less visible is why its strict
and coherent forms, its functoriality under varying base categories,
and the resulting orthogonality should all be controlled by the same
piece of structure.

The answer is that transport and factorization are algebras for two
different 2-monads.  The comma 2-monad remembers successive movement
in the base; the squaring 2-monad remembers the middle object of a
factorization.  The comparison constructed below is a colax morphism
between these 2-monads.  Restriction of scalars along it converts the
transport action into the factorization action.  In this way, the
familiar cocartesian--vertical decomposition is not an additional
choice placed on the total category: it is the image of the original
Grothendieck transport under a canonical change of 2-monads.

Let $P\colon E\to B$ be a Grothendieck cofibration.  A cleavage
transports $x\in E$ along $u\colon Px\to b$ by a chosen cocartesian
arrow
\begin{equation*}
  \delta^x_u\colon x\longrightarrow u_!x.
\end{equation*}
The same arrow determines, for every $f\colon x\to y$, a unique
vertical arrow $\nu_f$ and hence a factorization
\begin{equation}
\begin{tikzcd}[column sep=huge]
x \arrow[r,"\delta^{x}_{Pf}"]
  & (Pf)_{!}x \arrow[r,"\nu_f"]
  & y .
\end{tikzcd}
\qquad f=\nu_f\delta^{x}_{Pf}.
\label{eq:intro-factorization}
\end{equation}
The aim of this paper is to formalize the passage from the transport
$\delta$ to the factorization in \Cref{eq:intro-factorization}.  The
point is not merely that the factorization exists or is functorial.  Its
entire algebraic structure is obtained from the transport action by a
change of 2-monads.

The first 2-monad is the global comma 2-monad $\C$ on the strict arrow
2-category $\CAT^{\mathbf 2}$.  At $P\colon E\to B$ it is
\begin{equation*}
  \C P=\Cod_{P}\colon
  P\!\downarrow\!B\longrightarrow B.
\end{equation*}
Its unit inserts identity base arrows and its multiplication composes
successive base arrows.  The second is the squaring 2-monad $\Sq$ on
$\CAT$, given by $\Sq E=E^{\mathbf 2}$.  We construct a 2-natural
transformation
\begin{equation*}
  \kappa\colon\Sq\Dom\Longrightarrow\Dom\C,
  \qquad
  \kappa_P(f\colon x\to y)=(x,Pf),
\end{equation*}
and prove that $(\Dom,\kappa)$ is a colax morphism of 2-monads.  Thus
restriction of scalars turns a comma-monad action into a squaring-monad
action.  Applied to the action determined by a cleavage, this is exactly
\Cref{eq:intro-factorization}.

There are two levels of this statement.  Write $\SCoFib$ for split
cofibrations and strictly cleavage-preserving squares, and write
$\NClCoFib$ for normally cloven cofibrations and squares whose total
functors preserve cocartesian arrows; in both cases all compatible
arrow-2-category 2-cells are retained.  The two rows of
\begin{equation}
\begin{tikzcd}[column sep=large,row sep=large]
\SCoFib
  \arrow[r,"\Gamma", "\cong" description]
  \arrow[d,hook]
& \Alg(\C)
  \arrow[r,"\kappa^{*}"]
  \arrow[d,hook]
& \Alg(\Sq)
  \arrow[d,hook] \\
\NClCoFib
  \arrow[r,"\Gamma_{\mathrm n}", "\cong" description]
& \PsAlg_{\mathrm n}(\C)
  \arrow[r,"\kappa^{*}_{\mathrm{ps}}"']
& \PsAlg_{\mathrm n}(\Sq)
\end{tikzcd}
\label{eq:intro-architecture}
\end{equation}
commute.  Here $\Alg$ denotes strict algebras and strict morphisms,
whereas $\PsAlg_{\mathrm n}$ denotes normal pseudoalgebras,
pseudomorphisms, and algebra transformations.  By
\citet[Theorem~2.4]{KorostenskiTholen1993}, the lower-right term is
isomorphic to
$\OFSch$, the 2-category of orthogonal factorization systems with normalized
chosen factorizations and class-preserving functors.  Consequently,
\Cref{eq:intro-architecture} records both the strict and coherent
passages from Grothendieck transport to factorization.

\begin{maintheorem}[The 2-monadic passage from transport to
factorization]
There are isomorphisms of 2-categories over $\CAT^{\mathbf 2}$
\begin{equation*}
  \SCoFib\cong\Alg(\C),
  \qquad
  \NClCoFib\cong\PsAlg_{\mathrm n}(\C).
\end{equation*}
Restriction of scalars along the colax monad morphism
$(\Dom,\kappa)$ gives the two horizontal
2-functors in \Cref{eq:intro-architecture}.  For every normally cloven
cofibration $P\colon E\to B$, the resulting normal pseudo-$\Sq$-algebra
selects the factorization in \Cref{eq:intro-factorization} and determines
the orthogonal factorization system
\begin{equation*}
  \bigl(\CoCart(P),P^{-1}\Iso(B)\bigr).
\end{equation*}
If the cleavage is split, the action is strict and its strict
factorization system is
\begin{equation*}
  (\Delta_P,\Vrt(P)),
\end{equation*}
where $\Delta_P$ is the class of designated cocartesian arrows.
Over a fixed base, the same restriction applies to every cleavage: the
orthogonal classes remain
$(\CoCart(P),P^{-1}\Iso(B))$, while the pseudo-$\Sq$-unit records the
possibly non-normalized factorization of each identity.
\end{maintheorem}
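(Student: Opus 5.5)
The proof assembles several independent pieces. Each piece is verified directly from the definitions of $\C$, $\Sq$ and $\kappa$, and the pieces are then glued together using the composite in \Cref{eq:intro-architecture}.

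\textbf{Step 1: the isomorphism $\SCoFib\cong\Alg(\C)$.}
A strict $\C$-algebra structure on $P\colon E\to B$ is a morphism $(a,\mathrm{id}_B)\colon \Cod_P\to P$ in $\CAT^{\mathbf 2}$. I expect that the base component is forced to be the identity by the unit and multiplication laws; this needs to be checked. The algebra structure therefore amounts to a functor $a\colon P\!\downarrow\!B\to E$ with $Pa=\Cod$. I would define $u_!x:=a(x,u)$.

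The image under $a$ of the canonical morphism $(\mathrm{id}_x,\cdot)\colon(x,\mathrm{id}_{Px})\to(x,u)$ in $P\!\downarrow\!B$, precomposed with the unit iso $x=a(x,\mathrm{id})$, gives $\delta^x_u$. The unit and associativity laws then say exactly that $\mathrm{id}_!=\mathrm{id}$ and $(vu)_!=v_!u_!$ strictly, on objects and on the chosen lifts.

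The genuinely nontrivial point is that $\delta^x_u$ is cocartesian. Given $g\colon x\to z$ with $Pg=wu$, the arrow $(g,w)$ is a morphism $(x,u)\to(z,\mathrm{id})$ in $P\!\downarrow\!B$. Applying $a$ yields the factorization through $\delta^x_u$, and uniqueness follows from the associativity law.

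Conversely, a split cleavage defines $a$ on morphisms $(f,w)\colon(x,u)\to(x',u')$ as the unique arrow $u_!x\to u'_!x'$ over $w$ induced by cocartesianness. Functoriality of this $a$ follows from uniqueness.

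It remains to match 1-cells and 2-cells. Strict algebra morphisms correspond to squares preserving the chosen lifts. Algebra 2-cells are exactly the 2-cells of $\CAT^{\mathbf 2}$ satisfying the compatibility condition, and I would show that this condition holds automatically for cocartesian-compatible pairs.

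\textbf{Step 2: the normal pseudo version $\NClCoFib\cong\PsAlg_{\mathrm n}(\C)$.}
Normality makes the unit strict. Associativity becomes a coherent iso $a(a(x,u),v)\cong a(x,vu)$, which is the canonical comparison between chosen lifts. I would run the Step 1 argument again: cocartesianness of $\delta$ comes from functoriality of $a$, uniqueness now holds up to the associator, and the pentagon-type coherence is automatic because the comparisons are unique. Pseudomorphisms $(F,\phi)$ carry invertible $\phi\colon F(u_!x)\cong(Gu)_!Fx$. This is equivalent to $F$ preserving cocartesian arrows: any functor preserving cocartesian arrows has a unique such $\phi$, and conversely the existence of $\phi$ forces $F\delta$ to be cocartesian. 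Consequently pseudomorphism structure is property-like, and the correspondence is an isomorphism rather than an equivalence.

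\textbf{Step 3: the colax morphism and the horizontal 2-functors.}
I would verify 2-naturality of $\kappa_P(f)=(x,Pf)$, together with the two colax axioms. The unit axiom holds because $\kappa(\mathrm{id}_x)=(x,\mathrm{id})$. The multiplication axiom compares a composable pair in $E$ with composition in the comma category, and holds on the nose. Restriction of scalars along a colax morphism is the standard 2-functor on strict algebras and on normal pseudoalgebras, and it gives the horizontal arrows. Commutativity of \Cref{eq:intro-architecture} is immediate, since each $\Gamma$ is the identity on underlying objects of $\CAT^{\mathbf 2}$.

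\textbf{Step 4: the resulting factorization systems.}
The restricted $\Sq$-action sends $f$ to $a(x,Pf)$, with the two legs $\delta^x_{Pf}$ and the induced $\nu_f$; this is \Cref{eq:intro-factorization}. Applying \citet[Theorem~2.4]{KorostenskiTholen1993}, the left class is the closure of the $\delta$'s under isos, which is $\CoCart(P)$. The right class consists of the arrows whose factorization has invertible left leg. Since $P\nu_f=\mathrm{id}$ and $\delta^x_{Pf}$ lies over $Pf$, the arrow $f$ lies in the right class iff $Pf$ is invertible, which gives $P^{-1}\Iso(B)$. In the split case the strict factorization system is $(\Delta_P,\Vrt(P))$ by direct inspection. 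For the fixed-base statement, I would repeat Step 2 with the base component fixed, so that the unit is no longer strict; the unit iso then records $x\cong(\mathrm{id})_!x$, and the classes are unchanged.

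The main obstacle is Step 2: the coherence bookkeeping, and in particular showing that pseudomorphism data is unique. That uniqueness is what upgrades the correspondence from an equivalence to an isomorphism. I would settle it first, using the universal property of cocartesian arrows.
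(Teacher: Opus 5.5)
Your decomposition matches the paper's: a componentwise strict comparison, then the normal pseudo comparison with property-like pseudomorphism structure, then restriction of scalars along $(\Dom,\kappa)$, then Korostenski--Tholen for the classes. However, the one non-formal step in both comparisons is left unproved, and in Step~2 it is misdescribed. That step is the uniqueness half of cocartesianness of $\delta^x_u=A(1_x,u)$. Suppose $h\colon u_!x\to z$ lies over $w$ and $h\delta^x_u=g$. Then $(g,w)=H^P(h)\,(\delta^x_u,1_b)$ in $P\!\downarrow\!B$, so $A(g,w)=h\,A(\delta^x_u,1_b)$. Uniqueness therefore reduces exactly to the absorption identity $A(\delta^x_u,1_b)=1_{u_!x}$. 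Saying it ``follows from the associativity law'' is true, but only once you name the instance. Evaluate $AM^P=A\overline A$ on the arrow $\varpi_{x,u}=((1_x,u),1_b)\colon((x,1_{Px}),u)\to((x,u),1_b)$ of $\C^2P$. The functor $M^P$ sends it to an identity, and $\overline A$ sends it to $(\delta^x_u,1_b)$.

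In the normal pseudo case, a uniqueness that held only ``up to the associator'' would not make $\delta^x_u$ cocartesian. It has to hold on the nose, and it does. Naturality of $\chi$ at $\varpi_{x,u}$ gives $\chi^x_{u,1_{Px}}=\chi^x_{1_b,u}\,A(\delta^x_u,1_b)$, and the two unit axioms with $\iota=1$ say that both of these components are identities. Alternatively, $AH^P=1_E$ already makes $A(\delta^x_u,1_b)$ idempotent, so its invertibility forces it to be the identity.

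You also need this cocartesianness, which you only asserted, to identify the compositor of an arbitrary normal pseudoaction with the canonical comparison. Naturality of $\chi$ at $\omega_{x,u,v}=((1_x,u),vu)$ gives $\chi^x_{v,u}\delta^{u_!x}_v\delta^x_u=\delta^x_{vu}$, and cancelling the cocartesian composite determines $\chi$. You should likewise note that the unit axioms kill the possible base components of $\chi$ and of the pseudomorphism cells in $\CAT^{\mathbf 2}$; without this the assignments are not bijective.

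Finally, ``repeat Step~2'' is not enough over a fixed base. There $\delta^x_u:=A(1_x,u)\iota_x$, and absorption becomes $A(\delta^x_u,1_b)=\iota_{A(x,u)}$. The idempotence trick fails there, and cocartesianness comes instead from the adjunction $A\dashv H^P$ in $\CAT/B$ with counit $\iota^{-1}$. The selected factors are $F^P(1_x,f)\,\iota_x$ and $\iota_y^{-1}F^P(f,1_y)$. Orthogonality of $(\CoCart(P),P^{-1}\Iso(B))$ follows by normalizing the cleavage, which leaves $\CoCart(P)$ unchanged. With these points supplied, the rest of your plan goes through as in the paper.
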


The strict comparison is proved componentwise.  A strict action
$A\colon P\!\downarrow\!B\to E$ defines
\begin{equation*}
  u_!x=A(x,u),
  \qquad
  \delta^x_u=A(1_x,u).
\end{equation*}
The action unit gives the identity transport, while the action
multiplication gives composite transport.  A further consequence of the
multiplication law, the absorption identity, supplies the cocartesian
universal property.  Conversely, a split cleavage defines $A$ by
cocartesian factorization.  We prove that these assignments are inverse
on objects, that strict algebra morphisms are exactly the squares
preserving the designated lifts, and that every arrow-2-category 2-cell
between such squares is automatically an algebra 2-cell.

The pseudoalgebra calculation separates normality from strictness.  A
normal pseudoaction gives identity transport strictly and composite
transport through the canonical vertical isomorphisms
\begin{equation*}
  \chi^x_{v,u}\colon v_!(u_!x)\xrightarrow{\ \cong\ }(vu)_!x.
\end{equation*}
Their triangle and pentagon axioms follow from cocartesian uniqueness.
Pseudomorphisms are precisely the squares preserving cocartesian arrows;
at $(x,u)$, their comparison is the unique vertical isomorphism between
the codomains of the chosen lift $\delta^{Tx}_{Su}$ and the image
$T\delta^x_u$ that makes the lift triangle commute.  Thus normal
pseudoalgebras replace split transport by
coherently associative chosen transport, and strict factorization
systems by ordinary orthogonal factorization systems with chosen
factorizations.

Normality is essential in the global statement.  An unrestricted
pseudo-$\C$-action in $\CAT^{\mathbf 2}$ has a base component
$R\colon B\to B$ whose unit includes an isomorphism
$1_B\cong R$.  Its distinguished lift of $u\colon Px\to b$ therefore
lands over $Rb$, not strictly over $b$.  Thus unrestricted global
pseudoalgebras contain additional coherently trivial base data and are
not the same data as cloven Grothendieck cofibrations; the analogous
fibrational obstruction is noted in
\citet[Remark~2.7(b)]{EmmeneggerEtAl2024}.  Over the strict slice
$\CAT/B$, no base twisting is possible, and arbitrary pseudoalgebras
correspond to cloven cofibrations over $B$.  Restriction along $\kappa$
then gives the same orthogonal classes as in the normal case, but the
factorization of $1_x$ is the chosen lift over $1_{Px}$ followed by its
inverse; see \Cref{cor:fixed-base-pseudo-factorization}.

The comma-monad description has a substantial history.  Gray first
constructed the free split (co)fibration over a fixed base
\citep[Theorem~3.9]{Gray1966}, supplying the free-algebra part of the
classical monadic description.  Street formulated fibrations in a 2-category by
means of comma objects and an adjoint-to-the-unit criterion
\citep{Street1974}, and later placed both their algebra structures and
the Grothendieck construction
in the setting of lax-idempotent doctrines and bicategories
\citep{Street1980}.  We use ``cofibration'' in Grothendieck's sense,
also called opfibration; Street's variance conventions are transferred
by reversing the relevant 2-cells.  The Eilenberg--Moore formulation is
part of Street's formal theory of monads \citep{Street1972}, with the
2-categorical conventions of \citet{KellyStreet1974}.

The global monad itself is not new.
\citet[Proposition~16]{FauserVickers2014} construct the corresponding
2-monad on the arrow 2-category of a representable 2-category.
\citet[Proposition~17]{FauserVickers2014} identify pseudoalgebras
whose action has identity base component with pseudo-opfibrations, and
identify their normalized instances with opfibrations.  In $\CAT$
this is the monad $\C$ used here.
\citet[Theorem~8.1]{PeschkeTholen2020} obtain the free
split-cofibration 2-adjunction as part of a larger web of global
2-adjunctions.  The fixed-base strict and pseudo identifications,
including their morphisms, are recorded in fibrational variance by
\citet[Theorem~2.6]{EmmeneggerEtAl2024}.  The same variation with the
base underlies the indexed category of split (op)fibrations whose
effective descent morphisms are studied by
\citet{LucatelliNunesPrezadoSousa2023}.  Our first contribution is a
direct global proof of the complete strict comparison, including its 1-cells
and 2-cells, together with the parallel comparison for normal
pseudoalgebras in the
same conventions.

The main contribution is the right-hand passage in
\Cref{eq:intro-architecture}.  The same-2-category prototype for
pseudoalgebraic restriction of scalars is
\citet[Theorem~A.7]{PerroneTholen2022}.  Since our comparison lies over
$\Dom\colon\CAT^{\mathbf 2}\to\CAT$, we prove the required relative
form in \Cref{prop:pseudo-restriction-scalars}.  The identification of
normal pseudo-$\Sq$-algebras with orthogonal factorization systems is
given by \citet[Theorem~2.4]{KorostenskiTholen1993}; the strict and lax
variants are studied in
\citet{RosebrughWood2002Distributive,RosickyTholen2002}.  At the level of arrow
classes, the resulting system is the cofibrational dual of the
prefibrational factorization in
\citet[Section~3.7]{RosickyTholen2007}.  Our change
of 2-monads explains why that system is attached to a cofibration and
how its chosen factorizations vary under squares and 2-cells.

Comprehensive factorization instead factors a functor as an initial
functor followed by a discrete opfibration
\citep{StreetWalters1973}.  In the present construction $P$ remains
fixed and the arrows factored are those of its total category $E$.

The paper is organized as follows.  \Cref{sec:cofibrations} fixes the
2-categorical and fibrational conventions.  \Cref{sec:monad} constructs
the global comma 2-monad, its free-algebra adjunction, and its
lax-idempotence.  \Cref{sec:monadicity} proves strict 2-monadicity.
\Cref{sec:pseudoalgebras} identifies normal pseudoalgebras with normally
cloven cofibrations and analyzes the global base-twisting phenomenon.
\Cref{sec:squaring} constructs $\kappa$ and derives the strict and
pseudo factorization results.  \Cref{sec:dual} records the dual strict
theorem for fibrations.

\paragraph{Size convention}
We work relative to a fixed enlargement of Grothendieck universes.
The symbol $\CAT$ denotes the resulting 2-category of locally small
categories, functors, and natural transformations; all categories and
comma objects under discussion belong to the chosen enlargement.  No
argument depends on the particular universe convention.
We write composites from right to left, so that $gf$ means
$g\mathbin{\circ}f$.

\section{Split cofibrations and the arrow 2-category}
\label{sec:cofibrations}

\subsection{The arrow 2-category}

Let $\mathbf 2=(0\to1)$ be the walking arrow.  We use
\begin{equation*}
\CAT^{\mathbf 2}=[\mathbf 2,\CAT]
\end{equation*}
for the strict arrow 2-category.  Its objects are functors
$P\colon E\to B$.  A 1-cell $(T,S)\colon P\to Q$, where
$Q\colon F\to C$, is a strictly commutative square
\begin{equation*}
\begin{tikzcd}[column sep=large,row sep=large]
E \arrow[r,"T"] \arrow[d,"P"'] & F \arrow[d,"Q"] \\
B \arrow[r,"S"'] & C
\end{tikzcd}
\qquad\text{with}\qquad QT=SP.
\end{equation*}
A 2-cell
$(\beta,\alpha)\colon(T,S)\Rightarrow(T',S')$ is a pair of
natural transformations $\beta\colon T\Rightarrow T'$ and
$\alpha\colon S\Rightarrow S'$ satisfying
\begin{equation}
Q\beta=\alpha P.
  \label{eq:arrow-2-cell}
\end{equation}
Both compositions are inherited componentwise from $\CAT$.
These conventions follow the standard calculus reviewed by
\citet{KellyStreet1974}.

We write
\begin{equation*}
\Dom,\Cod\colon\CAT^{\mathbf 2}\longrightarrow\CAT
\end{equation*}
for evaluation at $0$ and $1$, respectively.  Thus $\Dom(P)=E$ and
$\Cod(P)=B$.

\subsection{Strict 2-monads and their algebras}

We recall the strict 2-dimensional algebra used in the paper.  This
also fixes what is meant by strict 2-monadicity.

\begin{definition}\label{def:strict-em}
Let $\mathcal K$ be a strict 2-category.  A \emph{strict 2-monad}
$\mathbb T=(\mathbb T,\eta,\mu)$ on $\mathcal K$ consists of a strict
2-functor $\mathbb T\colon\mathcal K\to\mathcal K$ and 2-natural
transformations
\begin{equation*}
  \eta\colon1_{\mathcal K}\Rightarrow\mathbb T,
  \qquad
  \mu\colon\mathbb T^2\Rightarrow\mathbb T
\end{equation*}
satisfying
\begin{equation*}
  \mu\,\mathbb T\eta=1_{\mathbb T}
  =\mu\,\eta_{\mathbb T},
  \qquad
  \mu\,\mathbb T\mu=\mu\,\mu_{\mathbb T}.
\end{equation*}
A \emph{strict $\mathbb T$-algebra} is an object $X$ equipped with a
1-cell $a\colon\mathbb T X\to X$ such that
\begin{equation*}
  a\eta_X=1_X,
  \qquad
  a\mu_X=a\mathbb T a.
\end{equation*}
A strict algebra morphism
$f\colon(X,a)\to(Y,b)$ is a 1-cell satisfying
$fa=b\mathbb T f$.  An algebra 2-cell
$\alpha\colon f\Rightarrow g$ is a 2-cell in $\mathcal K$ satisfying
\begin{equation*}
  \alpha a=b\mathbb T\alpha.
\end{equation*}
These data form the strict Eilenberg--Moore 2-category
$\Alg(\mathbb T)$.  Its forgetful 2-functor is
\begin{equation*}
  U_{\mathbb T}\colon\Alg(\mathbb T)\longrightarrow\mathcal K.
\end{equation*}
\end{definition}

\begin{definition}[Pseudoalgebras]\label{def:pseudoalgebras}
Our orientation agrees with the lax-algebra
convention in which the structural 2-cells point towards the strict
algebra equations.  A \emph{pseudo $\mathbb T$-algebra} is a 1-cell
$a\colon\mathbb TX\to X$ together with invertible 2-cells
\begin{equation}
  \iota\colon 1_X\Rightarrow a\eta_X,
  \qquad
  \chi\colon a\mathbb T a\Rightarrow a\mu_X,
  \label{eq:pseudoalgebra-cells}
\end{equation}
such that, using 2-naturality of $\eta$ and $\mu$, together with the
monad laws, to identify parallel 1-cells,
\begin{align}
  (\chi\,\mathbb T\eta_X)(a\,\mathbb T\iota)&=1_a,
  &
  (\chi\,\eta_{\mathbb T X})(\iota\,a)&=1_a,
  \label{eq:pseudoalgebra-unit-axioms}\\
  (\chi\,\mathbb T\mu_X)(a\,\mathbb T\chi)
  &=(\chi\,\mu_{\mathbb T X})(\chi\,\mathbb T^2a).
  \label{eq:pseudoalgebra-associativity-axiom}
\end{align}
The first line consists of the two unit axioms and the second line is
the associativity axiom.  The pseudoalgebra is \emph{normal} when
$\iota$ is an identity.

A pseudomorphism
$(X,a,\iota^a,\chi^a)\to(Y,b,\iota^b,\chi^b)$ is a 1-cell
$f\colon X\to Y$ equipped
with an invertible 2-cell
\begin{equation}
  \overline f\colon b\mathbb T f\Rightarrow fa
  \label{eq:pseudomorphism-cell}
\end{equation}
satisfying
\begin{align}
  (\overline f\,\eta_X)(\iota^b f)
  &=f\iota^a,
  \label{eq:pseudomorphism-unit-axiom}\\
  (\overline f\,\mu_X)(\chi^b\,\mathbb T^2f)
  &=(f\chi^a)(\overline f\,\mathbb T a)
    (b\,\mathbb T\overline f).
  \label{eq:pseudomorphism-multiplication-axiom}
\end{align}
If $(f,\overline f)$ and
$(g,\overline g)$ are pseudomorphisms, an \emph{algebra
transformation} $\alpha\colon f\Rightarrow g$ is a 2-cell satisfying
\begin{equation}
  (\alpha a)\,\overline f
  =\overline g\,(b\mathbb T\alpha).
  \label{eq:pseudoalgebra-transformation-general}
\end{equation}
A \emph{strict morphism of pseudoalgebras} is a pseudomorphism whose
comparison cell $\overline f$ is an identity; in particular, its
underlying 1-cell satisfies $b\mathbb T f=fa$ strictly.
We denote the resulting 2-category by
$\PsAlg(\mathbb T)$ and its sub-2-category of normal pseudoalgebras by
$\PsAlg_{\mathrm n}(\mathbb T)$.
\end{definition}

In \Cref{sec:pseudoalgebras}, the axioms of
\Cref{def:pseudoalgebras} are displayed in their concrete transport
form.  The general definitions and pasting conventions agree with
\citet{BlackwellKellyPower1989}.

If $F\colon\mathcal K\rightleftarrows\mathcal A\colon U$ is a
2-adjunction $F\dashv U$ with
induced 2-monad $\mathbb T$, its comparison 2-functor sends an object
$A$ to the strict action $U\varepsilon_A\colon\mathbb T UA\to UA$.
We call $U$ \emph{strictly 2-monadic} when this comparison is an
isomorphism of 2-categories over $\mathcal K$.  Thus our use of
``strictly'' refers both to the algebraic cells in
\Cref{def:strict-em} and to the comparison itself.  General background
on this formalism may be found in
\citet{Street1972,BlackwellKellyPower1989}.

\subsection{Comma categories, fibres, and strict slices}

Let $P\colon E\to B$.  The comma category
$P\!\downarrow\!B$ has objects
\begin{equation*}
  (x,u),
  \qquad x\in E,\qquad u\colon Px\longrightarrow b\text{ in }B,
\end{equation*}
and an arrow
\begin{equation*}
  (f,v)\colon(x,u)\longrightarrow(y,w)
\end{equation*}
consists of $f\colon x\to y$ in $E$ and $v\colon b\to c$ in $B$
such that
\begin{equation}
\begin{tikzcd}[column sep=large,row sep=large]
Px \arrow[r,"u"] \arrow[d,"Pf"']
  & b \arrow[d,"v"] \\
Py \arrow[r,"w"'] & c .
\end{tikzcd}
\qquad\text{that is,}\qquad
vu=wPf.
  \label{eq:comma-arrow}
\end{equation}
Its codomain projection is
\begin{equation*}
  \Cod_{P}\colon P\!\downarrow\!B\longrightarrow B,
  \qquad (x,u)\longmapsto b,\qquad(f,v)\longmapsto v.
\end{equation*}
There is a canonical functor over $B$,
\begin{equation}
  H^{P}\colon E\longrightarrow P\!\downarrow\!B,
  \qquad
  H^{P}x=(x,1_{Px}),\qquad H^{P}f=(f,Pf).
  \label{eq:HP-preliminary}
\end{equation}

For $a\in B$, the \emph{fibre} $E_a$ has as objects those $x$ with
$Px=a$ and as arrows those $f$ with $Pf=1_a$.  The strict slice
2-category $\CAT/B$ has functors into $B$ as objects, functors over
$B$ as 1-cells, and natural transformations over the identity of $B$
as 2-cells.  Thus an adjunction in $\CAT/B$ is an ordinary adjunction
whose functors and unit and counit all lie strictly over $B$.

\subsection{Cocartesian arrows and splittings}

Let $P\colon E\to B$.  An arrow $e\colon x\to y$ of $E$ is
$P$-\emph{cocartesian} if, whenever $g\colon x\to z$ in $E$ and
$v\colon Py\to Pz$ in $B$ satisfy
\begin{equation*}
Pg=v\,Pe,
\end{equation*}
there is a unique $h\colon y\to z$ such that
\begin{equation}
he=g,
  \qquad
  Ph=v.
  \label{eq:cocartesian-up}
\end{equation}
We denote the class of such arrows by $\CoCart(P)$.  An arrow $f$ is
$P$-\emph{vertical} if $Pf$ is an identity; the wide subcategory of
vertical arrows is denoted by $\Vrt(P)$.

The functor $P$ is a \emph{cofibration} if, for every $x\in E$ and
every $u\colon Px\to b$ in $B$, there is a $P$-cocartesian arrow over
$u$ with domain $x$.  We use \emph{cofibration} in Grothendieck's
sense \citep[Expos\'e~VI]{Grothendieck1971}; it is also widely called
an opfibration.

A \emph{cleavage} chooses one such arrow for every pair $(x,u)$:
\begin{equation}
\delta^{x}_{u}\colon x\longrightarrow u_{!}x,
  \qquad
  P\delta^{x}_{u}=u.
  \label{eq:cleavage}
\end{equation}
Equivalently, every designated lifting is displayed by
\begin{equation*}
\begin{tikzcd}[column sep=large,row sep=large]
x \arrow[r,"\delta^{x}_{u}"] \arrow[d,mapsto]
  & u_{!}x \arrow[d,mapsto] \\
Px \arrow[r,"u"'] & b .
\end{tikzcd}
\end{equation*}
For $u\colon a\to b$, cocartesian uniqueness defines a functor
$u_{!}\colon E_a\to E_b$ on the fibres.  On a vertical arrow
$f\colon x\to y$, the arrow $u_{!}f$ is the unique vertical arrow
satisfying
\begin{equation}
(u_{!}f)\delta^{x}_{u}=\delta^{y}_{u}f.
  \label{eq:pushforward-arrow}
\end{equation}
The cleavage is \emph{normal} if
$\delta^{x}_{1_{Px}}=1_x$ (and hence $(1_a)_!=1_{E_a}$).  It is
\emph{split} if it is normal and the following composition equations hold
whenever the expressions are defined:
\begin{align}
  (vu)_{!}&=v_{!}u_{!},
  \label{eq:split-functors}\\
  \delta^{x}_{vu}&=\delta^{u_{!}x}_{v}\,\delta^{x}_{u}.
  \label{eq:split-delta}
\end{align}
A cofibration equipped with a split cleavage is a \emph{split
cofibration}.

\begin{example}\label{ex:product-cofibration}
For categories $B$ and $K$, the first projection
$\pi_1\colon B\times K\to B$ is a split cofibration.  At
$(a,k)$, the designated lift of $u\colon a\to b$ is
\begin{equation*}
  (u,1_k)\colon(a,k)\longrightarrow(b,k).
\end{equation*}
An arrow $(u,h)\colon(a,k)\to(b,\ell)$ then has the chosen
cocartesian--vertical factorization
\begin{equation*}
\begin{tikzcd}[column sep=large]
(a,k) \arrow[r,"{(u,1_k)}"]
  & (b,k) \arrow[r,"{(1_b,h)}"]
  & (b,\ell).
\end{tikzcd}
\end{equation*}
The construction studied in \Cref{sec:squaring} shows that this
elementary factorization, and its analogue for every split
cofibration, is obtained functorially by changing 2-monads.
\end{example}

Every arrow $f\colon x\to y$ in a cloven cofibration has a unique
factorization
\begin{equation}
f=\nu_f\,\delta^{x}_{Pf},
  \qquad
  P\nu_f=1_{Py}.
  \label{eq:cofib-factorization}
\end{equation}
Indeed, this is \Cref{eq:cocartesian-up} with $v=1_{Py}$.

\begin{definition}\label{def:scofib}
The 2-category $\SCoFib$ is defined as follows.
\begin{enumerate}[label=\textup{(\roman*)},leftmargin=2.2em]
\item Its objects are split cofibrations with their designated split
      cleavages.
\item A 1-cell $(T,S)\colon P\to Q$ belongs to $\SCoFib$ if it
      preserves the designated liftings strictly:
      \begin{equation}
        T\delta^{x}_{u}=\delta^{Tx}_{Su}
        \label{eq:cleavage-preservation}
\end{equation}
      for every $x\in E$ and $u\colon Px\to b$.  In particular,
      $T(u_{!}x)=(Su)_{!}Tx$ as an equality of objects.
\item Between such 1-cells, all 2-cells of $\CAT^{\mathbf 2}$ are
      admitted.
\end{enumerate}
The evident forgetful 2-functor
$U\colon\SCoFib\to\CAT^{\mathbf 2}$ is locally fully faithful.
\end{definition}

The condition in \Cref{eq:cleavage-preservation} is stable under
identities and composition, so this does define a 2-category.  It also
implies that $T$ preserves the factorization in
\Cref{eq:cofib-factorization}:
the vertical factor of $Tf$ is $T\nu_f$.

\subsection{The action associated with a cleavage}

The following elementary construction will be used throughout.

\begin{lemma}\label{lem:action-from-cleavage}
Let $P\colon E\to B$ be a cloven cofibration.  There is a unique
functor
\begin{equation*}
L^{P}\colon P\!\downarrow\!B\longrightarrow E
\end{equation*}
with the following properties:
\begin{align*}
  L^{P}(x,u)&=u_{!}x,\\
  P L^{P}&=\Cod_{P},\\
  L^{P}(f,v)\,\delta^{x}_{u}&=\delta^{y}_{w}f
\end{align*}
for every comma arrow
$(f,v)\colon(x,u)\to(y,w)$, so that $vu=wPf$.
If the cleavage is normal, then
\begin{equation}
L^{P}H^{P}=1_E,
  \label{eq:LH}
\end{equation}
where $H^{P}x=(x,1_{Px})$ and
$H^{P}f=(f,Pf)$.
\end{lemma}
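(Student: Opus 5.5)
The plan is to define $L^{P}$ on objects by the required formula $L^{P}(x,u)=u_!x$, and on arrows by cocartesian uniqueness. Given a comma arrow $(f,v)\colon(x,u)\to(y,w)$, with $vu=wPf$, consider $g=\delta^{y}_{w}f\colon x\to w_!y$. We have
\begin{equation*}
Pg=w\,Pf=vu=v\,P\delta^{x}_{u}.
\end{equation*}
Since $\delta^{x}_{u}$ is cocartesian, \Cref{eq:cocartesian-up} gives a unique $h\colon u_!x\to w_!y$ with $h\delta^{x}_{u}=\delta^{y}_{w}f$ and $Ph=v$. Set $L^{P}(f,v)=h$. The two displayed conditions then hold by construction: $PL^{P}=\Cod_P$ on objects because $P(u_!x)=b$, and on arrows because $Ph=v$. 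Uniqueness of $L^{P}$ is also immediate. Any functor with the stated properties must agree with ours on objects, and on arrows its value is some $h'$ with $h'\delta^{x}_{u}=\delta^{y}_{w}f$ and $Ph'=v$, so $h'=h$ by cocartesian uniqueness.

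Functoriality is the only step needing care, and both parts reduce to the uniqueness clause of \Cref{eq:cocartesian-up} applied to $\delta^{x}_{u}$.

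\textbf{Identities.} The identity $1_{u_!x}$ satisfies $1\cdot\delta^{x}_{u}=\delta^{x}_{u}1_x$ and $P1=1_b$. Hence $L^{P}(1_x,1_b)=1$.

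\textbf{Composites.} Take a further comma arrow $(f',v')\colon(y,w)\to(z,t)$. The composite $L^{P}(f',v')L^{P}(f,v)$ lies over $v'v$, and it satisfies
\begin{equation*}
L^{P}(f',v')\,L^{P}(f,v)\,\delta^{x}_{u}
=L^{P}(f',v')\,\delta^{y}_{w}f
=\delta^{z}_{t}f'f.
\end{equation*}
These are exactly the conditions characterizing $L^{P}(f'f,v'v)$, so the two arrows coincide.

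For the normal case, first compare on objects: $L^{P}H^{P}x=(1_{Px})_!x$, which is the codomain of $\delta^{x}_{1_{Px}}=1_x$, namely $x$. On arrows, $L^{P}H^{P}f=L^{P}(f,Pf)$ is the unique arrow $h$ with $h\,\delta^{x}_{1_{Px}}=\delta^{y}_{1_{Py}}f$ and $Ph=Pf$. By normality this condition reads $h\cdot1_x=1_y\cdot f$, so $h=f$, and therefore $L^{P}H^{P}=1_E$.

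No step is a genuine obstacle. The one point requiring attention is the bookkeeping of the comma condition $vu=wPf$, which is exactly what makes the cocartesian property applicable in the definition of $L^{P}(f,v)$.
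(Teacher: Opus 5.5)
Your proposal is correct and follows the paper's own proof: you define $L^{P}(f,v)$ by the cocartesian property of $\delta^{x}_{u}$ using $P(\delta^{y}_{w}f)=wPf=vu$, and you obtain functoriality, uniqueness and $L^{P}H^{P}=1_E$ under normality from cocartesian uniqueness. Your explicit treatment of the uniqueness of $L^{P}$, and of the object-level check $(1_{Px})_!x=x$, only spells out steps that the paper leaves brief.
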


\begin{proof}
The required arrow $L^{P}(f,v)$ is supplied uniquely by the
cocartesianness of $\delta^{x}_{u}$, because
\begin{equation*}
P(\delta^{y}_{w}f)=wPf=vu.
\end{equation*}
Equivalently, it is the unique dashed arrow making
\begin{equation*}
\begin{tikzcd}[column sep=large,row sep=large]
x \arrow[r,"\delta^x_u"] \arrow[d,"f"']
  & u_!x \arrow[d,dashed,"{L^P(f,v)}"] \\
y \arrow[r,"\delta^y_w"']
  & w_!y
\end{tikzcd}
\qquad\text{commute, with}\qquad PL^P(f,v)=v.
\end{equation*}
For an identity comma arrow, both the identity of $u_{!}x$ and the
prescribed image have the same base and agree after
$\delta^{x}_{u}$; hence they agree.  The same argument, now after
precomposing with $\delta^{x}_{u}$, proves preservation of composite
comma arrows.  Thus $L^{P}$ is a functor.  If the cleavage is normal
and $u=1_{Px}$, its identity law says that the displayed formula
reduces to the original arrow $f$.  This proves \Cref{eq:LH}.
\end{proof}

\begin{corollary}\label{cor:split-adjunction}
For every normally cloven cofibration $P$, the functor $L^{P}$ of
\Cref{lem:action-from-cleavage} is left adjoint to $H^{P}$ in the
strict slice 2-category $\CAT/B$.  The counit is the identity, and the
unit at $(x,u)$ is
\begin{equation}
(\delta^{x}_{u},1_b)\colon(x,u)\longrightarrow
  H^{P}(u_{!}x)
  \label{eq:unit-LH}.
\end{equation}
Thus, in particular, there is an adjunction in $\CAT^{\mathbf 2}$
\begin{equation}
(L^{P},1_B)\dashv(H^{P},1_B)
  \label{eq:LP-adjunction}
\end{equation}
over $1_B$.
\end{corollary}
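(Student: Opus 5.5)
I will prove \Cref{cor:split-adjunction} by exhibiting the unit and counit explicitly and checking the two triangle identities, all inside the strict slice $\CAT/B$. By \Cref{lem:action-from-cleavage}, $PL^{P}=\Cod_{P}$, and $\Cod_{P}H^{P}=P$ by definition, so both functors lie over $B$. Normality gives $L^{P}H^{P}=1_E$ by \Cref{eq:LH}, so I take the counit $\varepsilon=1\colon L^{P}H^{P}\Rightarrow 1_E$. This is trivially a 2-cell over $B$, since its image under $P$ is the identity.

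For the unit, I define $\eta_{(x,u)}=(\delta^{x}_{u},1_b)\colon(x,u)\to(u_!x,1_b)=H^{P}L^{P}(x,u)$. The comma condition \Cref{eq:comma-arrow} reads $1_b\,u=1_b\,P\delta^x_u$, which holds by \Cref{eq:cleavage}. Its image under $\Cod_P$ is $1_b$, so $\eta$ lies over $B$.

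Naturality of $\eta$ at a comma arrow $(f,v)\colon(x,u)\to(y,w)$ requires
\[
(L^P(f,v),PL^P(f,v))\,(\delta^x_u,1_b)=(\delta^y_w,1_c)\,(f,v).
\]
The first components agree by the defining equation $L^{P}(f,v)\,\delta^{x}_{u}=\delta^{y}_{w}f$ of \Cref{lem:action-from-cleavage}. The second components are $v$ on both sides, because $PL^P(f,v)=v$.

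For the triangle identities, first consider $\varepsilon L^P\circ L^P\eta=1_{L^P}$. Since $\varepsilon$ is the identity, I must show $L^P(\delta^x_u,1_b)=1_{u_!x}$. Applying the characterization in \Cref{lem:action-from-cleavage} to the comma arrow $(\delta^x_u,1_b)\colon(x,u)\to(u_!x,1_b)$, the arrow $L^P(\delta^x_u,1_b)$ is the unique $h$ over $1_b$ with $h\,\delta^x_u=\delta^{u_!x}_{1_b}\delta^x_u$. By normality $\delta^{u_!x}_{1_b}=1$, so $h=1$ satisfies this, and uniqueness gives $h=1$.

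For the other triangle, $H^P\varepsilon\circ\eta H^P=1_{H^P}$ reduces to $\eta_{(x,1_{Px})}=(\delta^x_{1_{Px}},1_{Px})=(1_x,1_{Px})=1_{H^Px}$, again by normality.

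Finally, since every functor and transformation involved lies over $1_B$, applying $(-,1_B)$ turns this slice adjunction into the adjunction \Cref{eq:LP-adjunction} in $\CAT^{\mathbf 2}$. The condition \Cref{eq:arrow-2-cell} for $(\eta,1)$ and $(\varepsilon,1)$ is exactly the statement that they lie over $B$.

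The only point needing care is the first triangle identity. It is where normality of the cleavage is used a second time, beyond \Cref{eq:LH}, and it relies on the uniqueness clause of cocartesianness rather than merely on existence of the lift.
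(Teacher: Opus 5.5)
Your proposal is correct and follows the paper's proof essentially step for step. Both arguments get naturality of the unit from the defining equation of $L^{P}(f,v)$ and the identity counit from \Cref{eq:LH}. Both get the triangle at $H^{P}x$ from $\delta^{x}_{1_{Px}}=1_x$, and the triangle at $(x,u)$ from cocartesian uniqueness applied to $L^{P}(\delta^{x}_{u},1_b)$, with normality giving $\delta^{u_!x}_{1_b}=1$. Both then pass to the adjunction in $\CAT^{\mathbf 2}$ by observing that everything lies over $1_B$.
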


\begin{proof}
For a comma arrow $(f,v)\colon(x,u)\to(y,w)$, the two composites in the
naturality square for \Cref{eq:unit-LH} are
\begin{equation*}
  \bigl(L^{P}(f,v)\delta^{x}_{u},v\bigr)
  \quad\text{and}\quad
  \bigl(\delta^{y}_{w}f,v\bigr),
\end{equation*}
and these are equal by the defining equation for $L^{P}(f,v)$.
Thus the unit is natural by the commutative square
\begin{equation*}
\begin{tikzcd}[column sep=huge,row sep=large]
(x,u) \arrow[r,"{(f,v)}"]
  \arrow[d,"{(\delta^x_u,1_b)}"']
  & (y,w) \arrow[d,"{(\delta^y_w,1_c)}"] \\
H^P(u_!x) \arrow[r,"{H^P L^P(f,v)}"']
  & H^P(w_!y).
\end{tikzcd}
\end{equation*}
\Cref{eq:LH} supplies the identity counit.  The triangle at
$H^{P}x$ is the identity because $\delta^{x}_{1_{Px}}=1_x$.  For the
other triangle, $L^{P}(\delta^{x}_{u},1_b)$ is the unique vertical
arrow whose composite with $\delta^{x}_{u}$ is $\delta^{x}_{u}$; hence
it is $1_{u_{!}x}$.  Thus both triangle equations hold strictly in
$\CAT$, and all displayed transformations lie over $1_B$, so
\Cref{eq:LP-adjunction} is also an adjunction in the arrow
2-category.
\end{proof}

\section{The global comma 2-monad}
\label{sec:monad}

\subsection{The comma 2-functor}

With the notation of \Cref{eq:comma-arrow}, define
\begin{equation*}
\C P:=\Cod_{P}\colon
  P\!\downarrow\!B\longrightarrow B,
  \qquad (x,u)\longmapsto b.
\end{equation*}
This is a split cofibration.  Its designated lift of
$v\colon b\to c$ at $(x,u)$ is
\begin{equation}
(1_x,v)\colon(x,u)\longrightarrow(x,vu).
  \label{eq:free-cleavage}
\end{equation}
Indeed, suppose that $(f,s)\colon(x,u)\to(y,w)$ lies over $s=tv$.
Then $(f,t)\colon(x,vu)\to(y,w)$ is the unique arrow over $t$ whose
composite with $(1_x,v)$ is $(f,s)$, since $tvu=su=wPf$.
Moreover,
\begin{equation*}
  (1_x,1_b)=1_{(x,u)},
  \qquad
  (1_x,w)(1_x,v)=(1_x,wv),
\end{equation*}
so the cleavage is split.
Given a 1-cell $(T,S)\colon P\to Q$ in $\CAT^{\mathbf 2}$, define
\begin{equation*}
\overline T\colon P\!\downarrow\!B
  \longrightarrow Q\!\downarrow\!C
\end{equation*}
by
\begin{align*}
  \overline T(x,u)&=(Tx,Su),\\
  \overline T(f,v)&=(Tf,Sv).
\end{align*}
For a 2-cell
$(\beta,\alpha)\colon(T,S)\Rightarrow(T',S')$, define
\begin{equation}
\overline\beta_{(x,u)}=(\beta_x,\alpha_b)
  \colon(Tx,Su)\longrightarrow(T'x,S'u).
  \label{eq:bar-beta}
\end{equation}
The component in \Cref{eq:bar-beta} satisfies the comma condition by
naturality of $\alpha$ and \Cref{eq:arrow-2-cell}:
\begin{equation*}
\alpha_b Su=S'u\,\alpha_{Px}=S'u\,Q\beta_x.
\end{equation*}
If $(f,v)\colon(x,u)\to(y,w)$ is a comma arrow, then
\begin{equation*}
  (Sv)(Su)=S(vu)=S(wPf)=(Sw)Q(Tf),
\end{equation*}
so $(Tf,Sv)$ is again a comma arrow.  Moreover, naturality of
$\beta$ and $\alpha$ gives
\begin{equation*}
  T'f\,\beta_x=\beta_y\,Tf,
  \qquad
  S'v\,\alpha_b=\alpha_c\,Sv;
\end{equation*}
hence the components in \Cref{eq:bar-beta} form a natural
transformation.  Identities and both compositions are preserved
componentwise.
These assignments define a strict 2-endofunctor
\begin{equation*}
\C\colon\CAT^{\mathbf 2}\longrightarrow\CAT^{\mathbf 2},
\end{equation*}
which fixes the codomain component: $\Cod\C=\Cod$.  In fact,
$(\overline T,S)$ preserves the canonical split cleavages defined in
\Cref{eq:free-cleavage}, so the same assignment gives the free
2-functor
\begin{equation}
\C^{\!*}\colon\CAT^{\mathbf 2}\longrightarrow\SCoFib.
  \label{eq:free-2functor}
\end{equation}
\subsection{Unit, multiplication, and the free adjunction}

Define 2-natural transformations
\begin{equation*}
\eta\colon1\Rightarrow\C,
  \qquad
  \mu\colon\C^2\Rightarrow\C
\end{equation*}
as follows.  At $P\colon E\to B$, their base components are
$1_B$.  Their domain components are
\begin{equation*}
H^{P}\colon E\longrightarrow P\!\downarrow\!B,
  \qquad
  M^{P}\colon \C P\!\downarrow\!B
      \longrightarrow P\!\downarrow\!B,
\end{equation*}
where
\begin{align}
  H^{P}x&=(x,1_{Px}),
  & H^{P}f&=(f,Pf),
  \label{eq:H-definition}\\
  M^{P}((x,u),v)&=(x,vu),
  & M^{P}((f,r),s)&=(f,s).
  \label{eq:M-definition}
\end{align}
In \Cref{eq:M-definition}, an object of $\C^2P$ is written
$((x,u),v)$ with $u\colon Px\to b$ and $v\colon b\to c$; an arrow is
written $((f,r),s)$.
If this arrow has source $((x,u),v)$ and target $((y,w),z)$, its comma
equations are
\begin{equation*}
  ru=wPf,
  \qquad
  sv=zr.
\end{equation*}
Consequently,
\begin{equation*}
  s(vu)=(sv)u=(zr)u=z(ru)=z(wPf)=(zw)Pf,
\end{equation*}
so $(f,s)$ is indeed an arrow from $(x,vu)$ to $(y,zw)$.  Since
$M^P$ retains the total component and the outer base component, it
preserves identities and composites.

\begin{proposition}\label{prop:comma-monad}
The triple $\C=(\C,\eta,\mu)$ is a strict 2-monad on
$\CAT^{\mathbf 2}$.
\end{proposition}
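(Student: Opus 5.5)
We verify directly that $\eta$ and $\mu$ are 2-natural transformations with components in $\CAT^{\mathbf 2}$, and then check the three monad equations componentwise.

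\textbf{Components are 1-cells of $\CAT^{\mathbf 2}$.} The pairs $(H^P,1_B)\colon P\to\C P$ and $(M^P,1_B)\colon\C^2P\to\C P$ must be strictly commutative squares. For $H^P$ this is $\Cod_P H^P=P$. For $M^P$ it is $\Cod_P M^P=\Cod_{\C P}$, because $M^P$ keeps the outer base arrow $s$, and the codomain of $vu$ is the codomain of $v$.

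\textbf{2-naturality.} For a 1-cell $(T,S)\colon P\to Q$ we need
\begin{equation*}
\overline T H^P=H^Q T,\qquad \overline T M^P=M^Q\,\overline{\overline T}.
\end{equation*}
On objects these read $(Tx,S1_{Px})=(Tx,1_{QTx})$, using $QT=SP$, and $(Tx,S(vu))=(Tx,Sv\,Su)$. On arrows they read $(Tf,SPf)=(Tf,QTf)$ and $(Tf,Ss)$ on both sides. The base components are identities, so naturality there is trivial.

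For a 2-cell $(\beta,\alpha)$ we must compare whiskerings. Evaluated at $x$, $\overline\beta H^P$ is $(\beta_x,\alpha_{Px})$, while $H^Q\beta$ is $(\beta_x,Q\beta_x)$. These agree by \Cref{eq:arrow-2-cell}. At $((x,u),v)$, the whiskering $\overline\beta M^P$ is $(\beta_x,\alpha_c)$. For $M^Q\overline{\overline\beta}$, note that $\overline{\overline\beta}$ has component $((\beta_x,\alpha_b),\alpha_c)$, and $M^Q$ sends this to $(\beta_x,\alpha_c)$. So the two agree.

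\textbf{Monad laws.} All base components are $1_B$, so only the domain components need checking; equality of functors is checked on objects and arrows.

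For the first unit law, $M^P\,\overline{H^P}(x,u)=M^P((x,1_{Px}),u)=(x,u)$, and on arrows $(f,v)\mapsto((f,Pf),v)\mapsto(f,v)$.

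For the second unit law, $M^P H^{\C P}(x,u)=M^P((x,u),1_b)=(x,u)$, and on arrows $(f,v)\mapsto((f,v),v)\mapsto(f,v)$.

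For associativity, write an object of $\C^3P$ as $(((x,u),v),t)$. Then $M^P\overline{M^P}$ gives
\begin{equation*}
(((x,u),v),t)\mapsto((x,vu),t)\mapsto(x,t(vu)),
\end{equation*}
and $M^PM^{\C P}$ gives
\begin{equation*}
(((x,u),v),t)\mapsto((x,u),tv)\mapsto(x,(tv)u).
\end{equation*}
These are equal by associativity in $B$. On arrows, both composites send $(((f,r),s),q)$ to $(f,q)$.

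The only point needing care is the 2-cell part of 2-naturality for $\eta$. There, compatibility of the base component $\alpha$ with the total component $\beta$ is exactly \Cref{eq:arrow-2-cell}. The remaining steps are bookkeeping of nested comma pairs.
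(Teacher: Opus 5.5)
Your proof is correct and follows the paper's argument essentially step for step. Both check 2-naturality componentwise on 1-cells and 2-cells, with the $\eta$ case at 2-cells reducing to $Q\beta=\alpha P$, and then verify the two unit laws and associativity on domain components, the base components being identities. Your only addition is the explicit check that $(H^P,1_B)$ and $(M^P,1_B)$ are commuting squares, which the paper leaves implicit.
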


\begin{proof}
We first spell out 2-naturality.  For a square
$(T,S)\colon P\to Q$, direct evaluation gives
\begin{equation*}
  \overline T H^{P}=H^{Q}T,
  \qquad
  \overline T M^{P}=M^{Q}\overline{\overline T},
\end{equation*}
where $\overline{\overline T}$ is the domain component of
$\C^{2}(T,S)$.  On a 2-cell $(\beta,\alpha)$ the corresponding
equalities are
\begin{equation*}
  \overline\beta H^{P}=H^{Q}\beta,
  \qquad
  \overline\beta M^{P}=M^{Q}\overline{\overline\beta}.
\end{equation*}
For example, the first equality has component
$(\beta_x,\alpha_{Px})=(\beta_x,Q\beta_x)$, and the remaining
equalities follow from the same componentwise calculation.  Hence
$\eta$ and $\mu$ are strict 2-natural transformations.

The domain components of the two unit equations are
\begin{equation}
  M^{P}H^{\C P}=1_{P\downarrow B},
  \qquad
  M^{P}\overline{H^{P}}=1_{P\downarrow B}.
  \label{eq:comma-unit-laws}
\end{equation}
At $(x,u)$, the first composite passes through
$((x,u),1_b)$ and the second through $((x,1_{Px}),u)$; both return
$(x,u)$.  On a comma arrow $(f,v)$, each composite returns $(f,v)$.

The domain component of the associativity equation is
\begin{equation}
  M^{P}M^{\C P}=M^{P}\overline{M^{P}}
  \colon\Dom\C^{3}P\longrightarrow P\!\downarrow\!B.
  \label{eq:comma-associativity}
\end{equation}
Equivalently, the following square commutes strictly:
\begin{equation*}
\begin{tikzcd}[column sep=huge,row sep=large]
\Dom\C^3P \arrow[r,"M^{\C P}"]
  \arrow[d,"{\overline{M^P}}"']
& \Dom\C^2P \arrow[d,"M^P"] \\
\Dom\C^2P \arrow[r,"M^P"']
& \Dom\C P .
\end{tikzcd}
\end{equation*}
On an object $(((x,u),v),w)$, its two sides give
$\bigl(x,w(vu)\bigr)$ and $\bigl(x,(wv)u\bigr)$, respectively.
Thus \Cref{eq:comma-associativity} reduces to
\begin{equation*}
w(vu)=(wv)u.
\end{equation*}
On an arrow $(((f,r),s),t)$, both sides give $(f,t)$; hence they retain
the total component and the outermost base component.  This proves
equality on arrows as well.
The base components of all three laws are identities.  Hence all monad
laws hold as equalities of 2-natural transformations.
\end{proof}

Let
\begin{equation*}
U\colon\SCoFib\longrightarrow\CAT^{\mathbf 2}
\end{equation*}
denote the forgetful 2-functor of \Cref{def:scofib}.

\begin{theorem}[Free split cofibration]\label{thm:free-adjunction}
The 2-functor $\C^{\!*}$ in \Cref{eq:free-2functor} is left
2-adjoint to $U$.  Its unit is $\eta$.  At a split cofibration $P$,
the counit is
\begin{equation}
\varepsilon_P=(L^{P},1_B)\colon\C P\longrightarrow P.
  \label{eq:counit}
\end{equation}
The induced 2-monad is the comma 2-monad of
\Cref{prop:comma-monad}.
\end{theorem}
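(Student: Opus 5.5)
I will verify the 2-adjunction directly through the unit--counit description: I will show that $\eta$ and $\varepsilon$ are 2-natural in the appropriate 2-categories and that the two triangle identities hold strictly.

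\emph{The counit is a 1-cell of $\SCoFib$.} For a split cofibration $P$, the square $(L^P,1_B)\colon\Cod_P\to P$ commutes because $PL^P=\Cod_P$ (\Cref{lem:action-from-cleavage}). It must preserve designated lifts. The free lift at $(x,u)$ along $v$ is $(1_x,v)$ by \Cref{eq:free-cleavage}. Its image $L^P(1_x,v)$ is the unique arrow over $v$ satisfying $L^P(1_x,v)\,\delta^x_u=\delta^x_{vu}$. Splitness \Cref{eq:split-delta} shows that $\delta^{u_!x}_v$ also has this property, so $L^P(1_x,v)=\delta^{u_!x}_v$, as required by \Cref{eq:cleavage-preservation}.

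\emph{2-naturality of $\varepsilon$.} For a 1-cell $(T,S)\colon P\to Q$ of $\SCoFib$, I need $TL^P=L^Q\overline T$. On objects this reads $T(u_!x)=(Su)_!Tx$, which is the consequence of \Cref{eq:cleavage-preservation} recorded in \Cref{def:scofib}. On arrows, $TL^P(f,v)$ lies over $Sv$ and satisfies $TL^P(f,v)\,\delta^{Tx}_{Su}=T\delta^y_w\,Tf=\delta^{Ty}_{Sw}Tf$. Uniqueness in \Cref{lem:action-from-cleavage} then gives equality with $L^Q(Tf,Sv)$.

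For a 2-cell $(\beta,\alpha)$, I need $\beta L^P=L^Q\overline\beta$ componentwise. At $(x,u)$ both sides are arrows $T(u_!x)\to T'(u_!x)$ over $\alpha_b$. Precomposed with $\delta^{Tx}_{Su}=T\delta^x_u$, both become $\delta^{T'x}_{S'u}\beta_x$: one by naturality of $\beta$ together with $T'\delta^x_u=\delta^{T'x}_{S'u}$, the other by the definition of $L^Q$. I expect this 2-cell check to be the main obstacle, because it is the only place where the 2-dimensional part of cocartesian uniqueness is used and the bookkeeping of bases is delicate. It succeeds because $T\delta^x_u$ is cocartesian.

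\emph{Naturality of $\eta$ and the triangle identities.} Naturality of $\eta$ as a transformation $1\Rightarrow U\C^{\!*}$ was already established in the proof of \Cref{prop:comma-monad}. The first triangle identity, $U\varepsilon_P\circ\eta_{UP}=1$, is $L^PH^P=1_E$, which is \Cref{eq:LH}. The second requires $\varepsilon_{\C P}\circ\C\eta_P=1$ for the free split cofibration $\C P$.

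To check the second identity, I compute $L^{\C P}$ from the free cleavage. At $((x,u),v)$ its value is $v_!(x,u)=(x,vu)$. On arrows, uniqueness gives $((f,r),s)\mapsto(f,s)$. Hence $L^{\C P}=M^P$, and the second identity becomes the unit law $M^P\overline{H^P}=1$ of \Cref{eq:comma-unit-laws}. All base components are identities.

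\emph{The induced 2-monad.} This 2-monad has functor $U\C^{\!*}=\C$, unit $\eta$, and multiplication $U\varepsilon_{\C^{\!*}}$. By the computation $L^{\C P}=M^P$ just made, this multiplication is exactly $\mu$, so the induced 2-monad is the comma 2-monad of \Cref{prop:comma-monad}.
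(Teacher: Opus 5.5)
Your proof is correct, but it takes a different route from the paper's.

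\textbf{The paper's route.} It works through the hom-category isomorphism $\SCoFib(\C P,Q)\cong\CAT^{\mathbf 2}(P,UQ)$ given by precomposition with $(H^P,1_B)$. It builds the extension $\widetilde T=L^Q\overline T$ and checks that it preserves the free cleavage and restricts to $T$. It then shows that any cleavage-preserving extension is forced, using the factorization $(f,v)=(f,1_c)(1_x,v)$ and cocartesian uniqueness. It does the same for 2-cells, and finally verifies strict 2-naturality of the isomorphism in both variables. The counit and the identity $L^{\C P}=M^P$ are read off at the end.

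\textbf{Your route.} You verify the unit--counit data directly: $\varepsilon_P$ is a 1-cell of $\SCoFib$, $\varepsilon$ is 2-natural, and the two triangle identities hold. The second triangle identity reduces to the monad unit law $M^P\overline{H^P}=1$ once $L^{\C P}=M^P$ is computed.

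\textbf{What each buys.} Your version is more economical. Uniqueness of extensions and two-variable naturality of the hom-isomorphism come for free from the formal theory of 2-adjunctions. Your 2-naturality checks for $\varepsilon$ are exactly the arguments the paper gives later in \Cref{prop:one-cells} (the converse direction) and \Cref{prop:two-cells}, so those results become immediate consequences of the counit's 2-naturality. The paper's version states the universal property explicitly: every square $P\to UQ$ has a unique cleavage-preserving extension along $H^P$, namely $\widetilde T(x,u)=(Su)_!Tx$. This is the form in which freeness is classically stated, and it shows concretely why the extension is forced. In your argument that extension appears only implicitly as $\varepsilon_Q\circ\C^{\!*}(T,S)=(L^Q\overline T,S)$.

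Finally, the 2-cell check you expected to be the main obstacle is a single cocartesian cancellation against $\delta^{Tx}_{Su}$, as in \Cref{prop:two-cells}.
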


\begin{proof}
Let $P\colon E\to B$ be arbitrary and let $Q\colon F\to C$ be a
split cofibration.  Precomposition with $(H^{P},1_B)$ gives a functor
\begin{equation}
\SCoFib(\C P,Q)\longrightarrow
  \CAT^{\mathbf 2}(P,UQ).
  \label{eq:hom-map}
\end{equation}
We construct a strict inverse.  Given
$(T,S)\colon P\to Q$, put
\begin{equation}
\widetilde T=L^{Q}\overline T,
  \qquad
  \widetilde T(x,u)=(Su)_{!}Tx.
  \label{eq:T-tilde}
\end{equation}
By \Cref{lem:action-from-cleavage}, the image of a comma arrow
$(f,v)\colon(x,u)\to(y,w)$ is the unique arrow over $Sv$ satisfying
\begin{equation}
\widetilde T(f,v)\,\delta^{Tx}_{Su}
  =\delta^{Ty}_{Sw}Tf.
  \label{eq:T-tilde-arrow}
\end{equation}
This gives a square $(\widetilde T,S)\colon\C P\to Q$.
The splitting laws for $Q$ show from \Cref{eq:T-tilde-arrow} that it
preserves the canonical cleavage \Cref{eq:free-cleavage}; explicitly,
\begin{equation*}
\widetilde T(1_x,v)\delta^{Tx}_{Su}
  =\delta^{Tx}_{S(vu)}
  =\delta^{(Su)_{!}Tx}_{Sv}\delta^{Tx}_{Su},
\end{equation*}
and hence, by cocartesian uniqueness,
\begin{equation*}
\widetilde T(1_x,v)=\delta^{(Su)_{!}Tx}_{Sv}.
\end{equation*}
Moreover $\widetilde T H^{P}=T$ by the identity clauses of the
splitting.
Thus the extension fills the triangle
\begin{equation*}
\begin{tikzcd}[column sep=huge,row sep=large]
P \arrow[r,"{(H^P,1_B)}"]
  \arrow[dr,"{(T,S)}"']
& \C P \arrow[d,"{(\widetilde T,S)}"] \\
& Q .
\end{tikzcd}
\end{equation*}

Conversely, a cleavage-preserving extension of $(T,S)$ must send
$(x,u)$ to $(Su)_{!}Tx$ and $(1_x,u)$ to
$\delta^{Tx}_{Su}$.  Every comma arrow factors as
\begin{equation*}
(f,v)=(f,1_c)\,(1_x,v)
\end{equation*}
with the evident intermediate object, and its vertical factor is
forced by cocartesian uniqueness.  More explicitly, one has
\begin{equation*}
  (f,1_c)(1_x,vu)=(1_y,w)H^{P}f,
\end{equation*}
so any extension $K$ restricting to $(T,S)$ must satisfy
\begin{equation*}
  K(f,1_c)\,\delta^{Tx}_{S(vu)}=\delta^{Ty}_{Sw}Tf.
\end{equation*}
The first arrow is therefore uniquely determined, and so is
$K(f,v)=K(f,1_c)K(1_x,v)$.  Hence the extension must be the functor in
\Cref{eq:T-tilde}.

For a 2-cell $(\beta,\alpha)$ between two squares $P\to Q$, its
extension has component at $(x,u)$
\begin{equation}
L^{Q}(\beta_x,\alpha_b).
  \label{eq:extended-2cell}
\end{equation}
The comma equation was checked after \Cref{eq:bar-beta}; naturality
follows from functoriality of $L^{Q}$.  Restriction and extension are
inverse also on 2-cells.  More explicitly,
\Cref{eq:extended-2cell} evaluated at $H^{P}x$ gives
$L^{Q}H^{Q}(\beta_x)=\beta_x$, since
$Q\beta_x=\alpha_{Px}$.  In the other direction, naturality with the
canonical lift $(1_x,u)$ forces the component at $(x,u)$ to be
\Cref{eq:extended-2cell}.  Thus \Cref{eq:hom-map} is an isomorphism of
categories.  Its formulas commute strictly with precomposition in
$P$.  For postcomposition, if $(R,V)\colon Q\to Q'$ preserves the
designated cleavages, then both $R L^{Q}\overline T$ and
$L^{Q'}\overline{RT}$ are the unique cleavage-preserving extension of
$(RT,VS)$, and hence they are equal as functors.  The same argument
applies to 2-cells: the arrows
$R L^Q(\beta_x,\alpha_b)$ and
$L^{Q'}(R\beta_x,V\alpha_b)$ lie over $V\alpha_b$, and their
composites with the chosen lift of $VS u$ both equal the image of
$\beta_x$.  Cocartesian uniqueness makes them equal.  Thus the
hom-category isomorphism is strict
2-natural in both variables, proving the 2-adjunction.

At $P=UQ$ and $(T,S)=1_Q$, the formula in \Cref{eq:T-tilde} is $L^{Q}$,
giving the stated counit.  Applying $U$ to the free construction and
to the counit gives precisely the formulas for $\C$ and $M^{P}$:
\begin{equation*}
  L^{\C P}((x,u),v)=(x,vu),
  \qquad
  L^{\C P}((f,r),s)=(f,s).
\end{equation*}
Thus $L^{\C P}=M^P$ strictly.
\end{proof}

\subsection{Lax idempotence}

We use the convention that a 2-monad $\mathbb T$ is \emph{lax
idempotent}, or of Kock--Z\"oberlein type, if
$\mu_X\dashv\eta_{\mathbb T X}$ with identity counit, naturally in
$X$; see \citet{Zoberlein1976,Kock1995}.
Equivalently, $\mu\dashv\eta_{\mathbb T}$ in the 2-category of strict
2-endofunctors, 2-natural transformations, and modifications.  Thus
the unit of this adjunction must itself be a modification, not merely a
family of pointwise natural transformations.

\begin{proposition}\label{prop:kz}
The comma 2-monad $\C$ is lax idempotent.  More precisely, for every
$P\colon E\to B$ there is an adjunction in $\CAT^{\mathbf 2}$
\begin{equation}
(M^{P},1_B)\dashv(H^{\C P},1_B)
  \label{eq:kz-adjunction}
\end{equation}
with identity counit.
\end{proposition}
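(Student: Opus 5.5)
The shortest route is to read \eqref{eq:kz-adjunction} as a special case of \Cref{cor:split-adjunction}. By \Cref{thm:free-adjunction}, $\C P=\Cod_P$ is a split cofibration, hence a normally cloven one, with canonical cleavage \eqref{eq:free-cleavage}. Applied to $\C P\colon P\!\downarrow\!B\to B$, \Cref{cor:split-adjunction} gives an adjunction $(L^{\C P},1_B)\dashv(H^{\C P},1_B)$ in $\CAT^{\mathbf 2}$ over $1_B$, with identity counit. The end of the proof of \Cref{thm:free-adjunction} records that $L^{\C P}=M^P$ strictly, so this is exactly \eqref{eq:kz-adjunction}. For the reader I will also write the unit explicitly: by \eqref{eq:unit-LH} and \eqref{eq:free-cleavage}, its component at $((x,u),v)$ is the comma arrow
\begin{equation*}
  \bigl((1_x,v),1_c\bigr)\colon((x,u),v)\longrightarrow((x,vu),1_c).
\end{equation*}
The identity counit is the first equation in \eqref{eq:comma-unit-laws}, namely $M^PH^{\C P}=1$.

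It remains to see that these data are natural in $P$ in the stronger sense demanded by the convention: the unit $\theta$ of $\mu\dashv\eta_{\C}$ must be a modification $1_{\C^2}\Rrightarrow\eta_{\C}\mu$. The counit is an identity, and the triangle identities were verified componentwise in \Cref{cor:split-adjunction}. So the only additional fact to prove is the modification axiom for $\theta$.

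This modification check is the main obstacle, though I expect it to be a direct calculation. For a 1-cell $(T,S)\colon P\to Q$, I must show
\begin{equation*}
  \C^2(T,S)\,\theta_P=\theta_Q\,\C^2(T,S).
\end{equation*}
At $((x,u),v)$, the left side is $\overline{\overline T}$ applied to $((1_x,v),1_c)$, which gives $((1_{Tx},Sv),1_{Sc})$. This is precisely the component of $\theta_Q$ at $((Tx,Su),Sv)$. The base components are identities on both sides, so the axiom holds.

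Strict 2-naturality also has a 2-cell part. For a 2-cell $(\beta,\alpha)$, I must check
\begin{equation*}
  \theta_Q\ast\C^2(\beta,\alpha)=\C^2(\beta,\alpha)\ast\theta_P.
\end{equation*}
Both sides have component
\begin{equation*}
  \bigl((\beta_x,\alpha_c),\alpha_c\bigr)\colon((Tx,Su),Sv)\longrightarrow((T'x,S'(vu)),1_{S'c}).
\end{equation*}
The two sides agree because horizontal composition in the comma categories is computed componentwise. Similarly, $\eta$ and $\mu$ are already 2-natural by \Cref{prop:comma-monad}, so the counit, being an identity, is trivially a modification. Together these give $\mu\dashv\eta_{\C}$ with identity counit, so $\C$ is lax idempotent.
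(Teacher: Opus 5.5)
Your proof is correct, but it obtains the pointwise adjunction by a different route from the paper. The paper defines $\sigma^P_{((x,u),v)}=((1_x,v),1_c)$ directly. It then checks naturality of this unit and both triangle identities by hand, using the two comma equations of an arrow of $\C^2P$.

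You instead note that $\C P$ carries the split, hence normal, cleavage of \Cref{eq:free-cleavage}, and apply \Cref{cor:split-adjunction} to it. You then use the computation $L^{\C P}=M^P$ at the end of the proof of \Cref{thm:free-adjunction}. Both results precede \Cref{prop:kz}, so there is no circularity. The resulting unit $(\delta^{(x,u)}_v,1_c)=((1_x,v),1_c)$ is exactly the paper's $\sigma^P$.

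Your version shows that the Kock--Z\"oberlein adjunction at $P$ is the free-algebra instance of the adjunction between an action and the unit, the same adjunction that reappears for all strict algebras in \Cref{cor:algebra-adjunction}. The paper's direct check is self-contained and does not depend on the free-adjunction theorem. Your verification of the modification condition for 1-cells is the same as the paper's.

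There is one slip in your 2-cell check. The component $((\beta_x,\alpha_c),\alpha_c)$ is ill-typed: the inner base component of an arrow $((Tx,Su),Sv)\to((T'x,S'(vu)),1_{S'c})$ must go from $Sb$ to $S'c$. Computed componentwise, the two sides have inner base components $\alpha_c\,Sv$ and $S'v\,\alpha_b$. These agree by naturality of $\alpha$ at $v$, not merely because composition in the comma categories is componentwise. The clause is redundant anyway: by interchange, the 2-cell form of the modification axiom follows from the 1-cell form together with 2-naturality of $\eta_{\C}\mu$.
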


\begin{proof}
The monad law gives
$M^{P}H^{\C P}=1_{P\downarrow B}$, so take the counit to be the
identity.  Define the unit
\begin{equation*}
\sigma^{P}\colon1_{\C^2P}\Longrightarrow
  H^{\C P}M^{P}
\end{equation*}
at an object $((x,u),v)$ by the arrow
\begin{equation}
\sigma^{P}_{((x,u),v)}=((1_x,v),1_c)
  \colon((x,u),v)\longrightarrow((x,vu),1_c),
  \label{eq:kz-unit}
\end{equation}
where $u\colon Px\to b$ and $v\colon b\to c$.  This is an arrow of
$\C^2P$: its inner base component is $v$, and its outer base component
is $1_c$.  The comma equations are therefore
$vu=(vu)P1_x$ and $1_cv=1_cv$.

Consider an arrow
\begin{equation*}
  ((f,r),s)\colon((x,u),v)\longrightarrow((y,w),z)
\end{equation*}
of $\C^2P$.  Its two comma equations are
\begin{equation*}
  ru=wPf,
  \qquad
  sv=zr.
\end{equation*}
In the naturality square for $\sigma^{P}$, both composites have inner
total component $f$ and outer base component $s$.  Their inner base
components are $sv$ and $zr$, respectively, and these are equal by the
second comma equation.  Thus $\sigma^{P}$ is natural and lies over
$1_B$.

The family $\sigma^{P}$ is natural also in the object $P$ of the arrow
2-category: for every square $(T,S)\colon P\to Q$ one has
\begin{equation}
  \C^2(T,S)\,\sigma^P
  =\sigma^Q\,\C^2(T,S).
  \label{eq:kz-modification-naturality}
\end{equation}
If $Z=((x,u),v)$ is an object of $\Dom\C^2P$, then
\Cref{eq:kz-modification-naturality} is the commutative square
\begin{equation*}
\begin{tikzcd}[column sep=huge,row sep=large]
Z \arrow[r,"{\overline{\overline T}}"]
  \arrow[d,"{\sigma^P_Z}"']
& \overline{\overline T}Z
  \arrow[d,"{\sigma^Q_{\overline{\overline T}Z}}"] \\
H^{\C P}M^PZ
  \arrow[r,"{\overline{\overline T}}"']
& H^{\C Q}M^Q\overline{\overline T}Z .
\end{tikzcd}
\end{equation*}
Indeed, applying $\C^2(T,S)$ to the component in
\Cref{eq:kz-unit} gives
\begin{equation*}
  ((1_{Tx},Sv),1_{Sc}),
\end{equation*}
which is the component of $\sigma^{Q}$ at
$((Tx,Su),Sv)$.  The same componentwise equality is compatible with a
2-cell $(\beta,\alpha)$ between squares.  Thus the units form the
modification
\begin{equation*}
  \sigma\colon 1_{\C^{2}}\Longrightarrow\eta_{\C}\mu
\end{equation*}
required by the adjunction $\mu\dashv\eta_{\C}$.

At an object in the image of $H^{\C P}$, the component defined in
\Cref{eq:kz-unit} is an identity because $v$ is an identity.  Also,
$M^{P}\sigma^{P}$ is the identity, since $M^{P}$ retains the total
component $1_x$ and the outer component $1_c$.  These are the two
triangle identities.  Hence \Cref{eq:kz-adjunction} is an adjunction
with identity counit.
\end{proof}

\section{Strict algebras are split cofibrations}
\label{sec:monadicity}

\subsection{The strict Eilenberg--Moore 2-category}

We first make the 2-dimensional target of the comparison explicit.
An object of $\Alg(\C)$ is a functor $P\colon E\to B$ together with a
1-cell
\begin{equation*}
a=(A,R)\colon\C P\longrightarrow P
\end{equation*}
in $\CAT^{\mathbf 2}$ such that
\begin{equation}
a\eta_P=1_P,
  \qquad
  a\mu_P=a\C a.
  \label{eq:abstract-algebra-laws}
\end{equation}
Since the base component of $\eta_P$ is $1_B$, the first equality
forces $R=1_B$.  We may therefore write every action as
\begin{equation*}
(A,1_B)\colon\C P\longrightarrow P,
  \qquad
  PA=\Cod_P.
\end{equation*}
On domain components the algebra laws are
\begin{equation}
AH^{P}=1_E,
  \qquad
  AM^{P}=A\overline A,
  \label{eq:algebra-laws}
\end{equation}
where $\overline A$ is the domain component of
$\C(A,1_B)\colon\C^2P\to\C P$.

A strict algebra morphism
\begin{equation*}
(T,S)\colon(P,A)\longrightarrow(Q,D)
\end{equation*}
is a 1-cell in $\CAT^{\mathbf 2}$ satisfying
\begin{equation}
TA=D\overline T.
  \label{eq:strict-algebra-morphism}
\end{equation}
An algebra 2-cell from $(T,S)$ to $(T',S')$ is a 2-cell
$(\beta,\alpha)$ in $\CAT^{\mathbf 2}$ satisfying
\begin{equation}
\beta A=D\overline\beta.
  \label{eq:algebra-2cell}
\end{equation}
The component of the right-hand side at $(x,u)$ is
$D(\beta_x,\alpha_b)$, with $\overline\beta$ as in
\Cref{eq:bar-beta}.

\subsection{From a strict action to a split cleavage}

Let $(P,A)$ be a strict $\C$-algebra.  For
$u\colon Px\to b$, define
\begin{equation}
u_{!}x:=A(x,u),
  \qquad
  \delta^{x}_{u}:=A(1_x,u)
  \colon x\longrightarrow u_{!}x.
  \label{eq:delta-from-action}
\end{equation}
Here $(1_x,u)\colon H^{P}x\to(x,u)$ is the canonical arrow of
$P\!\downarrow\!B$.  Since $PA=\Cod_P$, one has
$P\delta^{x}_{u}=u$.

The following identity is the key consequence of the multiplication
law that is not visible from the unit law alone.

\begin{lemma}[Absorption]\label{lem:absorption}
For every $(x,u)\in P\!\downarrow\!B$, the comma arrow
\begin{equation*}
(\delta^{x}_{u},1_b)\colon(x,u)\longrightarrow H^{P}(u_{!}x)
\end{equation*}
satisfies
\begin{equation}
A(\delta^{x}_{u},1_b)=1_{u_{!}x}.
  \label{eq:absorption}
\end{equation}
\end{lemma}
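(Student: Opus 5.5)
The plan is to realize the comma arrow $(\delta^{x}_{u},1_b)$ as the image under $\overline A$ of a suitable arrow of $\C^2P$, and then apply the multiplication law $AM^{P}=A\overline A$ of \Cref{eq:algebra-laws}. The natural candidate is the KZ unit component of \Cref{prop:kz}, but used in the opposite direction: rather than $\sigma$, take the arrow
\begin{equation*}
  \theta=((1_x,u),1_b)\colon((x,1_{Px}),u)\longrightarrow((x,u),1_b)
\end{equation*}
of $\Dom\C^2P$. Its comma equations are $u\,1_{Px}=u\,P1_x$ and $1_b u=1_b u$, so it is indeed an arrow.

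First, compute $M^{P}\theta$. By \Cref{eq:M-definition}, $M^P$ keeps the total component and the outer base component, so
\begin{equation*}
  M^{P}\theta=(1_x,1_b)\colon(x,u)\to(x,u).
\end{equation*}
This is the identity of $(x,u)$, and hence $AM^{P}\theta=1_{u_!x}$.

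Second, compute $\overline A\theta$. Here $\overline A$ is the domain component of $\C(A,1_B)$, so it applies $A$ to the inner comma component and $1_B$ to the outer one. This gives
\begin{equation*}
  \overline A\theta=(A(1_x,u),1_b)=(\delta^x_u,1_b).
\end{equation*}
On objects, $\overline A((x,1_{Px}),u)=(A(x,1_{Px}),u)=(x,u)$ by the unit law $AH^P=1_E$. Likewise $\overline A((x,u),1_b)=(u_!x,1_b)=H^P(u_!x)$, using $PA=\Cod_P$. So $\overline A\theta$ is exactly the comma arrow $(\delta^{x}_{u},1_b)\colon(x,u)\to H^{P}(u_!x)$ in the statement.

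Finally, the multiplication law gives
\begin{equation*}
  A(\delta^x_u,1_b)=A\overline A\theta=AM^P\theta=1_{u_!x},
\end{equation*}
which is \Cref{eq:absorption}.

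The main obstacle is choosing the right arrow of $\C^2P$: it must be sent by $\overline A$ to $(\delta^x_u,1_b)$, and at the same time be sent by $M^P$ to an identity. The rest is bookkeeping. This includes checking that the source object $((x,1_{Px}),u)$ is sent by $\overline A$ to $(x,u)$, which is where the unit law enters, and checking the identification $u_!x=A(x,u)$ in the target.
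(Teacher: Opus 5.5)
Your proof is correct and is essentially the paper's proof: your $\theta$ is exactly the paper's test arrow $\varpi_{x,u}=((1_x,u),1_b)$, and you apply $M^P$ and $\overline A$ to it and invoke $AM^P=A\overline A$, just as the paper does. One small correction to your framing: $\theta$ is not a reversed form of the KZ unit; it coincides with the component $\sigma^P_{((x,1_{Px}),u)}$ itself, and this does not affect the argument.
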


\begin{proof}
Write $a=Px$.  In $\C^2P$, consider the arrow
\begin{equation}
\varpi_{x,u}=((1_x,u),1_b)
  \colon ((x,1_a),u)\longrightarrow((x,u),1_b).
  \label{eq:absorption-test-arrow}
\end{equation}
Under $M^{P}$, both its source and target become $(x,u)$ and the
arrow becomes the identity $(1_x,1_b)$.  Hence
\begin{equation*}
AM^{P}(\varpi_{x,u})=1_{u_{!}x}.
\end{equation*}
On the other hand, $\overline A$ sends the source of
$\varpi_{x,u}$ to $(x,u)$, because $AH^{P}=1_E$, and it sends the
target to $H^{P}(u_{!}x)$.  Its value on $\varpi_{x,u}$ is
$(\delta^{x}_{u},1_b)$.  Applying the multiplication law
$AM^{P}=A\overline A$ to \Cref{eq:absorption-test-arrow} gives
\Cref{eq:absorption}.
\begin{samepage}
The two images of $\varpi_{x,u}$ are displayed by
\begin{equation*}
\begin{tikzcd}[column sep=huge,row sep=large]
((x,1_a),u) \arrow[r,"\varpi_{x,u}"] \arrow[d,"M^P"']
  & ((x,u),1_b) \arrow[d,"M^P"] \\
(x,u) \arrow[r,"1_{(x,u)}"'] & (x,u)
\end{tikzcd}
\end{equation*}
and
\begin{equation*}
\begin{tikzcd}[column sep=huge,row sep=large]
((x,1_a),u) \arrow[r,"\varpi_{x,u}"] \arrow[d,"{\overline A}"']
  & ((x,u),1_b) \arrow[d,"{\overline A}"] \\
(x,u) \arrow[r,"{(\delta^x_u,1_b)}"'] & H^P(u_!x).
\end{tikzcd}
\end{equation*}
\end{samepage}
\end{proof}

\begin{corollary}\label{cor:algebra-adjunction}
For every strict $\C$-algebra $(P,A)$, there is an adjunction in
$\CAT^{\mathbf 2}$
\begin{equation}
  (A,1_B)\dashv(H^{P},1_B)
  \label{eq:algebra-adjunction}
\end{equation}
with identity counit.  Its unit at $(x,u)$ is
\begin{equation}
  \tau_{(x,u)}=(A(1_x,u),1_b)
  \colon(x,u)\longrightarrow H^{P}A(x,u).
  \label{eq:algebra-adjunction-unit}
\end{equation}
\end{corollary}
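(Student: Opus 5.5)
We verify an adjunction in $\CAT^{\mathbf 2}$ with identity counit directly, componentwise, mirroring \Cref{cor:split-adjunction}. The counit is forced: the unit law in \Cref{eq:algebra-laws} gives $AH^{P}=1_E$, so we take $\varepsilon=1_{1_E}$. Both $(A,1_B)$ and $(H^P,1_B)$ are 1-cells of $\CAT^{\mathbf 2}$ over $1_B$, since $PA=\Cod_P$ and $\Cod_PH^P=P$. Every transformation below will lie over $1_B$, so each 2-cell of $\CAT^{\mathbf 2}$ has base component the identity and \Cref{eq:arrow-2-cell} is automatic.

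First we check that \Cref{eq:algebra-adjunction-unit} is a well-defined comma arrow. Its source is $(x,u)$ and its target is $H^PA(x,u)=(u_!x,1_b)$, using $PA(x,u)=b$. The comma condition reads $1_b\,u=1_b\,P\delta^x_u$, and this holds because $P\delta^x_u=u$.

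Next comes naturality. For a comma arrow $(f,v)\colon(x,u)\to(y,w)$, the two composites in the naturality square are $\bigl(A(f,v)\,A(1_x,u),\,v\bigr)$ and $\bigl(A(1_y,w)f,\,v\bigr)$. We must show $A(f,v)A(1_x,u)=A(1_y,w)\,f$ in $E$. In $P\!\downarrow\!B$ we have
\[
(f,v)(1_x,u)=(f,vu)=(f,wPf)=(1_y,w)(f,Pf)=(1_y,w)H^Pf,
\]
with all four arrows starting at $H^Px$. Applying the functor $A$ and using $AH^Pf=f$ gives the required equality. This is the step that replaces cocartesian uniqueness from \Cref{cor:split-adjunction}; only functoriality and the unit law are needed.

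Finally we check the triangle identities.
\begin{itemize}[leftmargin=2.2em]
\item \emph{At $H^P$.} We need $H^P\varepsilon\circ\tau H^P=1$, that is, $\tau_{(x,1_{Px})}=1$. Its component is $(A(1_x,1_{Px}),1_{Px})=(AH^P1_x,1)=(1_x,1)$, so this holds.
\item \emph{At $A$.} We need $\varepsilon A\circ A\tau=1_A$, that is, $A(\delta^x_u,1_b)=1_{u_!x}$. This is exactly the absorption identity \Cref{eq:absorption} of \Cref{lem:absorption}.
\end{itemize}
The second triangle is the only point where the multiplication law enters, and it is the main obstacle. Since \Cref{lem:absorption} already supplies it, the remaining verifications are routine. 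Both triangle equations hold strictly, which yields the adjunction \Cref{eq:algebra-adjunction} with identity counit.
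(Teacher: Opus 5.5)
Your proposal is correct and matches the paper's proof step for step. The identity counit comes from $AH^{P}=1_E$, and naturality of $\tau$ comes from functoriality of $A$ together with the unit law; you usefully make explicit the factorization $(f,v)(1_x,u)=(1_y,w)H^{P}f$ that the paper leaves implicit. The triangle at $H^{P}$ again follows from the unit law, and the triangle at $A$ is the absorption identity of \Cref{lem:absorption}, which is the only place the multiplication law enters, exactly as in the paper.
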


\begin{proof}
Write $\delta^{x}_{u}=A(1_x,u)$.  For a comma arrow
$(f,v)\colon(x,u)\to(y,w)$, functoriality of $A$ gives
\begin{equation}
  A(f,v)\delta^{x}_{u}=\delta^{y}_{w}f.
  \label{eq:algebra-unit-naturality}
\end{equation}
The total components of the two sides of the naturality square for
$\tau$ are the two sides of \Cref{eq:algebra-unit-naturality}, while
both base components are $v$.  Thus $\tau$ is a natural transformation
over $1_B$; explicitly,
\begin{equation*}
\begin{tikzcd}[column sep=huge,row sep=large]
(x,u) \arrow[r,"{(f,v)}"]
  \arrow[d,"{(\delta^x_u,1_b)}"']
& (y,w) \arrow[d,"{(\delta^y_w,1_c)}"] \\
H^PA(x,u) \arrow[r,"{H^PA(f,v)}"']
& H^PA(y,w)
\end{tikzcd}
\end{equation*}
commutes.  The algebra unit law $AH^{P}=1_E$ gives the identity
counit.  At $H^{P}x$, the unit component is the identity by the same
algebra law, proving the triangle equation at the right adjoint.  The
other triangle equation is
\begin{equation*}
  A(\delta^{x}_{u},1_b)=1_{A(x,u)};
\end{equation*}
it is the absorption identity of \Cref{lem:absorption}.  Hence
\Cref{eq:algebra-adjunction} is an adjunction in the arrow
2-category.
\end{proof}

For the splitting corresponding to $A$, this is the adjunction
\Cref{eq:LP-adjunction}.  We shall not use adjointness to prove
monadicity; the direct argument below exposes the algebra equations
responsible for cocartesianness and splitting.

\begin{proposition}\label{prop:algebra-gives-splitting}
The arrows defined in \Cref{eq:delta-from-action} form a split cleavage for
$P$.  Thus the underlying functor of every strict $\C$-algebra is a
split cofibration.
\end{proposition}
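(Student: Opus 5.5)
We must prove three things about the arrows $\delta^x_u=A(1_x,u)$ of \Cref{eq:delta-from-action}: each is $P$-cocartesian, the cleavage is normal, and the composition laws \Cref{eq:split-functors} and \Cref{eq:split-delta} hold. That $P\delta^x_u=u$ was already noted from $PA=\Cod_P$.

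\emph{Cocartesianness.} Let $g\colon x\to z$ and $v\colon b\to Pz$ satisfy $Pg=vu$. Then $(g,v)\colon(x,u)\to H^Pz=(z,1_{Pz})$ is a comma arrow. Put $h:=A(g,v)\colon u_!x\to AH^Pz=z$. Then $Ph=v$ because $PA=\Cod_P$, and $h\delta^x_u=A((g,v)(1_x,u))=A(g,Pg)=AH^Pg=g$ by the unit law in \Cref{eq:algebra-laws}. For uniqueness, let $h'\colon u_!x\to z$ satisfy $h'\delta^x_u=g$ and $Ph'=v$. Then $(h',v)\colon H^P(u_!x)\to H^Pz$ equals $H^Ph'$, and $(h',v)(\delta^x_u,1_b)=(g,v)$. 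Applying $A$ and using absorption (\Cref{lem:absorption}) gives
\[
h'=AH^Ph'=A(h',v)\,A(\delta^x_u,1_b)=A(g,v)=h .
\]
This is exactly the adjunction of \Cref{cor:algebra-adjunction}, unpacked. Absorption is the one nontrivial ingredient, and it has already been established.

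\emph{Normality.} We have $\delta^x_{1_{Px}}=A(1_x,1_{Px})=AH^P(1_x)=1_x$ by the unit law.

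\emph{Splitting.} This step is the main obstacle, because it needs the multiplication law applied to well-chosen arrows of $\C^2P$, in the same way as the absorption proof. For $u\colon Px\to b$ and $v\colon b\to c$, evaluate $AM^P=A\overline A$ on the object $((x,u),v)$. On the left this gives $A(x,vu)=(vu)_!x$. On the right, $\overline A((x,u),v)=(u_!x,v)$, so it gives $v_!u_!x$. Hence $(vu)_!x=v_!u_!x$ on objects. For \Cref{eq:split-delta}, I would apply both sides of the law to the arrow
\[
((1_x,u),v)\colon((x,1_{Px}),vu)\longrightarrow((x,u),v)
\]
of $\C^2P$; its comma equations are $u1=uP1_x$ and $v(vu)$-compatibility $v\cdot u=v\cdot u$, which I would check. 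Under $M^P$ it becomes $(1_x,vu)$, which $A$ sends to $\delta^x_{vu}$. Under $\overline A$ it becomes $(A(1_x,u),v)=(\delta^x_u,v)\colon(x,vu)\to(u_!x,v)$, and
\[
A(\delta^x_u,v)=A\bigl((1_{u_!x},v)(\delta^x_u,1_b)\bigr)=\delta^{u_!x}_v\,A(\delta^x_u,1_b)=\delta^{u_!x}_v\delta^x_u ,
\]
where the last equality uses absorption again. If the bookkeeping of source and target for this arrow fails, I would instead use $((1_x,u),1_c)$ composed with the canonical lift.

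Finally, \Cref{eq:split-functors} on vertical arrows follows from the object equality together with \Cref{eq:split-delta} and cocartesian uniqueness, via \Cref{eq:pushforward-arrow}. Both $(vu)_!f$ and $v_!u_!f$ are vertical, and both composites with $\delta^x_{vu}=\delta^{u_!x}_v\delta^x_u$ equal $\delta^y_{vu}f$. Uniqueness in the cocartesian property therefore forces them to coincide.
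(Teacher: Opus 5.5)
Your arguments for cocartesianness, normality and the object equation $(vu)_!x=v_!u_!x$ are the paper's. Deducing \Cref{eq:split-functors} on vertical arrows from \Cref{eq:split-delta} by cocartesian uniqueness is a valid, slightly more economical substitute for the paper's second test arrow $((f,1_b),1_c)$.

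The step for \Cref{eq:split-delta}, however, does not work as written, for three reasons.
\begin{enumerate}[label=\textup{(\arabic*)},leftmargin=2.2em]
\item The arrow is ill-typed. An arrow of $\C^2P$ from $((x,1_{Px}),vu)$ to $((x,u),v)$ has outer base component an arrow $c\to c$, so $((1_x,u),v)$ does not type-check. The evident correction is $((1_x,u),1_c)$.
\item Even after that correction, $M^P$ sends the arrow to the identity of $(x,vu)$, not to $(1_x,vu)$. So the left side of $AM^P=A\overline A$ is $1_{(vu)_!x}$, not $\delta^x_{vu}$.
\item On the other side, $\overline A$ gives $(\delta^x_u,1_c)\colon(x,vu)\to(u_!x,v)$. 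The factorization $(1_{u_!x},v)(\delta^x_u,1_b)$ you use belongs to a different comma arrow, $(\delta^x_u,v)$ with domain $(x,u)$. Moreover, absorption says $A(\delta^x_u,1_b)=1_{u_!x}$, not $\delta^x_u$, so your chain actually computes $A(\delta^x_u,v)=\delta^{u_!x}_v$.
\end{enumerate}
The slips happen to cancel and produce the right formula, but the derivation is not valid.

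The repair is the one your fallback gestures at. Composing $((1_x,u),1_c)$ with the canonical lift $((1_x,1_{Px}),vu)\colon((x,1_{Px}),1_{Px})\to((x,1_{Px}),vu)$ gives exactly the paper's test arrow
\[
\omega_{x,u,v}=((1_x,u),vu)\colon((x,1_{Px}),1_{Px})\to((x,u),v).
\]
Now $M^P$ really does send it to $(1_x,vu)\colon H^Px\to(x,vu)$, so $A$ sends it to $\delta^x_{vu}$. Meanwhile $\overline A$ sends it to $(\delta^x_u,vu)\colon H^Px\to(u_!x,v)$, which factors as $(1_{u_!x},v)(\delta^x_u,1_b)(1_x,u)$. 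Applying $A$ gives $\delta^{u_!x}_v\,1_{u_!x}\,\delta^x_u$. Absorption removes only the middle factor; the factor $\delta^x_u$ comes from the leg $(1_x,u)$, which your version lacks.

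Alternatively, you can keep $((1_x,u),1_c)$. The multiplication law then yields the generalized absorption $A(\delta^x_u,1_c)=1_{(vu)_!x}$. Precomposing with $\delta^x_{vu}=A(1_x,vu)$ gives $\delta^x_{vu}=A(\delta^x_u,vu)=\delta^{u_!x}_v\delta^x_u$, the same conclusion.
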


\begin{proof}
We prove cocartesianness first.  Suppose that $g\colon x\to z$ and
$w\colon b\to Pz$ satisfy $Pg=wu$.  Then
$(g,w)\colon(x,u)\to H^{P}z$ is a comma arrow.  Set
\begin{equation*}
\widehat g:=A(g,w)\colon u_{!}x\longrightarrow z.
\end{equation*}
It is the dashed arrow in
\begin{equation*}
\begin{tikzcd}[column sep=large,row sep=large]
x \arrow[r,"\delta^x_u"] \arrow[dr,"g"']
& u_!x \arrow[d,dashed,"\widehat g"] \\
& z,
\end{tikzcd}
\qquad P\widehat g=w,
\end{equation*}
and functoriality together with the unit law gives
\begin{equation}
\widehat g\,\delta^{x}_{u}
  =A(g,w)A(1_x,u)
  =A(g,Pg)
  =AH^{P}(g)
  =g.
  \label{eq:action-factorization}
\end{equation}
If $h\colon u_{!}x\to z$ is any other arrow over $w$ with
$h\delta^{x}_{u}=g$, then \Cref{eq:absorption} yields
\begin{align*}
  h
  &=AH^{P}(h)\\
  &=AH^{P}(h)\,A(\delta^{x}_{u},1_b)\\
  &=A(g,w)=\widehat g.
\end{align*}
Indeed, in the comma category the composite of
$(\delta^{x}_{u},1_b)$ with $H^{P}(h)=(h,w)$ is $(g,w)$.
This proves the universal property stated in \Cref{eq:cocartesian-up}.

The unit action law gives
\begin{equation}
(1_{Px})_{!}x=AH^{P}x=x,
  \qquad
  \delta^{x}_{1_{Px}}=AH^{P}(1_x)=1_x.
  \label{eq:action-unit-split}
\end{equation}
For composable $u\colon Px\to b$ and $v\colon b\to c$, applying
$AM^{P}=A\overline A$ to the object $((x,u),v)$ gives
\begin{equation}
(vu)_{!}x=v_{!}(u_{!}x).
  \label{eq:action-object-split}
\end{equation}
It remains to check the cleavage arrows.  In $\C^2P$, take
\begin{equation}
\omega_{x,u,v}=((1_x,u),vu)
  \colon((x,1_{Px}),1_{Px})\longrightarrow((x,u),v).
  \label{eq:multiplication-test-arrow}
\end{equation}
The left side of the multiplication law sends this arrow to
$\delta^{x}_{vu}$.  For the right side, $\overline A$ sends it to
the comma arrow
\begin{equation*}
(\delta^{x}_{u},vu)
  \colon H^{P}x\longrightarrow(u_{!}x,v).
\end{equation*}
This arrow factors in $P\!\downarrow\!B$ as
\begin{equation*}
\begin{tikzcd}[column sep=huge]
H^{P}x
  \arrow[r,"{(1_x,u)}"]
& (x,u)
  \arrow[r,"{(\delta^{x}_{u},v)}"]
& (u_{!}x,v),
\end{tikzcd}
\end{equation*}
and the second arrow factors as
\begin{equation*}
(\delta^{x}_{u},v)
  =(1_{u_{!}x},v)(\delta^{x}_{u},1_b).
\end{equation*}
Applying $A$, then using \Cref{eq:delta-from-action} and
\Cref{eq:absorption}, shows that the right side of the algebra law
applied to \Cref{eq:multiplication-test-arrow} is
$\delta^{u_{!}x}_{v}\delta^{x}_{u}$.  Hence
\begin{equation}
\delta^{x}_{vu}=\delta^{u_{!}x}_{v}\delta^{x}_{u}.
  \label{eq:action-delta-split}
\end{equation}
For a vertical $f\colon x\to y$ in the fibre over $a$, the pushforward
induced by this cleavage is
$u_{!}f=A(f,1_b)$, viewing $(f,1_b)\colon(x,u)\to(y,u)$ as a comma
arrow.  The defining equality in \Cref{eq:pushforward-arrow} follows by
functoriality of $A$, as do
\begin{equation*}
  u_{!}(1_x)=1_{u_{!}x},
  \qquad
  u_{!}(gf)=(u_{!}g)(u_{!}f).
\end{equation*}
The unit law gives $(1_a)_{!}f=f$.  Applying the multiplication law
to an arrow between $((x,u),v)$ and $((y,u),v)$ with components
$((f,1_b),1_c)$ gives
\begin{equation*}
(vu)_{!}f=v_{!}(u_{!}f).
\end{equation*}
Together with
\Cref{eq:action-unit-split,eq:action-object-split,eq:action-delta-split},
these are all the split-cleavage axioms.
\end{proof}

\subsection{From a split cleavage to a strict action}

We now prove that \Cref{lem:action-from-cleavage} supplies the inverse
construction.

\begin{proposition}\label{prop:splitting-gives-algebra}
If $P\colon E\to B$ is a split cofibration, then
\begin{equation*}
(L^{P},1_B)\colon\C P\longrightarrow P
\end{equation*}
is a strict $\C$-algebra action.
\end{proposition}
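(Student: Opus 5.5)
We must verify the two strict algebra laws of \Cref{eq:algebra-laws} for $A=L^{P}$, namely $L^{P}H^{P}=1_E$ and $L^{P}M^{P}=L^{P}\overline{L^{P}}$; the base component is already $1_B$ and $PL^{P}=\Cod_P$ by \Cref{lem:action-from-cleavage}, so $(L^{P},1_B)$ is a 1-cell $\C P\to P$. A split cleavage is in particular normal, so the unit law is exactly \Cref{eq:LH}. The work lies entirely in the multiplication law, which we check separately on objects and on arrows of $\C^2P$.

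On an object $((x,u),v)$, with $u\colon Px\to b$ and $v\colon b\to c$, the left side is $L^{P}(x,vu)=(vu)_{!}x$. The right side is $L^{P}(u_{!}x,v)=v_{!}(u_{!}x)$, because $\overline{L^{P}}((x,u),v)=(L^{P}(x,u),v)$. These objects agree by the splitting law \Cref{eq:split-delta}, taking codomains; equivalently, by \Cref{eq:split-functors} on objects.

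On an arrow $((f,r),s)\colon((x,u),v)\to((y,w),z)$, the left side is $L^{P}(f,s)\colon(vu)_{!}x\to(zw)_{!}y$. This is the unique arrow over $s$ with $L^{P}(f,s)\,\delta^{x}_{vu}=\delta^{y}_{zw}f$. The right side is $L^{P}(L^{P}(f,r),s)$, the unique arrow over $s$ with
\begin{equation*}
L^{P}(L^{P}(f,r),s)\,\delta^{u_!x}_{v}=\delta^{w_!y}_{z}\,L^{P}(f,r).
\end{equation*}
Both arrows lie over $s$, and the plan is to show they agree after precomposition with the cocartesian arrow $\delta^{x}_{vu}$; cocartesian uniqueness (\Cref{eq:cocartesian-up}, with $g$ the common composite and the base arrow $s$) then forces equality. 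Using $\delta^{x}_{vu}=\delta^{u_!x}_{v}\delta^{x}_{u}$, the right side precomposed with $\delta^{x}_{vu}$ becomes
\begin{equation*}
\delta^{w_!y}_{z}\,L^{P}(f,r)\,\delta^{x}_{u}
=\delta^{w_!y}_{z}\,\delta^{y}_{w}\,f
=\delta^{y}_{zw}\,f,
\end{equation*}
by the defining property of $L^{P}(f,r)$ and then \Cref{eq:split-delta} again. This matches the left side.

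The main obstacle is only bookkeeping: one must confirm that the two compared arrows have the same domain and codomain, which is given by the object-level equalities just proved, and that they lie over the same base arrow $s$. Since both sides have base component $s$, the uniqueness clause applies directly. Identities and composites need no separate check, because both sides are already functors. This completes the plan.
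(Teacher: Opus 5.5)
Your proposal is correct and follows essentially the same route as the paper's proof. Both derive the unit law from normality via \Cref{eq:LH}, get the multiplication law on objects from the splitting, and prove it on arrows by showing that both sides lie over $s$ and agree after precomposition with the cocartesian lift $\delta^x_{vu}=\delta^{u_!x}_v\delta^x_u$, using the defining property of $L^P(f,r)$.
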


\begin{proof}
The equality in \Cref{eq:LH} is the unit law.  We prove
\begin{equation}
L^{P}M^{P}=L^{P}\overline{L^{P}}.
  \label{eq:LP-multiplication}
\end{equation}
At an object $((x,u),v)$, the left side is $(vu)_{!}x$ and the right
side is $v_{!}(u_{!}x)$, so equality follows from
\Cref{eq:split-functors}.

Consider an arbitrary arrow in $\C^2P$,
\begin{equation}
\Xi=((f,r),s)
  \colon((x,u),v)\longrightarrow((y,w),z).
  \label{eq:Xi}
\end{equation}
Thus
\begin{equation}
ru=wPf,
  \qquad
  sv=zr.
  \label{eq:Xi-relations}
\end{equation}
The arrow $L^{P}M^{P}(\Xi)$ lies over $s$ and is characterized by
\begin{equation}
L^{P}M^{P}(\Xi)\,\delta^{x}_{vu}
  =\delta^{y}_{zw}f.
  \label{eq:LM-characterization}
\end{equation}
The other side of \Cref{eq:LP-multiplication} is the arrow over $s$
characterized by
\begin{equation}
L^{P}\overline{L^{P}}(\Xi)\,
  \delta^{u_{!}x}_{v}
  =\delta^{w_{!}y}_{z}L^{P}(f,r).
  \label{eq:LL-characterization}
\end{equation}
The two defining lift squares form the diagram
\begin{equation}
\begin{tikzcd}[column sep=large,row sep=large]
x \arrow[r,"\delta^x_u"] \arrow[d,"f"']
& u_!x \arrow[r,"\delta^{u_!x}_v"]
  \arrow[d,"{L^P(f,r)}"]
& v_!u_!x \arrow[d,"{L^P\overline{L^P}(\Xi)}"] \\
y \arrow[r,"\delta^y_w"']
& w_!y \arrow[r,"\delta^{w_!y}_z"']
& z_!w_!y .
\end{tikzcd}
\label{eq:LP-multiplication-diagram}
\end{equation}
The upper and lower composites are $\delta^x_{vu}$ and
$\delta^y_{zw}$ by the split cleavage law.
Precompose \Cref{eq:LL-characterization} with $\delta^{x}_{u}$.
The split equation in \Cref{eq:split-delta} and the defining property of
$L^{P}(f,r)$ give
\begin{equation*}
L^{P}\overline{L^{P}}(\Xi)\,\delta^{x}_{vu}
  =\delta^{w_{!}y}_{z}\delta^{y}_{w}f
  =\delta^{y}_{zw}f.
\end{equation*}
Consequently the two arrows both satisfy
\Cref{eq:LM-characterization}.  Cocartesian uniqueness makes them
equal.  This proves \Cref{eq:LP-multiplication} on arrows and hence the
strict algebra law.
\end{proof}

\begin{proposition}\label{prop:object-inverses}
The constructions of \Cref{prop:algebra-gives-splitting} and
\Cref{prop:splitting-gives-algebra} are mutually inverse strictly.
\end{proposition}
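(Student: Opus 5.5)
We must check both composites are identities on objects. The two round trips are: start from a split cleavage $\delta$, form $L^P$ by \Cref{prop:splitting-gives-algebra}, and extract a cleavage by \Cref{eq:delta-from-action}; or start from a strict action $A$, extract the split cleavage by \Cref{prop:algebra-gives-splitting}, and form $L^P$ from it. Since both constructions leave the underlying functor $P$ untouched and the base component of any action is forced to be $1_B$, it suffices to compare cleavages in the first case and domain components of actions in the second.

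\textbf{Cleavage to action to cleavage.} Let $\delta$ be a split cleavage. The extracted cleavage is $L^P(1_x,u)\colon L^PH^Px\to L^P(x,u)$. By \Cref{eq:LH}, $L^PH^Px=x$, and by definition $L^P(x,u)=u_!x$. By \Cref{lem:action-from-cleavage}, $L^P(1_x,u)$ is the unique arrow over $u$ with
\begin{equation*}
L^P(1_x,u)\,\delta^x_{1_{Px}}=\delta^x_u\,1_x .
\end{equation*}
Normality gives $\delta^x_{1_{Px}}=1_x$, so $L^P(1_x,u)=\delta^x_u$. The objects $u_!x$ also agree, so the cleavage is recovered exactly. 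The split structure is a property of a cleavage, not extra data, since the functors $u_!$ on fibres are determined by \Cref{eq:pushforward-arrow}. Hence nothing further needs checking in this direction.

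\textbf{Action to cleavage to action.} Let $A$ be a strict action with associated split cleavage $\delta^x_u=A(1_x,u)$. We must show $L^P=A$ as functors.
\begin{itemize}[leftmargin=2.2em]
\item On objects this is immediate: $L^P(x,u)=u_!x=A(x,u)$.
\item For a comma arrow $(f,v)\colon(x,u)\to(y,w)$, the arrow $L^P(f,v)$ is characterized as the unique arrow over $v$ satisfying $L^P(f,v)\,\delta^x_u=\delta^y_wf$.
\item The arrow $A(f,v)$ lies over $v$ because $PA=\Cod_P$.
\item It satisfies the same equation by \Cref{eq:algebra-unit-naturality}, i.e.\ by functoriality of $A$ applied to $(f,v)(1_x,u)=(1_y,w)H^Pf$, together with $AH^P=1_E$.
\item Cocartesianness of $\delta^x_u$, established in \Cref{prop:algebra-gives-splitting}, then forces $A(f,v)=L^P(f,v)$.
\end{itemize}

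The main point needing care is this uniqueness step. It relies on the cocartesian property of the designated arrows, which in turn rested on the absorption identity of \Cref{lem:absorption}. Since that has already been proved, no genuine obstacle remains; the proof reduces to the two uniqueness arguments above. Both directions hold as strict equalities, which is what the proposition asserts.
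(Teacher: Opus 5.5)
Your proof is correct and takes the same approach as the paper's proof. In one direction, normality gives $L^P(1_x,u)=\delta^x_u$. In the other, functoriality of $A$ gives $A(f,v)\delta^x_u=\delta^y_w f$, and cocartesian uniqueness of $A(1_x,u)$ then identifies $A$ with $L^P$.
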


\begin{proof}
Starting from a split cleavage, \Cref{eq:delta-from-action} applied to
$L^{P}$ recovers
\begin{equation*}
L^{P}(1_x,u)\delta^x_{1_{Px}}=\delta^{x}_{u}.
\end{equation*}
By normality $\delta^x_{1_{Px}}=1_x$, and therefore
$L^{P}(1_x,u)=\delta^{x}_{u}$.  Thus the chosen cleavage is unchanged.

Conversely, start from an action $A$, form its cleavage by
\Cref{eq:delta-from-action}, and let $L^{P}$ be the functor constructed
from that cleavage.  For a comma arrow
$(f,v)\colon(x,u)\to(y,w)$, functoriality of $A$ gives
\begin{equation}
A(f,v)\delta^{x}_{u}
  =A(f,v)A(1_x,u)
  =A(f,wPf)
  =\delta^{y}_{w}f.
  \label{eq:A-characterization}
\end{equation}
Both $A(f,v)$ and $L^{P}(f,v)$ lie over $v$.  The cocartesian
uniqueness proved in \Cref{prop:algebra-gives-splitting} therefore
gives $A(f,v)=L^{P}(f,v)$.  They also agree on objects, so
$A=L^{P}$ as functors.
\end{proof}

\subsection{Morphisms and 2-cells}

\begin{proposition}\label{prop:one-cells}
Let $(P,L^{P})$ and $(Q,L^{Q})$ be the algebras associated with split
cofibrations.  A square $(T,S)\colon P\to Q$ is a strict
$\C$-algebra morphism if and only if it preserves the designated
cocartesian liftings strictly.
\end{proposition}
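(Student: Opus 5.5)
We prove the two implications separately, using the characterization of $L^{P}$ and $L^{Q}$ by cocartesian uniqueness from \Cref{lem:action-from-cleavage}. The morphism condition \Cref{eq:strict-algebra-morphism} reads $TL^{P}=L^{Q}\overline T$ as functors $P\!\downarrow\!B\to F$. Both sides lie over $S\Cod_P$, since $QTL^{P}=SPL^{P}=S\Cod_P$ and $QL^{Q}\overline T=\Cod_Q\overline T=S\Cod_P$.

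For the forward direction, suppose $TL^{P}=L^{Q}\overline T$ and evaluate both sides at the canonical comma arrow $(1_x,u)\colon H^{P}x\to(x,u)$. By \Cref{prop:object-inverses} and normality, $L^{P}(1_x,u)=\delta^{x}_{u}$. Hence the left side is $T\delta^{x}_{u}$. On the right, $\overline T(1_x,u)=(1_{Tx},Su)\colon H^{Q}Tx\to(Tx,Su)$. Here we use $\overline TH^{P}=H^{Q}T$, which holds because $QT=SP$. Applying $L^{Q}$ gives $\delta^{Tx}_{Su}$, again by normality of the split cleavage of $Q$. Comparing the two sides yields \Cref{eq:cleavage-preservation}, including the equality of codomains $T(u_!x)=(Su)_!Tx$.

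For the converse, assume \Cref{eq:cleavage-preservation}. On objects, $TL^{P}(x,u)=T(u_!x)=(Su)_!Tx=L^{Q}\overline T(x,u)$, where the middle equality is the codomain part of the hypothesis. On a comma arrow $(f,v)\colon(x,u)\to(y,w)$, both $TL^{P}(f,v)$ and $L^{Q}(Tf,Sv)$ lie over $Sv$. We precompose each with the cocartesian arrow $\delta^{Tx}_{Su}=T\delta^{x}_{u}$. For the first, applying $T$ to the defining equation $L^{P}(f,v)\delta^{x}_{u}=\delta^{y}_{w}f$ gives
\begin{equation*}
TL^{P}(f,v)\,\delta^{Tx}_{Su}=T\delta^{y}_{w}\,Tf=\delta^{Ty}_{Sw}\,Tf.
\end{equation*}
This is exactly the equation characterizing $L^{Q}(Tf,Sv)$ in \Cref{lem:action-from-cleavage}. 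Uniqueness in the cocartesian property of $\delta^{Tx}_{Su}$ then forces the two arrows to be equal. Hence $TL^{P}=L^{Q}\overline T$.

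No step is a serious obstacle. The only delicate point is bookkeeping: one must check that the two functors agree strictly on objects before comparing arrows. This is handled by the object-level part of \Cref{eq:cleavage-preservation}, which is equality of objects and not merely isomorphism. One also needs $\overline T H^{P}=H^{Q}T$ so that the test arrow $(1_x,u)$ is sent to the canonical arrow $(1_{Tx},Su)$. Alternatively, the converse follows from the postcomposition argument in the proof of \Cref{thm:free-adjunction}: both $TL^{P}$ and $L^{Q}\overline T$ are cleavage-preserving extensions of $(T,S)$ along $H^{P}$, and such extensions are unique.
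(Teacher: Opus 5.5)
Your proof is correct and follows the paper's argument essentially verbatim. In the forward direction you evaluate $TL^{P}=L^{Q}\overline T$ at the canonical comma arrow $(1_x,u)\colon H^{P}x\to(x,u)$. In the converse you match the two functors on objects via the object part of the preservation equation, and on arrows by cocartesian uniqueness after precomposing with $\delta^{Tx}_{Su}=T\delta^{x}_{u}$. Your extra bookkeeping only makes explicit steps the paper leaves implicit: normality identifies $L^{Q}(1_{Tx},Su)$ with $\delta^{Tx}_{Su}$, and $\overline T H^{P}=H^{Q}T$ sends the test arrow to the canonical one.
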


\begin{proof}
Suppose first that the strict algebra equation
\begin{equation}
TL^{P}=L^{Q}\overline T
  \label{eq:TL-equality}
\end{equation}
holds.  Evaluate it at the canonical comma arrow
$(1_x,u)\colon H^{P}x\to(x,u)$.  The left side is
$T\delta^{x}_{u}$ and the right side is
$\delta^{Tx}_{Su}$, so $(T,S)$ preserves the cleavage.

Conversely, assume \Cref{eq:cleavage-preservation}.  On objects it
gives
\begin{equation*}
T(u_{!}x)=(Su)_{!}Tx,
\end{equation*}
which is \Cref{eq:TL-equality} on objects.  For a comma arrow
$(f,v)\colon(x,u)\to(y,w)$, the arrow $TL^{P}(f,v)$ lies over $Sv$
and appears in
\begin{equation*}
\begin{tikzcd}[column sep=huge,row sep=large]
Tx \arrow[r,"\delta^{Tx}_{Su}"] \arrow[d,"Tf"']
& (Su)_!Tx \arrow[d,"{TL^P(f,v)}"] \\
Ty \arrow[r,"\delta^{Ty}_{Sw}"']
& (Sw)_!Ty .
\end{tikzcd}
\end{equation*}
After precomposition with $\delta^{Tx}_{Su}=T\delta^{x}_{u}$, it
therefore gives
\begin{equation*}
T(\delta^{y}_{w}f)=\delta^{Ty}_{Sw}Tf.
\end{equation*}
By definition, $L^{Q}(Tf,Sv)$ is the unique arrow over $Sv$ with
this property.  Hence $TL^{P}(f,v)=L^{Q}\overline T(f,v)$, proving
\Cref{eq:TL-equality}.
\end{proof}

The corresponding assertion for 2-cells is where the local fullness
in \Cref{def:scofib} meets the Eilenberg--Moore condition.

\begin{proposition}\label{prop:two-cells}
Let $(T,S),(T',S')\colon P\to Q$ preserve the designated split
cleavages.  Every 2-cell
\begin{equation*}
(\beta,\alpha)\colon(T,S)\Rightarrow(T',S')
\end{equation*}
in $\CAT^{\mathbf 2}$ automatically satisfies the algebra 2-cell
equation
\begin{equation}
\beta L^{P}=L^{Q}\overline\beta.
  \label{eq:auto-algebra-2cell}
\end{equation}
\end{proposition}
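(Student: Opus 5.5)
We check the equation componentwise at each object $(x,u)$ of $P\!\downarrow\!B$, with $u\colon Px\to b$. The left side has component $\beta_{u_!x}\colon T(u_!x)\to T'(u_!x)$. The right side has component $L^{Q}(\beta_x,\alpha_b)$, where $(\beta_x,\alpha_b)\colon(Tx,Su)\to(T'x,S'u)$ is the comma arrow of \Cref{eq:bar-beta}. Since the squares preserve the cleavages, \Cref{eq:cleavage-preservation} gives $T(u_!x)=(Su)_!Tx$ and $T'(u_!x)=(S'u)_!T'x$. So both sides are parallel arrows of $F$, and it remains to show they coincide.

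The plan is to use cocartesian uniqueness for the designated lift $\delta^{Tx}_{Su}=T\delta^x_u$. By \Cref{lem:action-from-cleavage}, $L^{Q}(\beta_x,\alpha_b)$ is the unique arrow lying over $\alpha_b$ that satisfies
\begin{equation*}
  L^{Q}(\beta_x,\alpha_b)\,\delta^{Tx}_{Su}=\delta^{T'x}_{S'u}\,\beta_x .
\end{equation*}
It therefore suffices to check two things about $\beta_{u_!x}$.

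First, $\beta_{u_!x}$ lies over $\alpha_b$. This is the 2-cell condition \Cref{eq:arrow-2-cell}, which gives $Q\beta_{u_!x}=\alpha_{P(u_!x)}=\alpha_b$, since $P(u_!x)=b$.

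Second, $\beta_{u_!x}$ satisfies the same equation. Naturality of $\beta$ at the arrow $\delta^x_u\colon x\to u_!x$ gives
\begin{equation*}
  \beta_{u_!x}\,T\delta^x_u=T'\delta^x_u\,\beta_x .
\end{equation*}
Rewriting $T\delta^x_u=\delta^{Tx}_{Su}$ and $T'\delta^x_u=\delta^{T'x}_{S'u}$ by cleavage preservation turns this into exactly the characterizing equation above. Cocartesian uniqueness then yields $\beta_{u_!x}=L^{Q}(\beta_x,\alpha_b)$.

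Equality of components proves \Cref{eq:auto-algebra-2cell}; naturality of both sides is already known, so nothing further is needed. The only delicate point is bookkeeping: the identifications of codomains must use the strict object equalities supplied by \Cref{eq:cleavage-preservation} for both $T$ and $T'$, so that the two sides really are parallel before uniqueness is invoked. Once that is in place there is no genuine obstacle. The result is precisely naturality of $\beta$ transported along the preserved lifts.
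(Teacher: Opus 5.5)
Your proof is correct and is the paper's argument: at each $(x,u)$ you show that $\beta_{u_!x}$ satisfies the defining equation of $L^{Q}(\beta_x,\alpha_b)$, using naturality of $\beta$ at $\delta^x_u$ and strict cleavage preservation, and then cocartesian uniqueness of $\delta^{Tx}_{Su}$ gives equality. Your explicit check that $Q\beta_{u_!x}=\alpha_{P(u_!x)}=\alpha_b$ via the arrow-2-cell condition makes visible the ``lies over $\alpha_b$'' hypothesis that the uniqueness step requires, which the paper only states in passing.
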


\begin{proof}
Fix $(x,u)$ with $u\colon Px\to b$.  The two sides of
\Cref{eq:auto-algebra-2cell} at $(x,u)$ are arrows
\begin{equation*}
(Su)_{!}Tx=T(u_{!}x)
  \longrightarrow
  T'(u_{!}x)=(S'u)_{!}T'x
\end{equation*}
over $\alpha_b$.  Precomposing the left side with the designated
lift of $Su$ gives, by naturality of $\beta$ and strict preservation
of the cleavages,
\begin{align}
  \beta_{u_{!}x}\,\delta^{Tx}_{Su}
  &=\beta_{u_{!}x}T\delta^{x}_{u}\notag\\
  &=T'\delta^{x}_{u}\,\beta_x\notag\\
  &=\delta^{T'x}_{S'u}\,\beta_x.
  \label{eq:2cell-precompose}
\end{align}
The component of the right side is
$L^{Q}(\beta_x,\alpha_b)$, and its defining property is precisely
the last expression in \Cref{eq:2cell-precompose}.  Since
$\delta^{Tx}_{Su}$ is $Q$-cocartesian, uniqueness proves
\Cref{eq:auto-algebra-2cell}.
The cancellation is displayed by
\begin{equation*}
\begin{tikzcd}[column sep=huge,row sep=large]
Tx \arrow[r,"\delta^{Tx}_{Su}"] \arrow[d,"\beta_x"']
& T(u_!x) \arrow[d,"\beta_{u_!x}"] \\
T'x \arrow[r,"\delta^{T'x}_{S'u}"']
& T'(u_!x),
\end{tikzcd}
\end{equation*}
whose image under $Q$ is the naturality square for $\alpha$ at $u$.
\end{proof}

\begin{theorem}[Strict 2-monadicity]\label{thm:main}
The comparison 2-functor
\begin{equation}
\Gamma\colon\SCoFib\longrightarrow\Alg(\C),
  \qquad
  P\longmapsto(P,L^{P}),
  \label{eq:comparison}
\end{equation}
is an isomorphism of 2-categories over $\CAT^{\mathbf 2}$.
Consequently the forgetful 2-functor
\begin{equation*}
U\colon\SCoFib\longrightarrow\CAT^{\mathbf 2}
\end{equation*}
is strictly 2-monadic.
\end{theorem}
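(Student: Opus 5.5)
The plan is to assemble the isomorphism from the propositions of this section, checking separately that $\Gamma$ is a well-defined 2-functor over $\CAT^{\mathbf 2}$ and that it is bijective on objects, on 1-cells and on 2-cells.

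\emph{Well-definedness.} On objects, $\Gamma$ sends a split cofibration $P$ to $(P,L^{P})$. This is a strict $\C$-algebra by \Cref{prop:splitting-gives-algebra}. On a 1-cell $(T,S)$ of $\SCoFib$, $\Gamma$ acts as the identity on the underlying square; \Cref{prop:one-cells} (the ``if'' direction) guarantees that this square is a strict algebra morphism. On a 2-cell, $\Gamma$ again acts as the identity; \Cref{prop:two-cells} shows that every $\CAT^{\mathbf 2}$-2-cell between cleavage-preserving squares satisfies \Cref{eq:algebra-2cell}. Since the compositions in both 2-categories are inherited from $\CAT^{\mathbf 2}$, $\Gamma$ is a strict 2-functor. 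It also commutes strictly with the forgetful 2-functors $U$ and $U_{\C}$, so it lies over $\CAT^{\mathbf 2}$.

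\emph{Bijectivity on objects.} Given a strict algebra $(P,A)$, \Cref{prop:algebra-gives-splitting} makes $P$ a split cofibration with cleavage $\delta^x_u=A(1_x,u)$. \Cref{prop:object-inverses} then shows that the two constructions are mutually inverse: $\Gamma$ of this split cofibration is $(P,A)$, and the algebra of a split cofibration returns its original cleavage. The one point I would state explicitly is that an object of $\SCoFib$ is a functor together with its designated split cleavage. Hence ``$\Gamma$ injective on objects'' means exactly that the cleavage is recovered from $L^P$, and this is the first half of \Cref{prop:object-inverses}.

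\emph{Bijectivity on 1-cells and 2-cells.} Between fixed objects, \Cref{prop:one-cells} (the ``only if'' direction) says that every strict algebra morphism $(P,L^P)\to(Q,L^Q)$ preserves the designated lifts, so it lies in $\SCoFib$. Since both 2-categories are locally full in $\CAT^{\mathbf 2}$ on 2-cells (\Cref{def:scofib}(iii), together with \Cref{prop:two-cells}), $\Gamma$ is bijective on 2-cells as well. Combining the three bijections, $\Gamma$ is an isomorphism of 2-categories over $\CAT^{\mathbf 2}$.

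\emph{Strict 2-monadicity.} It remains to identify $\Gamma$ with the comparison 2-functor of the 2-adjunction $\C^{\!*}\dashv U$ of \Cref{thm:free-adjunction}. By definition, that comparison sends $P$ to $U\varepsilon_P$. \Cref{eq:counit} computes $U\varepsilon_P=(L^P,1_B)$, and the comparison acts as the identity on 1-cells and 2-cells, so it coincides with $\Gamma$. I do not expect a real obstacle here: the substantive work is already contained in the cited propositions. The only delicate step is this final matching of $\Gamma$ with the canonical comparison, which requires the induced monad to be literally $\C$ (the last part of \Cref{thm:free-adjunction}, using $L^{\C P}=M^P$), and not merely isomorphic to it.
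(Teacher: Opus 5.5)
Your proposal is correct and follows essentially the same route as the paper, which likewise assembles the isomorphism from \Cref{prop:object-inverses}, \Cref{prop:one-cells} and \Cref{prop:two-cells}. Like you, it observes that the underlying squares and 2-cells are left unchanged, so identities, compositions and the forgetful 2-functors to $\CAT^{\mathbf 2}$ are respected. Your final paragraph, which identifies $\Gamma$ with the canonical comparison $P\mapsto U\varepsilon_P$ using \Cref{thm:free-adjunction}, spells out the step behind ``consequently'' that the paper leaves implicit.
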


\begin{proof}
By \Cref{prop:object-inverses}, the object function of $\Gamma$ has a
strict inverse: it sends $(P,A)$ to the split cleavage defined by
\Cref{eq:delta-from-action}, and the recovered action is $A$.
By \Cref{prop:one-cells}, the strict algebra morphisms are precisely
the 1-cells of $\SCoFib$, with no change in their underlying squares.
By \Cref{prop:two-cells}, the algebra 2-cells are precisely all
2-cells of $\CAT^{\mathbf 2}$ between those squares, again with no
change in the underlying pairs of natural transformations.  These
identifications leave the underlying squares and 2-cells unchanged and
preserve identities and compositions.  They therefore define a strict
inverse 2-functor.  Both functors commute with the forgetful
2-functors to $\CAT^{\mathbf 2}$.
\end{proof}

\begin{corollary}\label{cor:ordinary-monadicity}
After forgetting 2-cells, the category of split cofibrations and
strictly cleavage-preserving squares is the Eilenberg--Moore category
of the underlying ordinary monad of $\C$ on the underlying category of
$\CAT^{\mathbf 2}$.
\end{corollary}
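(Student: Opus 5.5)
The plan is to obtain the corollary directly from \Cref{thm:main}, by passing to underlying 1-categories on both sides of the isomorphism $\Gamma$. Forgetting 2-cells is a functor $(-)_0$ from 2-categories and strict 2-functors to categories and functors. It sends $\CAT^{\mathbf 2}$ to its underlying category, whose objects are functors and whose arrows are strictly commuting squares. It sends the strict 2-monad $(\C,\eta,\mu)$ of \Cref{prop:comma-monad} to an ordinary monad $(\C_0,\eta,\mu)$ on that underlying category. The point is that 2-natural transformations restrict to ordinary natural transformations, and the monad laws, being equalities of 2-natural transformations, remain equalities of their components.

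The next step is to compare the Eilenberg--Moore data. From \Cref{def:strict-em}, an object of $\Alg(\C)$ is an object $P$ with a 1-cell $a\colon\C P\to P$ satisfying $a\eta_P=1_P$ and $a\mu_P=a\C a$. These are equations between 1-cells only, so they say exactly that $(P,a)$ is an Eilenberg--Moore algebra for $\C_0$. Likewise, a strict algebra morphism is a 1-cell $f$ with $fa=b\C f$, which is exactly a $\C_0$-algebra homomorphism. Since $\C_0$ agrees with $\C$ on 1-cells, composition and identities also agree. Hence the underlying category of $\Alg(\C)$ is literally equal to the Eilenberg--Moore category of $\C_0$, and the forgetful functors to the underlying category of $\CAT^{\mathbf 2}$ coincide as well.

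To finish, apply $(-)_0$ to the 2-isomorphism $\Gamma$ of \Cref{thm:main}. By \Cref{def:scofib}, the underlying category of $\SCoFib$ has split cofibrations as objects and strictly cleavage-preserving squares as arrows. Because $\Gamma$ commutes with the forgetful 2-functors, $\Gamma_0$ is an isomorphism of categories over the underlying category of $\CAT^{\mathbf 2}$ onto the Eilenberg--Moore category of $\C_0$, which is the claim.

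I expect no real obstacle here. The only point needing care is that \Cref{thm:main} is used only through its object and 1-cell parts, namely \Cref{prop:object-inverses} and \Cref{prop:one-cells}. \Cref{prop:two-cells} plays no role once 2-cells are discarded.
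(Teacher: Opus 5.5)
Your proposal is correct and follows the paper's proof, which consists of taking underlying categories in the isomorphism $\Gamma$ of \Cref{thm:main}. Your additional checks make that one-line argument explicit: that the underlying category of $\Alg(\C)$ is literally the Eilenberg--Moore category of the underlying ordinary monad, and that only the object and 1-cell parts of the theorem are used.
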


\begin{proof}
Take underlying categories in the isomorphism of
\Cref{thm:main}.
\end{proof}

\section{Normal pseudoalgebras and cloven cofibrations}
\label{sec:pseudoalgebras}

Strict $\C$-algebras encode split transport.  We now identify the
coherent structure obtained when the multiplication equation is replaced
by an invertible 2-cell.  Normality is important: it removes a
base-changing component that is present in an arbitrary pseudoalgebra on
the global arrow 2-category.

\subsection{The concrete pseudoalgebra data}

Let $P\colon E\to B$.  A normal pseudo-$\C$-algebra on $P$ has an
action whose base component is forced to be the identity,
\begin{equation*}
  (A,1_B)\colon\C P\longrightarrow P,
  \qquad PA=\Cod_P,
\end{equation*}
and whose normal unit equation is
\begin{equation}
  AH^P=1_E.
  \label{eq:normal-pseudo-unit}
\end{equation}
The compositor is initially a 2-cell in the arrow 2-category and could
therefore have a base component
$\zeta\colon1_B\Rightarrow1_B$.  Either normal unit axiom, evaluated on
base components, forces $\zeta=1_{1_B}$.  Its total component is thus an
invertible natural transformation over $1_B$,
\begin{equation}
  \chi\colon A\overline A\Longrightarrow AM^P.
  \label{eq:C-pseudo-compositor}
\end{equation}
Writing
\begin{equation*}
  u_!x=A(x,u),
\end{equation*}
the component of \Cref{eq:C-pseudo-compositor} at
$((x,u),v)$ is a vertical isomorphism
\begin{equation}
  \chi^x_{v,u}\colon v_!(u_!x)\xrightarrow{\ \cong\ }(vu)_!x.
  \label{eq:transport-compositor}
\end{equation}
The two pseudoalgebra unit axioms say
\begin{equation}
  \chi^x_{1_b,u}=1_{u_!x},
  \qquad
  \chi^x_{u,1_{Px}}=1_{u_!x}.
  \label{eq:transport-unit-coherence}
\end{equation}
For composable arrows
\begin{equation*}
  Px\xrightarrow{u}b\xrightarrow{v}c\xrightarrow{w}d,
\end{equation*}
the associativity axiom is the commutativity of the transport pentagon
\begin{equation}
\begin{tikzcd}[column sep=huge,row sep=large]
w_!v_!u_!x
  \arrow[r,"{w_!\chi^x_{v,u}}"]
  \arrow[d,"{\chi^{u_!x}_{w,v}}"']
& w_!(vu)_!x
  \arrow[d,"{\chi^x_{w,vu}}"] \\
(wv)_!u_!x
  \arrow[r,"{\chi^x_{wv,u}}"']
& (wvu)_!x .
\end{tikzcd}
\label{eq:transport-pentagon}
\end{equation}
These equations are the usual normal pseudoalgebra triangles and
pentagon specialized to the formulas for $H^P$ and $M^P$.

An arbitrary global pseudoalgebra need not have this form.  Its action
is a square
\begin{equation*}
  (A,R)\colon\C P\longrightarrow P
\end{equation*}
and its unit and multiplication are invertible 2-cells in the arrow
2-category
\begin{equation}
  (\iota,\rho)\colon(1_E,1_B)\Longrightarrow(AH^P,R),
  \qquad
  (\chi,\zeta)\colon(A\overline A,R^2)
    \Longrightarrow(AM^P,R).
  \label{eq:global-pseudoalgebra-cells}
\end{equation}
Their components satisfy the arrow-2-category equations
\begin{equation}
  P\iota=\rho P,
  \qquad
  P\chi=\zeta\,\Cod_{\C P}.
  \label{eq:global-pseudoalgebra-cell-conditions}
\end{equation}
In particular, the base component of the pseudo-unit is an isomorphism
\begin{equation}
  \rho\colon1_B\Longrightarrow R.
  \label{eq:base-twist}
\end{equation}
The base component of its multiplication cell is an isomorphism
\begin{equation*}
  \zeta\colon R^2\Longrightarrow R.
\end{equation*}
The base components of the two unit axioms and the associativity axiom
are
\begin{equation}
  \zeta(\rho R)=1_R=\zeta(R\rho),
  \qquad
  \zeta(\zeta R)=\zeta(R\zeta).
  \label{eq:global-base-coherence}
\end{equation}
Thus $(R,\rho,\zeta)$ is a pseudoalgebra for the identity 2-monad at
$B$.  Since $\rho$ is invertible, either unit equation determines
$\zeta$ from $\rho$; the base action is coherently, but not necessarily
strictly, trivial.
If $u\colon Px\to b$, the arrow obtained from the action and unit is
displayed by the commutative square
\begin{equation}
\begin{tikzcd}[column sep=huge,row sep=large]
x \arrow[r,"{A(1_x,u)\,\iota_x}"] \arrow[d,mapsto]
  & A(x,u) \arrow[d,mapsto] \\
Px \arrow[r,"{Ru\,\rho_{Px}=\rho_bu}"']
  & Rb .
\end{tikzcd}
\label{eq:global-base-twisted-lift}
\end{equation}
Thus its target lies over $Rb$, and the arrow lies over
$Ru\,\rho_{Px}=\rho_bu\colon Px\to Rb$.
It is therefore not a chosen lift over $u$ in the ordinary sense.
Normality makes the unit cell an identity and forces $R=1_B$; this is
why \Cref{eq:normal-pseudo-unit} is available.  The same phenomenon is
analyzed, in the fibrational variance, by
\citet[Remark~2.7(b)]{EmmeneggerEtAl2024}.  Their unrestricted global
pseudoalgebras carry a uniform base isomorphism and give Street
fibrations in the sense of \citet{Street1974}, although the converse
need not hold.  Dually, the
underlying functor here is a Street opfibration (not necessarily an
ordinary Grothendieck cofibration), and the converse need not hold
because the required base isomorphism must be uniform.  We return to
the fixed-base alternative after the global theorem.

\subsection{Canonical coherence of a normal cleavage}

We first record the pseudoalgebra coherence already present in a
cleavage.

\begin{lemma}[Transport coherence]\label{lem:transport-coherence}
Let $P\colon E\to B$ be a normally cloven cofibration.  For composable
$u\colon Px\to b$ and $v\colon b\to c$, there is a unique vertical
isomorphism
\begin{equation}
  \chi^x_{v,u}\colon v_!(u_!x)\xrightarrow{\ \cong\ }(vu)_!x
  \label{eq:cleavage-compositor}
\end{equation}
satisfying
\begin{equation}
  \chi^x_{v,u}\,
  \delta^{u_!x}_{v}\,
  \delta^x_u
  =\delta^x_{vu}.
  \label{eq:cleavage-compositor-characterization}
\end{equation}
The isomorphisms are natural and satisfy
\Cref{eq:transport-unit-coherence,eq:transport-pentagon}.
Consequently, the functor $L^P$ of
\Cref{lem:action-from-cleavage}, together with these cells, is a normal
pseudo-$\C$-algebra.
\end{lemma}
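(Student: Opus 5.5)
The plan is to obtain everything from cocartesian uniqueness, using repeatedly that a composite of cocartesian arrows is cocartesian. First, existence and uniqueness of $\chi^x_{v,u}$. The composite $\delta^{u_!x}_{v}\delta^x_u$ is cocartesian, because composites of cocartesian arrows are cocartesian. It lies over $vu$, as does $\delta^x_{vu}$. Two cocartesian arrows with common domain over the same base arrow differ by a unique vertical isomorphism: apply \Cref{eq:cocartesian-up} with $v=1_c$ in both directions, and use uniqueness to see that the two comparison arrows are mutually inverse. This gives $\chi^x_{v,u}$ together with its characterization \Cref{eq:cleavage-compositor-characterization}.

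Second, the unit conditions \Cref{eq:transport-unit-coherence}. By normality, $\delta^{u_!x}_{1_b}=1$ and $\delta^x_{1_{Px}}=1_x$. Hence in each case the identity of $u_!x$ satisfies the characterizing equation, so uniqueness of the vertical comparison gives $\chi=1$.

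Third, the pentagon \Cref{eq:transport-pentagon}. Both paths are vertical arrows $w_!v_!u_!x\to(wvu)_!x$. I will show that both give $\delta^x_{wvu}$ after precomposition with the cocartesian composite $\delta^{v_!u_!x}_w\delta^{u_!x}_v\delta^x_u$; cocartesian uniqueness (over $1_d$) then forces them to agree.
\begin{itemize}
\item For the lower path, use $\chi^{u_!x}_{w,v}\delta_w\delta_v=\delta^{u_!x}_{wv}$ and then $\chi^x_{wv,u}\delta^{u_!x}_{wv}\delta^x_u=\delta^x_{wvu}$.
\item For the upper path, use the defining property \Cref{eq:pushforward-arrow} of $w_!$ on the vertical arrow $\chi^x_{v,u}$, namely $(w_!\chi^x_{v,u})\delta^{v_!u_!x}_w=\delta^{(vu)_!x}_w\chi^x_{v,u}$. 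This turns the composite into $\chi^x_{w,vu}\delta^{(vu)_!x}_w\delta^x_{vu}=\delta^x_{wvu}$.
\end{itemize}

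Fourth, naturality and identification with pseudoalgebra data. The components $\chi^x_{v,u}$ should assemble into a natural transformation $L^P\overline{L^P}\Rightarrow L^PM^P$ over $1_B$. Take an arrow $\Xi=((f,r),s)\colon((x,u),v)\to((y,w),z)$ of $\C^2P$. The naturality square has both composites over $s$. Precomposing each with the cocartesian arrow $\delta^{u_!x}_v\delta^x_u$, the diagram \Cref{eq:LP-multiplication-diagram} (which uses only the defining property of $L^P$, not splitness) together with the characterization of $\chi$ reduces both sides to $\delta^y_{zw}f$. Uniqueness then gives naturality. Since $\chi$ lies over identities, $(\chi,1)$ is a 2-cell in $\CAT^{\mathbf 2}$.

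It remains to note two points. Normality gives $L^PH^P=1_E$ by \Cref{eq:LH}. The abstract axioms \Cref{eq:pseudoalgebra-unit-axioms,eq:pseudoalgebra-associativity-axiom}, when evaluated componentwise through the formulas for $H^P$ and $M^P$, are exactly \Cref{eq:transport-unit-coherence,eq:transport-pentagon}, as stated in the preceding subsection.

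The main obstacle I expect is this last bookkeeping step: matching the whiskerings $\chi\,\C\mu_P$, $L^P\C\chi$, and so on with the concrete pentagon edges. In particular, the whiskering $L^P\overline{\chi}$ has to evaluate to $w_!\chi^x_{v,u}$. This requires checking that $L^P$ applied to the comma arrow $(\chi^x_{v,u},1_d)$ is exactly the pushforward $w_!\chi^x_{v,u}$, which follows from comparing \Cref{lem:action-from-cleavage} with \Cref{eq:pushforward-arrow}.
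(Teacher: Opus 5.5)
Your proposal is correct and follows essentially the same route as the paper. You obtain $\chi^x_{v,u}$ by cocartesian uniqueness for the composite $\delta^{u_!x}_v\delta^x_u$, prove naturality by precomposing with that composite, get the unit equations from normality, and prove the pentagon by precomposing with the triple composite of lifts; your explicit use of \Cref{eq:pushforward-arrow} to identify $L^P(\chi^x_{v,u},1_d)$ with $w_!\chi^x_{v,u}$ just spells out a step the paper leaves implicit in its ``repeated use'' of the characterization.
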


\begin{proof}
The composite of two cocartesian arrows is cocartesian.  Indeed, a map
out of its domain is first factored uniquely through $\delta^x_u$ and
then uniquely through $\delta^{u_!x}_v$; the two uniqueness statements
also give uniqueness for the composite.  Hence the two arrows
\begin{equation*}
  \delta^{u_!x}_{v}\delta^x_u,
  \qquad
  \delta^x_{vu}
\end{equation*}
are cocartesian over the same base arrow $vu$ and have the same domain.
The universal property of the first gives the arrow in
\Cref{eq:cleavage-compositor}; the universal property of the second
gives an arrow in the reverse direction.  Their composites are
identities by cocartesian uniqueness, so $\chi^x_{v,u}$ is invertible.
The defining relation is the commutative transport triangle
\begin{equation}
\begin{tikzcd}[column sep=large,row sep=large]
x \arrow[r,"\delta^x_u"]
  \arrow[drr,bend right=20,"\delta^x_{vu}"']
& u_!x \arrow[r,"\delta^{u_!x}_v"]
& v_!u_!x \arrow[d,"{\chi^x_{v,u}}"] \\
&& (vu)_!x .
\end{tikzcd}
\label{eq:cleavage-compositor-triangle}
\end{equation}

For an arrow
$\Xi=((f,r),s)\colon((x,u),v)\to((y,w),z)$ of $\C^2P$, naturality is
the commutativity of
\begin{equation}
\begin{tikzcd}[column sep=huge,row sep=large]
v_!u_!x \arrow[r,"\chi^x_{v,u}"]
  \arrow[d,"{L^P(L^P(f,r),s)}"']
& (vu)_!x \arrow[d,"{L^P(f,s)}"] \\
z_!w_!y \arrow[r,"\chi^y_{z,w}"']
& (zw)_!y .
\end{tikzcd}
\label{eq:cleavage-compositor-naturality}
\end{equation}
After precomposition with
$\delta^{u_!x}_v\delta^x_u$, both paths become
$\delta^y_{zw}f$ by the defining equations of $L^P$ and
\Cref{eq:cleavage-compositor-characterization}.  Cocartesian uniqueness
therefore proves \Cref{eq:cleavage-compositor-naturality}.  Normality gives the
two equations in \Cref{eq:transport-unit-coherence}.  In the pentagon
\Cref{eq:transport-pentagon}, precompose both paths with
\begin{equation*}
  \delta^{v_!u_!x}_w\,
  \delta^{u_!x}_v\,
  \delta^x_u.
\end{equation*}
Repeated use of
\Cref{eq:cleavage-compositor-characterization} turns either path into
$\delta^x_{wvu}$.  Since the displayed composite of lifts is
cocartesian, the two paths are equal.  Thus the normal unit and
associativity axioms hold.  The action unit is
$L^PH^P=1_E$ by normality and \Cref{lem:action-from-cleavage}, so
$(L^P,\chi)$ is a normal pseudoalgebra.
\end{proof}

\begin{definition}\label{def:nclcofib}
The 2-category $\NClCoFib$ has normally cloven cofibrations as objects.
A 1-cell $(T,S)\colon P\to Q$ is a commutative square for which $T$
preserves cocartesian arrows.  Equivalently, it is enough that every
designated arrow $\delta^x_u$ be sent to a $Q$-cocartesian arrow.
All 2-cells of $\CAT^{\mathbf 2}$ between such squares are admitted.
The evident forgetful 2-functor to $\CAT^{\mathbf 2}$ is locally fully
faithful.
\end{definition}

For the equivalence in \Cref{def:nclcofib}, observe that any
cocartesian arrow over $u$ is a designated lift followed by a vertical
isomorphism.  Thus preservation of designated lifts as cocartesian
arrows implies preservation of all cocartesian arrows.

\subsection{The normal pseudoalgebra comparison}

We next reconstruct a normal cleavage from a pseudoaction.  The proof
also gives the adjunction-to-the-unit form of lax idempotence in this
particular case; compare \citet[Sections~2.27--2.30]{Street1980} and
\citet{Kock1995}.

\begin{lemma}[Pseudo-absorption]\label{lem:pseudo-absorption}
Let $(P,A,\chi)$ be a normal pseudo-$\C$-algebra.  Put
\begin{equation}
  u_!x=A(x,u),
  \qquad
  \delta^x_u=A(1_x,u).
  \label{eq:pseudo-delta-from-action}
\end{equation}
Then
\begin{equation}
  A(\delta^x_u,1_b)=1_{u_!x}.
  \label{eq:pseudo-absorption}
\end{equation}
Consequently $(A,1_B)\dashv(H^P,1_B)$ in
$\CAT^{\mathbf 2}$, with identity counit and unit
\begin{equation*}
  (\delta^x_u,1_b)\colon(x,u)\longrightarrow H^P(u_!x).
\end{equation*}
In particular, every $\delta^x_u$ is $P$-cocartesian.
\end{lemma}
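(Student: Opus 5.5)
The plan is to run the strict absorption argument of \Cref{lem:absorption} again, with the multiplication equation replaced by naturality of the compositor $\chi$ of \Cref{eq:C-pseudo-compositor}. Once \Cref{eq:pseudo-absorption} is established, the remaining assertions should follow by the arguments already given for strict algebras. Those arguments used only the unit law $AH^P=1_E$, functoriality of $A$, and absorption, and all three are available for a normal pseudoalgebra.

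\emph{Step 1: absorption.} Write $a=Px$ and consider the same test arrow as before,
\begin{equation*}
  \varpi_{x,u}=((1_x,u),1_b)\colon((x,1_a),u)\longrightarrow((x,u),1_b)
\end{equation*}
in $\C^2P$. I will evaluate both functors $A\overline A$ and $AM^P$ on it.
\begin{itemize}
\item Since $\overline A((f,r),s)=(A(f,r),s)$, normality $AH^P=1_E$ gives $A\overline A$ of the source as $A(x,u)=u_!x$, and of the target as $A(H^P(u_!x))=u_!x$.
\item The arrow $A\overline A(\varpi_{x,u})$ is $A(\delta^x_u,1_b)$.
\item $M^P$ sends both endpoints to $(x,u)$ and sends $\varpi_{x,u}$ to $1_{(x,u)}$, so $AM^P(\varpi_{x,u})=1_{u_!x}$.
\end{itemize}
The naturality square of $\chi$ at $\varpi_{x,u}$ then reads
\begin{equation*}
  \chi^x_{1_b,u}\,A(\delta^x_u,1_b)=1_{u_!x}\,\chi^x_{u,1_a}.
\end{equation*}
By the unit coherences \Cref{eq:transport-unit-coherence}, both $\chi^x_{1_b,u}$ and $\chi^x_{u,1_a}$ are identities, and \Cref{eq:pseudo-absorption} follows.

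The step needing the most care is checking that the components of $\chi$ at the source $((x,1_a),u)$ and at the target $((x,u),1_b)$ really are the two cells named in \Cref{eq:transport-unit-coherence}. The first is $\chi^x_{u,1_{Px}}$ and the second is $\chi^x_{1_b,u}$. These come from the two pseudoalgebra unit axioms with $\iota$ an identity, specialized through $M^PH^{\C P}=1$ and $M^P\overline{H^P}=1$.

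\emph{Step 2: the adjunction.} I will repeat the proof of \Cref{cor:algebra-adjunction} verbatim. Let $(f,v)\colon(x,u)\to(y,w)$ be a comma arrow. Since $(f,wPf)=(1_y,w)H^P(f)=(f,v)(1_x,u)$, functoriality and the unit law give
\begin{equation*}
  A(f,v)\,\delta^x_u=\delta^y_w f.
\end{equation*}
This is naturality of $(\delta^x_u,1_b)$, which lies over $1_B$. The counit is the identity because $AH^P=1_E$. The triangle at $H^Px$ holds because $\delta^x_{1_a}=AH^P(1_x)=1_x$. The other triangle is exactly \Cref{eq:pseudo-absorption}.

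\emph{Step 3: cocartesianness.} I will copy the first paragraph of the proof of \Cref{prop:algebra-gives-splitting}. Suppose $Pg=w'u$ for some $w'\colon b\to Pz$. Then $\widehat g=A(g,w')$ satisfies $\widehat g\,\delta^x_u=AH^P(g)=g$. For uniqueness, let $h$ be any arrow over $w'$ with $h\delta^x_u=g$. Then
\begin{equation*}
  h=AH^P(h)\,A(\delta^x_u,1_b)=A(g,w')=\widehat g,
\end{equation*}
where the first equality uses absorption. Neither step involves $\chi$, so no coherence beyond Step 1 is needed.
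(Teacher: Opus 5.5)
Your proposal is correct and matches the paper's proof essentially step for step. As in the paper, naturality of $\chi$ at the test arrow $\varpi_{x,u}$ yields absorption, since both vertical components ($\chi^x_{u,1_{Px}}$ and $\chi^x_{1_b,u}$) are identities by the normal unit coherences; the adjunction and cocartesianness then follow exactly as in the strict case, using only functoriality of $A$, the unit law $AH^P=1_E$, and absorption.
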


\begin{proof}
Consider the arrow $\varpi_{x,u}$ of
\Cref{eq:absorption-test-arrow}.  The two functors in
\Cref{eq:C-pseudo-compositor} send it as indicated in
\begin{equation*}
\begin{tikzcd}[column sep=huge,row sep=large]
A\overline A((x,1_{Px}),u)=u_!x
  \arrow[r,"{A(\delta^x_u,1_b)}"]
  \arrow[d,"\chi"']
& A\overline A((x,u),1_b)=u_!x
  \arrow[d,"\chi"] \\
AM^P((x,1_{Px}),u)=u_!x
  \arrow[r,"1_{u_!x}"']
& AM^P((x,u),1_b)=u_!x .
\end{tikzcd}
\end{equation*}
The vertical components are identities by the two normal unit axioms
\Cref{eq:transport-unit-coherence}; the bottom arrow is an identity
because $M^P(\varpi_{x,u})=1_{(x,u)}$.  Naturality of $\chi$ therefore
gives \Cref{eq:pseudo-absorption}.

Functoriality of $A$ gives the naturality of the proposed adjunction
unit, exactly as in \Cref{eq:algebra-unit-naturality}.  The normal unit
law in \Cref{eq:normal-pseudo-unit} gives the identity counit.  The two
triangle identities are normality at $H^Px$ and
\Cref{eq:pseudo-absorption}, respectively.

Suppose $g\colon x\to z$ and $w\colon b\to Pz$ satisfy $Pg=wu$.  The
comma arrow $(g,w)\colon(x,u)\to H^Pz$ gives
$\widehat g=A(g,w)$, and
\begin{equation*}
\begin{tikzcd}[column sep=large,row sep=large]
x \arrow[r,"\delta^x_u"] \arrow[dr,"g"']
& u_!x \arrow[d,dashed,"\widehat g"] \\
& z
\end{tikzcd}
\qquad P\widehat g=w.
\end{equation*}
Functoriality and \Cref{eq:normal-pseudo-unit} give
$\widehat g\delta^x_u=g$.  If $h$ is another such arrow, then
\begin{equation*}
  h=AH^P(h)A(\delta^x_u,1_b)=A(g,w)=\widehat g
\end{equation*}
by \Cref{eq:pseudo-absorption}.  This is the universal property in
\Cref{eq:cocartesian-up}.
\end{proof}

The component of the pseudoalgebra cell of a normal pseudoaction is the
canonical transport comparison.  More precisely, applying naturality of
$\chi$ to the arrow of $\C^2P$ used in the proof of
\Cref{prop:algebra-gives-splitting}, and then using
\Cref{eq:pseudo-absorption}, gives
\begin{equation}
  \chi^x_{v,u}\,
  \delta^{u_!x}_{v}\,
  \delta^x_u=\delta^x_{vu}.
  \label{eq:pseudo-compositor-characterization}
\end{equation}
Thus it agrees with the unique comparison in
\Cref{lem:transport-coherence}.

\begin{theorem}[Normal pseudoalgebras]\label{thm:normal-pseudo}
There is an isomorphism of 2-categories over $\CAT^{\mathbf 2}$
\begin{equation}
  \Gamma_{\mathrm n}\colon
  \NClCoFib\xrightarrow{\ \cong\ }\PsAlg_{\mathrm n}(\C).
  \label{eq:normal-pseudo-comparison}
\end{equation}
It sends a normally cloven cofibration to the action and compositor of
\Cref{lem:action-from-cleavage,lem:transport-coherence}.  Under this
isomorphism:
\begin{enumerate}[label=\textup{(\roman*)},leftmargin=2.2em]
\item pseudomorphisms are the squares whose total functors
      preserve cocartesian arrows;
\item strict pseudoalgebra morphisms are the squares that preserve the
      designated lifts strictly;
\item every arrow-2-category 2-cell between pseudomorphisms is an
      algebra transformation.
\end{enumerate}
\end{theorem}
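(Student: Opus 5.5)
The proof will mirror that of \Cref{thm:main}, one dimension at a time, with the compositor playing the role of the strict multiplication law.

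\emph{Objects.} \Cref{lem:transport-coherence} sends a normal cleavage to a normal pseudoalgebra $(L^P,\chi)$. Conversely, for a normal pseudoalgebra $(P,A,\chi)$ we define $u_!x=A(x,u)$ and $\delta^x_u=A(1_x,u)$. By \Cref{lem:pseudo-absorption} these arrows are cocartesian, and $\delta^x_{1_{Px}}=AH^P(1_x)=1_x$ by \Cref{eq:normal-pseudo-unit}, so the cleavage is normal. We then check that the two passages are mutually inverse. Starting from a cleavage, $L^P(1_x,u)=\delta^x_u$ by normality, exactly as in \Cref{prop:object-inverses}. Starting from an action, \Cref{eq:A-characterization} holds verbatim because it uses only functoriality and the normal unit law, so $A=L^P$ by cocartesian uniqueness. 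The compositors also agree, since \Cref{eq:pseudo-compositor-characterization} shows that $\chi$ satisfies the characterization \Cref{eq:cleavage-compositor-characterization}, which determines it uniquely.

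\emph{1-cells.} Let $(T,S)\colon P\to Q$ preserve cocartesian arrows. Then $T\delta^x_u$ and $\delta^{Tx}_{Su}$ are both cocartesian over $Su$ with domain $Tx$. Hence there is a unique vertical isomorphism $\overline T_{(x,u)}\colon(Su)_!Tx\to T(u_!x)$ with $\overline T_{(x,u)}\delta^{Tx}_{Su}=T\delta^x_u$. This defines the comparison $L^Q\overline T\Rightarrow TL^P$, whose base component is the identity.

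All axioms are verified by one device: precompose with a composite of designated lifts, which is cocartesian, and invoke uniqueness. For naturality in a comma arrow $(f,v)$, both paths precomposed with $\delta^{Tx}_{Su}$ give $T(\delta^y_wf)$. The unit axiom \Cref{eq:pseudomorphism-unit-axiom} holds because at $u=1$ both lifts are identities. For the multiplication axiom \Cref{eq:pseudomorphism-multiplication-axiom}, precompose both sides with $\delta^{(Su)_!Tx}_{Sv}\delta^{Tx}_{Su}$ and reduce each to $T\delta^x_{vu}$ using \Cref{eq:cleavage-compositor-characterization} in $P$ and in $Q$. Uniqueness of $\overline T$ is forced: evaluating the pseudomorphism's naturality at $(1_x,u)\colon H^Px\to(x,u)$ shows that any admissible cell satisfies the defining equation.

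Conversely, given a pseudomorphism $(T,S,\overline T)$, the same naturality square gives $T\delta^x_u=\overline T_{(x,u)}\delta^{Tx}_{Su}$. This is a cocartesian arrow followed by an isomorphism, hence cocartesian. By the remark after \Cref{def:nclcofib}, $T$ then preserves all cocartesian arrows, which proves (i). For (ii), $\overline T$ is an identity exactly when $T\delta^x_u=\delta^{Tx}_{Su}$, and this matches \Cref{prop:one-cells}.

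\emph{2-cells.} For (iii), take any arrow-2-cell $(\beta,\alpha)$. Both sides of \Cref{eq:pseudoalgebra-transformation-general} at $(x,u)$ are arrows over $\alpha_b$ out of $(Su)_!Tx$. Precomposed with $\delta^{Tx}_{Su}$, both become $T'\delta^x_u\,\beta_x$, using naturality of $\beta$ as in \Cref{prop:two-cells}, so cocartesian uniqueness gives equality.

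\emph{Assembly.} The assignments fix underlying squares and 2-cells. They respect composition, because the comparison of a composite pseudomorphism is again characterized by the lift equation and is therefore the canonical one. This yields the inverse 2-functor over $\CAT^{\mathbf 2}$.

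The main obstacle is the multiplication axiom for $\overline T$, where the pasting conventions and whiskerings must be tracked carefully. I would display that verification as a single diagram precomposed with the composite of two designated lifts, so that cocartesian uniqueness does all the work.
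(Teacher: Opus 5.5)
Your proposal is correct and follows the paper's proof essentially step for step. You match objects through the transport-coherence and pseudo-absorption lemmas plus cocartesian uniqueness, and you take the pseudomorphism comparison to be the unique vertical isomorphism filling the lift triangle. You check naturality, the unit and multiplication axioms, and the algebra-transformation equation by precomposing with designated lifts or their composites, and you obtain compatibility with composition from the fact that every comparison cell is characterized by its lift triangle.

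The one point the paper makes explicit that you leave implicit is that a pseudomorphism cell is a priori an arrow-2-category 2-cell with a possible base component $S\Rightarrow S$. The normal unit axiom forces this base component to be $1_S$ and the component at $H^Px$ to be $1_{Tx}$. Your uniqueness claim and your converse for (i) rely on exactly this.
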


\begin{proof}
On objects, \Cref{lem:transport-coherence} constructs a normal
pseudoalgebra from a normal cleavage, while
\Cref{lem:pseudo-absorption} constructs a normal cleavage from a normal
pseudoalgebra.  Starting with a cleavage, the recovered designated
arrows are
\begin{equation*}
  L^P(1_x,u)=\delta^x_u.
\end{equation*}
Conversely, let $(A,\chi)$ be a normal pseudoaction.  For every comma
arrow $(f,v)\colon(x,u)\to(y,w)$, functoriality gives
\begin{equation*}
  A(f,v)\delta^x_u=\delta^y_wf.
\end{equation*}
Since $\delta^x_u$ is cocartesian by
\Cref{lem:pseudo-absorption}, this equation characterizes
$A(f,v)=L^P(f,v)$.  The actions therefore agree as functors, and
\Cref{eq:pseudo-compositor-characterization} identifies their
structural cells.  The object assignments are inverse.

Let $(T,S)\colon P\to Q$ be a commutative square, and write $A$ and
$D$ for the two actions.  Any pseudomorphism structure on this square
is initially a 2-cell in $\CAT^{\mathbf 2}$ with a possible base
component $\sigma\colon S\Rightarrow S$.  Normal unit coherence forces
$\sigma=1_S$ and forces its total component at $H^Px$ to be
$1_{Tx}$.  We may therefore write the comparison as the natural
transformation over $1_S$
\begin{equation}
  \theta\colon D\overline T\Longrightarrow TA.
  \label{eq:C-pseudomorphism-comparison}
\end{equation}
Naturality of $\theta$ at the comma arrow
$(1_x,u)\colon H^Px\to(x,u)$ then gives
\begin{equation}
\begin{tikzcd}[column sep=huge,row sep=large]
Tx \arrow[r,"\delta^{Tx}_{Su}"]
  \arrow[dr,"T\delta^x_u"']
& (Su)_!Tx \arrow[d,"{\theta_{x,u}}"] \\
& T(u_!x).
\end{tikzcd}
\label{eq:pseudomorphism-lift-triangle}
\end{equation}
If $\theta$ is invertible, $T\delta^x_u$ is an isomorphism after a
$Q$-cocartesian arrow and is therefore cocartesian.  Hence $T$
preserves cocartesian arrows.

Conversely, suppose that $T$ preserves cocartesian arrows.  Both
$\delta^{Tx}_{Su}$ and $T\delta^x_u$ are cocartesian over $Su$ with
domain $Tx$.  There is a unique vertical isomorphism
\begin{equation*}
  \theta_{x,u}\colon(Su)_!Tx\xrightarrow{\ \cong\ }T(u_!x)
\end{equation*}
making \Cref{eq:pseudomorphism-lift-triangle} commute.  Naturality
is the commutativity, for every comma arrow
$(f,v)\colon(x,u)\to(y,w)$, of
\begin{equation}
\begin{tikzcd}[column sep=huge,row sep=large]
(Su)_!Tx \arrow[r,"{\theta_{x,u}}"]
  \arrow[d,"{L^Q(Tf,Sv)}"']
  & T(u_!x) \arrow[d,"{T L^P(f,v)}"] \\
(Sw)_!Ty \arrow[r,"{\theta_{y,w}}"']
  & T(w_!y).
\end{tikzcd}
\label{eq:pseudomorphism-naturality-square}
\end{equation}
Indeed, after precomposition with $\delta^{Tx}_{Su}$, both paths are
$T\delta^y_w\,Tf$; cocartesian uniqueness gives the square.

\begin{samepage}
\noindent
The multiplication axiom is expressed by the commutative diagram
\begin{equation}
\begin{tikzcd}[column sep=large,row sep=large]
(Sv)_!(Su)_!Tx
  \arrow[r,"\chi^Q"]
  \arrow[d,"{(Sv)_!\theta_{x,u}}"']
& (S(vu))_!Tx
  \arrow[d,"{\theta_{x,vu}}"] \\
(Sv)_!T(u_!x)
  \arrow[d,"{\theta_{u_!x,v}}"']
& T((vu)_!x) \\
T(v_!u_!x)
  \arrow[r,"T\chi^P"']
& T((vu)_!x) \arrow[u,equal]
\end{tikzcd}
\label{eq:pseudomorphism-composition}
\end{equation}
where the base functor $S$ is suppressed from the subscripts.
\end{samepage}
After precomposition with the chosen composite lift, both paths become
$T\delta^x_{vu}$; cocartesian uniqueness proves the equality.  The
specialization of \Cref{eq:pseudomorphism-lift-triangle} to
$u=1_{Px}$, together with normality, gives
$\theta_{x,1_{Px}}=1_{Tx}$, which is precisely the pseudomorphism unit
axiom.  Thus $\theta$ is the unique pseudomorphism structure on
$(T,S)$.  It is an identity if and only if the designated lifts are
preserved strictly, proving
Parts~\textup{(i)} and \textup{(ii)}.

Finally, let
$(\beta,\alpha)\colon(T,S)\Rightarrow(T',S')$ be a 2-cell of
$\CAT^{\mathbf 2}$.  The algebra-transformation equation at $(x,u)$ is
\begin{equation}
  \beta_{u_!x}\theta_{x,u}
  =\theta'_{x,u}D(\beta_x,\alpha_b).
  \label{eq:pseudo-algebra-transformation}
\end{equation}
After precomposition with $\delta^{Tx}_{Su}$, both sides reduce, by
\Cref{eq:pseudomorphism-lift-triangle} and naturality of $\beta$, to
$T'\delta^x_u\,\beta_x$.  Since the chosen lift is cocartesian,
\Cref{eq:pseudo-algebra-transformation} follows.  The object, 1-cell,
and 2-cell assignments preserve identities and compositions because
all comparison cells are uniquely characterized by their lift
triangles.  They are therefore inverse 2-functors over
$\CAT^{\mathbf 2}$.
\end{proof}

\begin{remark}\label{rem:noncanonical-pseudomorphisms}
The morphism criterion in \Cref{thm:normal-pseudo} also has a
non-canonical-isomorphism formulation.  A lax morphism has a cell of the
form displayed in \Cref{eq:C-pseudomorphism-comparison}, without the requirement that
it be invertible.  Since $\C$ is lax idempotent,
\citet[Corollary~3.5]{LucatelliNunes2019} implies that its canonical
structure cell is invertible whenever there exists an invertible
2-cell of the prescribed underlying shape, even if that cell is not the
canonical algebra comparison.  By \Cref{thm:normal-pseudo},
invertibility of the canonical comparison is equivalent to preservation of
cocartesian arrows.  Thus the non-canonical-isomorphism criterion detects
when a lax morphism of the transport pseudoalgebras is a
pseudomorphism.
\end{remark}

\begin{theorem}[Fixed-base pseudoalgebras]
\label{thm:fixed-base-pseudo}
Let $\C_B$ be the restriction of $\C$ to $\CAT/B$, and let
$\ClCoFib(B)$ be the 2-category of cloven cofibrations over $B$,
cocartesian-arrow-preserving functors over $B$, and natural
transformations over $1_B$.  There is an isomorphism of 2-categories
over $\CAT/B$
\begin{equation}
  \ClCoFib(B)\xrightarrow{\ \cong\ }\PsAlg(\C_B).
  \label{eq:fixed-base-pseudo-comparison}
\end{equation}
It restricts to the isomorphism between normally cloven cofibrations and
normal pseudoalgebras.  Under the isomorphism in
\Cref{eq:fixed-base-pseudo-comparison},
strict pseudoalgebra morphisms are the functors preserving the
chosen lifts strictly.
\end{theorem}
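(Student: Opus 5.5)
The plan is to repeat the proof of \Cref{thm:normal-pseudo} inside $\CAT/B$, now allowing a nontrivial pseudo-unit. In $\CAT/B$ every 1-cell and 2-cell lies over $1_B$. An action is therefore a functor $A\colon P\!\downarrow\!B\to E$ with $PA=\Cod_P$, and the unit and compositor are vertical natural isomorphisms
\begin{equation*}
  \iota\colon 1_E\Rightarrow AH^P,
  \qquad
  \chi\colon A\overline A\Rightarrow AM^P .
\end{equation*}
The base twist of the global case, \Cref{eq:base-twist}, cannot occur here.

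From a cloven cofibration, not necessarily normal, I take $L^P$ from \Cref{lem:action-from-cleavage}. Its pseudo-unit $\iota_x\colon x\to(1_{Px})_!x$ is $\delta^x_{1_{Px}}$ itself. This arrow is cocartesian over an identity, hence a vertical isomorphism. It is natural because $L^P(f,Pf)\,\delta^x_{1}=\delta^y_{1}f$. The compositor $\chi^x_{v,u}$ is characterized exactly as in \Cref{lem:transport-coherence}. Each unit and pentagon axiom is checked by precomposing both sides with a composite of chosen lifts, which is cocartesian, and then applying uniqueness.

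For the converse, I put $u_!x=A(x,u)$ and define the lift as
\begin{equation*}
  \delta^x_u := A(1_x,u)\,\iota_x ,
\end{equation*}
which lies over $u$. The main obstacle is the pseudo-absorption step, the analogue of \Cref{lem:pseudo-absorption}, which shows that $\delta^x_u$ is cocartesian. I would apply naturality of $\chi$ to $\varpi_{x,u}$, as before. The vertical legs are now the components $\chi$ at $((x,1),u)$ and at $((x,u),1)$. The two pseudo-unit axioms express these through $A(\ldots)$ applied to $\iota$-components, namely $A\overline{\iota}$ and $\iota A$. The resulting equation should read
\begin{equation*}
  A(\delta^x_u,1_b)=\iota_{u_!x}^{-1}.
\end{equation*}
Equivalently, $(A,1)\dashv(H^P,1)$ with counit $\iota^{-1}$ and unit $(\delta^x_u,1_b)$. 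Cocartesianness then follows as before. The candidate factorization is $\widehat g=\iota_z^{-1}A(g,w)$. Uniqueness of a competitor $h$ comes from
\begin{equation*}
  h=\iota_z^{-1}\,AH^P(h)\,\iota_{u_!x},
\end{equation*}
which uses naturality of $\iota$. Getting the bookkeeping of $\iota$ correct here is the delicate computation.

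It remains to check that the two constructions are mutually inverse. Starting from a cleavage, the recovered lift is $L^P(1_x,u)\,\delta^x_{1}=\delta^x_u$ by the defining property of $L^P$, and the recovered $\iota$ is the original one. Starting from an action, $A(f,v)\delta^x_u=\delta^y_wf$ follows from functoriality and naturality of $\iota$. Cocartesian uniqueness then forces $A=L^P$. The recovered $\iota$ and $\chi$ agree with the given cells because both are determined by their lift triangles.

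The treatment of 1-cells and 2-cells transfers verbatim from \Cref{thm:normal-pseudo}. The comparison $\theta_{x,u}$ is the unique vertical isomorphism with $\theta_{x,u}\delta^{Tx}_{u}=T\delta^x_u$. Such a $\theta$ exists, and is invertible, exactly when $T$ preserves cocartesian arrows. The pseudomorphism unit axiom is now the $u=1$ instance of this triangle, no longer an identity statement. Every 2-cell over $1_B$ satisfies the algebra-transformation equation, by the same cancellation against chosen lifts. Strict morphisms correspond to $\theta=1$, that is, to $T\delta^x_u=\delta^{Tx}_u$. Restricting to normal cleavages makes $\iota$ an identity, which recovers the normal case.
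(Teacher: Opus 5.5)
Your proposal follows the paper's proof step for step. It uses the same unit $\iota_x=\delta^x_{1_{Px}}$ and lift-characterized compositor, and the same reconstructed lift $A(1_x,u)\,\iota_x$. It obtains the absorption identity the same way, from naturality of $\chi$ at $\varpi_{x,u}$ together with the two non-normal unit triangles, which yields the adjunction $A\dashv H^P$ with counit $\iota^{-1}$. The 1-cells and 2-cells are handled by the same lift-triangle cancellations.

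Two repairs are needed.

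\emph{The absorption identity.} It must read $A(\delta^x_u,1_b)=\iota_{u_!x}$. Your version with $\iota_{u_!x}^{-1}$ is not well typed, since both $A(\delta^x_u,1_b)$ and $\iota_{u_!x}$ go from $u_!x$ to $AH^P(u_!x)$. The uninverted form is exactly the triangle identity of your own adjunction. It is also what your uniqueness step needs: $h=\iota_z^{-1}AH^P(h)\,\iota_{u_!x}=\iota_z^{-1}A(g,w)$.

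\emph{Recovering $\chi$.} This does not follow from lift-triangle uniqueness alone. You must first check that the given compositor satisfies $\chi^x_{v,u}\delta^{u_!x}_v\delta^x_u=\delta^x_{vu}$. The paper does this using naturality of $\chi$ at $\omega_{x,u,v}$, precomposed with $A(\iota_x,1_{Px})\,\iota_x$, together with the right unit triangle and naturality of $\iota$.
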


\begin{proof}
Start with a cleavage, not assumed normal, and put $A=L^P$.  Define the
pseudoalgebra unit by
\begin{equation}
  \iota_x:=\delta^x_{1_{Px}}\colon x\longrightarrow(1_{Px})_!x
  =AH^Px.
  \label{eq:fixed-base-pseudo-unit}
\end{equation}
It is invertible because a cocartesian arrow over an identity is an
isomorphism.  Naturality is the defining equation for $L^P$ applied to
$H^Pf$.  For composable $u$ and $v$, let $\chi^x_{v,u}$ be the unique
vertical isomorphism characterized by
\begin{equation*}
  \chi^x_{v,u}\delta^{u_!x}_v\delta^x_u=\delta^x_{vu}.
\end{equation*}
The proof of \Cref{lem:transport-coherence} does not use normality for
the existence, naturality, or pentagon of these isomorphisms.  The two
non-normal unit triangles are
\begin{align}
  \chi^x_{1_b,u}\,\iota_{u_!x}&=1_{u_!x},
  \label{eq:fixed-base-left-unit}\\
  \chi^x_{u,1_{Px}}\,L^P(\iota_x,1_b)&=1_{u_!x}.
  \label{eq:fixed-base-right-unit}
\end{align}
The first is the defining comparison equation with the identity
transport at $u_!x$.  For the second, functoriality of $L^P$ gives
\begin{equation}
  L^P(\iota_x,1_b)\delta^x_u
  =\delta^{(1_{Px})_!x}_u\delta^x_{1_{Px}}.
  \label{eq:fixed-base-right-unit-naturality}
\end{equation}
After precomposition with $\delta^x_u$, the left-hand side of
\Cref{eq:fixed-base-right-unit} is therefore $\delta^x_u$, by the
defining equation for $\chi^x_{u,1_{Px}}$.  Since $\delta^x_u$ is
cocartesian, cancellation proves the second unit triangle.  Thus
$(L^P,\iota,\chi)$ is a pseudo-$\C_B$-algebra.

Conversely, let $(A,\iota,\chi)$ be a pseudo-$\C_B$-algebra.  Define
the chosen lift of $u\colon Px\to b$ by
\begin{equation}
  \delta^x_u:=A(1_x,u)\,\iota_x
  \colon x\longrightarrow A(x,u).
  \label{eq:fixed-base-lift-from-pseudoaction}
\end{equation}
Naturality of $\iota$ and functoriality of $A$ give
\begin{equation}
  A(f,v)\delta^x_u=\delta^y_wf
  \label{eq:fixed-base-unit-naturality}
\end{equation}
for every comma arrow $(f,v)\colon(x,u)\to(y,w)$.  Applying naturality
of $\chi$ to $\varpi_{x,u}$ from
\Cref{eq:absorption-test-arrow} gives the commutative square
\begin{equation}
\begin{tikzcd}[column sep=huge,row sep=large]
A(A(x,1_{Px}),u)
  \arrow[r,"{A(A(1_x,u),1_b)}"]
  \arrow[d,"{\chi^x_{u,1_{Px}}}"']
& A(A(x,u),1_b)
  \arrow[d,"{\chi^x_{1_b,u}}"] \\
A(x,u) \arrow[r,equal]
& A(x,u).
\end{tikzcd}
\label{eq:fixed-base-absorption-naturality}
\end{equation}
Equivalently,
\begin{equation*}
  \chi^x_{1_b,u}A(A(1_x,u),1_b)=\chi^x_{u,1_{Px}}.
\end{equation*}
The two non-normal unit triangles say
\begin{equation*}
  \chi^x_{1_b,u}\iota_{A(x,u)}=1_{A(x,u)},
  \qquad
  \chi^x_{u,1_{Px}}A(\iota_x,1_b)=1_{A(x,u)}.
\end{equation*}
Precomposing the equality represented by
\Cref{eq:fixed-base-absorption-naturality} with
$A(\iota_x,1_b)$, and using
$\delta^x_u=A(1_x,u)\iota_x$, shows that both
$A(\delta^x_u,1_b)$ and $\iota_{A(x,u)}$ become the identity after
postcomposition with the invertible cell $\chi^x_{1_b,u}$.
Cancelling that cell gives the generalized absorption identity
\begin{equation}
  A(\delta^x_u,1_b)=\iota_{A(x,u)}.
  \label{eq:fixed-base-pseudo-absorption}
\end{equation}
\Cref{eq:fixed-base-unit-naturality,eq:fixed-base-pseudo-absorption}
give an adjunction in $\CAT/B$
\begin{equation}
  A\dashv H^P
  \label{eq:fixed-base-pseudo-adjunction}
\end{equation}
whose counit is $\iota^{-1}\colon AH^P\Rightarrow1_E$ and whose unit
at $(x,u)$ is $(\delta^x_u,1_b)$.  The first equation gives naturality
of the unit.  The triangle at $A(x,u)$ is
$\iota^{-1}_{A(x,u)}A(\delta^x_u,1_b)=1$, by the second equation; the
triangle at $H^Px$ is
$H^P(\iota_x^{-1})(\delta^x_{1_{Px}},1_{Px})=1$.

To see the cocartesian universal property directly, consider an arrow
\begin{equation*}
  (g,w)\colon(x,u)\longrightarrow H^Pz.
\end{equation*}
It corresponds under
\Cref{eq:fixed-base-pseudo-adjunction} to a unique arrow
$h\colon A(x,u)\to z$ over $w$, and the unit equation says precisely
$h\delta^x_u=g$.  Hence \Cref{eq:fixed-base-lift-from-pseudoaction} is
cocartesian.

The constructions are strict inverses.  Starting from a cleavage, the
lift reconstructed from its action is
\begin{equation*}
  L^P(1_x,u)\delta^x_{1_{Px}}=\delta^x_u.
\end{equation*}
Starting from a pseudoaction, naturality of $\iota$ and functoriality of
$A$ show that, for every comma arrow $(f,v)\colon(x,u)\to(y,w)$,
\begin{equation*}
  A(f,v)\delta^x_u=\delta^y_wf.
\end{equation*}
Cocartesian uniqueness therefore identifies $A$ with the functor $L^P$
reconstructed from the cleavage.  \Cref{eq:fixed-base-pseudo-unit}
recovers $\iota$.  It remains to recover the compositor.  Put
$a=Px$, and let $u\colon a\to b$ and $v\colon b\to c$.  Naturality of
the original compositor at the arrow $\omega_{x,u,v}$ of
\Cref{eq:multiplication-test-arrow} is the commutativity of
\begin{equation}
\begin{tikzcd}[column sep=huge,row sep=large]
A(A(x,1_a),1_a)
  \arrow[r,"{A(A(1_x,u),vu)}"]
  \arrow[d,"{\chi^x_{1_a,1_a}}"']
& A(A(x,u),v)
  \arrow[d,"{\chi^x_{v,u}}"] \\
A(x,1_a)
  \arrow[r,"{A(1_x,vu)}"']
& A(x,vu).
\end{tikzcd}
\label{eq:fixed-base-compositor-naturality}
\end{equation}
Precompose this square with
$A(\iota_x,1_a)\iota_x$.  The lower path becomes
$A(1_x,vu)\iota_x=\delta^x_{vu}$ by the right unit triangle.
For the upper path, naturality of $\iota$ at $\delta^x_u$ gives
\begin{equation*}
  \iota_{A(x,u)}\delta^x_u
  =A(\delta^x_u,u)\iota_x,
\end{equation*}
and hence the upper path, before its final $\chi$-component, is
$\delta^{u_!x}_v\delta^x_u$.  Therefore
\Cref{eq:fixed-base-compositor-naturality} gives
\begin{equation}
  \chi^x_{v,u}\delta^{u_!x}_v\delta^x_u=\delta^x_{vu}.
  \label{eq:fixed-base-recover-compositor}
\end{equation}
The composite $\delta^{u_!x}_v\delta^x_u$ is cocartesian, so
precomposition with it is cancellable among arrows over the prescribed
base.  Hence \Cref{eq:fixed-base-recover-compositor} uniquely determines
$\chi$ and proves that the compositor is also recovered.
It remains to identify 1-cells and 2-cells.  Let $T\colon E\to F$ be a
functor over $B$ between two cloven cofibrations.  If $T$ preserves
cocartesian arrows, the comparison component is the unique vertical
isomorphism
\begin{equation}
  \theta_{x,u}\colon u_!Tx\xrightarrow{\ \cong\ }T(u_!x)
  \label{eq:fixed-base-pseudomorphism-comparison}
\end{equation}
such that
\begin{equation}
  \theta_{x,u}\delta^{Tx}_u=T\delta^x_u.
  \label{eq:fixed-base-pseudomorphism-triangle}
\end{equation}
At $u=1_{Px}$, this reads
$\theta_{x,1_{Px}}\iota_{Tx}=T\iota_x$, which is the pseudomorphism
unit axiom.  For composable $u$ and $v$, precomposing the two sides of
the multiplication axiom with
$\delta^{u_!Tx}_v\delta^{Tx}_u$ reduces both to
$T\delta^x_{vu}$; cocartesian cancellation proves the axiom.  The
same triangle shows conversely that a pseudomorphism sends each chosen
lift, hence every cocartesian arrow, to a cocartesian arrow.

For an algebra transformation $\beta\colon T\Rightarrow T'$, the two
sides of its compatibility equation become
$T'\delta^x_u\,\beta_x$ after precomposition with $\delta^{Tx}_u$.
Cocartesian cancellation therefore proves the equation.  Finally,
$\theta$ is an identity if and only if
$T\delta^x_u=\delta^{Tx}_u$.  Thus pseudomorphisms are the
cocartesian-arrow-preserving functors, algebra transformations are the natural
transformations over $1_B$, and strict morphisms preserve the chosen
lifts strictly.  This proves
\Cref{eq:fixed-base-pseudo-comparison}.  The result is the
cofibrational dual of \citet{Street1974}; compare
\citet[Theorem~2.6]{EmmeneggerEtAl2024}.
\end{proof}

\section{From Grothendieck cofibrations to factorizations via 2-monads}
\label{sec:squaring}

Every arrow in the total category of a cloven cofibration factors as a
chosen cocartesian arrow followed by a vertical arrow.  Taken alone,
this is an elementary consequence of the cocartesian universal
property.  The purpose of this section is stronger: we show that the
passage from Grothendieck transport to these factorizations is induced
by a morphism of 2-monads.  At the strict level it is functorial in
split cofibrations, cleavage-preserving squares, and 2-cells; at the
level of normal pseudoalgebras it is functorial in normally cloven cofibrations,
cocartesian-arrow-preserving squares, and 2-cells.  No algebraic data
are added beyond the comma-monad actions identified in
\Cref{thm:main,thm:normal-pseudo}.

\subsection{The squaring 2-monad}

The walking arrow $\mathbf 2$ is a comonoid in the cartesian monoidal
category $\Cat$: its counit is the unique functor
$\mathbf 2\to\mathbf 1$, and its comultiplication is the diagonal
$\mathbf 2\to\mathbf 2\times\mathbf 2$.  Exponentiation therefore
defines the \emph{squaring 2-monad}
\begin{equation*}
\Sq\colon\CAT\longrightarrow\CAT,
  \qquad
  \Sq E=E^{\mathbf 2}.
\end{equation*}
Its unit
\begin{equation*}
I_E\colon E\longrightarrow E^{\mathbf 2}
\end{equation*}
sends an object to its identity arrow.  Its multiplication
\begin{equation*}
m_E\colon(E^{\mathbf 2})^{\mathbf 2}\longrightarrow E^{\mathbf 2}
\end{equation*}
sends a commutative square to its diagonal:
\begin{equation}
\begin{tikzcd}[column sep=large,row sep=large]
x \arrow[r,"r"] \arrow[d,"f"']
  & x' \arrow[d,"g"] \\
y \arrow[r,"s"']
  & y'
\end{tikzcd}
\qquad\longmapsto\qquad
sf=gr\colon x\longrightarrow y'.
\label{eq:square-multiplication}
\end{equation}
On a morphism of commutative squares, represented by a commutative
cube, $m_E$ retains the map between the two upper-left vertices and the
map between the two lower-right vertices.  Both associativity composites
select the long diagonal of a commutative cube of squares, and the unit
composites insert and then remove identity sides.  Hence the comonoid
equations for $\mathbf 2$ give the 2-monad equations as strict
equalities on objects, arrows, functors, and natural transformations.
A strict $\Sq$-algebra is thus a functor
$F\colon E^{\mathbf 2}\to E$ satisfying
\begin{equation}
FI_E=1_E,
  \qquad
  Fm_E=F(F^{\mathbf 2}).
  \label{eq:square-algebra-laws}
\end{equation}
For an arrow $f\colon x\to y$, functoriality supplies a factorization
\begin{equation}
f=m_f e_f,
  \qquad
  e_f:=F(1_x,f)\colon x\to Ff,
  \qquad
  m_f:=F(f,1_y)\colon Ff\to y.
  \label{eq:sq-factorization}
\end{equation}
Indeed, in $E^{\mathbf 2}$ one has
\begin{equation*}
  (f,f)=(f,1_y)(1_x,f)=I_E(f).
\end{equation*}
Applying $F$ and using $FI_E=1_E$ gives $f=m_fe_f$.
Strict $\Sq$-algebras are the strict factorization algebras of
\citet{KorostenskiTholen1993}.  Their correspondence with strict
factorization systems is also obtained by
\citet{RosebrughWood2002Distributive}; their coherence was studied by
\citet{RosebrughWood2002}, while the lax variant was developed by
\citet{RosickyTholen2002}.

Normal pseudo-$\Sq$-algebras have an equally precise interpretation.
Let $\OFSch$ denote the 2-category whose objects are orthogonal
factorization systems equipped with factorizations
$f=m_fe_f$ chosen so that the factorization of an identity is the pair
of identities, whose 1-cells preserve both factorization classes, and
whose 2-cells are natural transformations.  By
\citet[Theorem~2.4]{KorostenskiTholen1993}, there is an isomorphism of
2-categories
\begin{equation}
  \PsAlg_{\mathrm n}(\Sq)\cong\OFSch.
  \label{eq:sq-pseudo-OFS}
\end{equation}
The action sends $f$ to its chosen middle object; its invertible
multiplication cell is the unique comparison between the iterated and
the chosen one-step factorization.  Under
the isomorphism in \Cref{eq:sq-pseudo-OFS}, pseudomorphisms are the functors
preserving both orthogonal classes.  Strict morphisms preserve the
chosen factorizations strictly.

A strict $\Sq$-algebra should not be confused with a natural, or
algebraic, weak factorization system.  The latter tradition encodes
functorial weak factorizations by pointed lax indexed endofunctors
\citep{JanelidzeTholen1999}, or by compatible comonad and monad
structures over the arrow category \citep{GrandisTholen2006}.  Within
this latter tradition, \citet{ClementinoLopezFranco2016} introduced lax
orthogonal algebraic weak factorization systems on 2-categories.  Here a
single strict $\Sq$-action chooses the middle object and the two factors
of every arrow.  The ordinary orthogonal classes arise only after
closing the strict system under isomorphisms.

A strict morphism $T\colon(E,F)\to(E',G)$ satisfies
$TF=GT^{\mathbf 2}$, and a 2-cell $\beta\colon T\Rightarrow T'$ between
strict morphisms is an algebra 2-cell when
$\beta F=G\beta^{\mathbf 2}$.

\subsection{Changing 2-monads}

The mechanism used below is the strict 2-dimensional version of
restriction of scalars.

\begin{definition}\label{def:colax-monad-morphism}
Let $\mathbb T$ be a strict 2-monad on $\mathcal K$ and let
$\mathbb R$ be a strict 2-monad on $\mathcal L$.  A \emph{colax
morphism of 2-monads} from $\mathbb T$ to $\mathbb R$ consists of a
strict 2-functor $V\colon\mathcal K\to\mathcal L$ and a 2-natural
transformation
\begin{equation*}
  \lambda\colon\mathbb R V\Longrightarrow V\mathbb T
\end{equation*}
such that, for every $X\in\mathcal K$,
\begin{align}
  \lambda_X\eta^{\mathbb R}_{VX}
    &=V\eta^{\mathbb T}_X,
    \label{eq:general-monad-unit}\\
  \lambda_X\mu^{\mathbb R}_{VX}
    &=V\mu^{\mathbb T}_X\,
      \lambda_{\mathbb T X}\,
      \mathbb R\lambda_X.
    \label{eq:general-monad-multiplication}
\end{align}
\end{definition}

The words ``lax'' and ``colax'' are interchanged in parts of the
literature.  The orientation of $\lambda$ and the two displayed
equations are the convention relevant here.

\begin{lemma}[Restriction of scalars]\label{lem:restriction-scalars}
Every colax morphism $(V,\lambda)$ as in
\Cref{def:colax-monad-morphism} induces a strict 2-functor
\begin{equation*}
  \lambda^{*}\colon\Alg(\mathbb T)\longrightarrow\Alg(\mathbb R)
\end{equation*}
over $V$.  It sends a strict action
$a\colon\mathbb T X\to X$ to
\begin{equation}
  Va\,\lambda_X\colon\mathbb R(VX)\longrightarrow VX,
  \label{eq:restricted-action}
\end{equation}
and sends algebra morphisms and algebra 2-cells by applying $V$.
The square
\begin{equation*}
\begin{tikzcd}[column sep=huge,row sep=large]
\Alg(\mathbb T) \arrow[r,"\lambda^{*}"]
  \arrow[d,"U_{\mathbb T}"']
  & \Alg(\mathbb R) \arrow[d,"U_{\mathbb R}"] \\
\mathcal K \arrow[r,"V"'] & \mathcal L
\end{tikzcd}
\end{equation*}
commutes strictly.
\end{lemma}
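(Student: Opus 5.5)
The plan is to verify, in turn, the unit and multiplication laws for the restricted action \Cref{eq:restricted-action}, then the morphism and 2-cell conditions, and finally functoriality and the commuting square. Everything uses only strict 2-functoriality of $V$, 2-naturality of $\lambda$, the algebra equations of \Cref{def:strict-em}, and the two colax equations \Cref{eq:general-monad-unit,eq:general-monad-multiplication}. Throughout, write $b:=Va\,\lambda_X$.

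For the unit law, I would compute
\[
b\,\eta^{\mathbb R}_{VX}=Va\,\lambda_X\eta^{\mathbb R}_{VX}=Va\,V\eta^{\mathbb T}_X=V(a\eta^{\mathbb T}_X)=1_{VX},
\]
using \Cref{eq:general-monad-unit}.

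The multiplication law is the step needing the most care, since $\lambda_X$ has to be pushed past $\mathbb R(Va)$. Start from
\[
b\,\mu^{\mathbb R}_{VX}=Va\,V\mu^{\mathbb T}_X\,\lambda_{\mathbb TX}\,\mathbb R\lambda_X,
\]
which is \Cref{eq:general-monad-multiplication}. The algebra law turns $V(a\mu^{\mathbb T}_X)$ into $Va\,V\mathbb Ta$. Next, 2-naturality of $\lambda$ at the 1-cell $a\colon\mathbb TX\to X$ gives $V\mathbb Ta\,\lambda_{\mathbb TX}=\lambda_X\,\mathbb RVa$. Hence
\[
b\,\mu^{\mathbb R}_{VX}=Va\,\lambda_X\,\mathbb R(Va)\,\mathbb R\lambda_X=b\,\mathbb R(Va\,\lambda_X)=b\,\mathbb Rb,
\]
by strict functoriality of $\mathbb R$. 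This is the only point where naturality of $\lambda$ is used, and it is the main thing to get right in terms of orientation.

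For a strict morphism $f\colon(X,a)\to(Y,a')$, I would show
\[
Vf\,Va\,\lambda_X=V(fa)\lambda_X=V(a'\mathbb Tf)\lambda_X=Va'\,\lambda_Y\,\mathbb RVf,
\]
again using naturality of $\lambda$ at $f$. For an algebra 2-cell $\alpha$, the same computation applies, with 2-naturality of $\lambda$ giving $V\mathbb T\alpha\,\lambda_X=\lambda_Y\,\mathbb RV\alpha$, so that $V\alpha\,b=b'\,\mathbb RV\alpha$.

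Preservation of identities and compositions is inherited from $V$, because $\lambda^*$ acts as $V$ on 1-cells and 2-cells and is defined on objects compatibly. The same description shows $U_{\mathbb R}\lambda^*=VU_{\mathbb T}$ strictly.
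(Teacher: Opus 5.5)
Your proposal is correct and follows the paper's proof essentially step for step. You use the colax unit equation for the unit law. For the multiplication law you use the colax multiplication equation, the algebra law $a\mu^{\mathbb T}_X=a\,\mathbb Ta$, and 2-naturality of $\lambda$ at $a$. The same 2-naturality handles morphisms and 2-cells, and strictness of $V$ gives functoriality and $U_{\mathbb R}\lambda^*=VU_{\mathbb T}$.
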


\begin{proof}
For the unit law, \Cref{eq:general-monad-unit} and the unit law for
$a$ give
\begin{equation*}
  Va\,\lambda_X\eta^{\mathbb R}_{VX}
  =Va\,V\eta^{\mathbb T}_X
  =1_{VX}.
\end{equation*}
For multiplication, use
\Cref{eq:general-monad-multiplication}, the multiplication law for
$a$, and 2-naturality of $\lambda$ at
$a\colon\mathbb T X\to X$:
\begin{align*}
Va\,\lambda_X\mu^{\mathbb R}_{VX}
  &=Va\,V\mu^{\mathbb T}_X
      \lambda_{\mathbb T X}\mathbb R\lambda_X\\
  &=Va\,V(\mathbb T a)
      \lambda_{\mathbb T X}\mathbb R\lambda_X\\
  &=Va\,\lambda_X\mathbb R(Va)\mathbb R\lambda_X\\
  &=(Va\,\lambda_X)\mathbb R(Va\,\lambda_X).
\end{align*}
If $f\colon(X,a)\to(Y,b)$ is a strict $\mathbb T$-morphism, then
2-naturality gives
\begin{equation*}
  Vf\,Va\,\lambda_X
  =Vb\,V\mathbb T f\,\lambda_X
  =Vb\,\lambda_Y\mathbb R(Vf),
\end{equation*}
so $Vf$ is a strict $\mathbb R$-morphism.  The same 2-naturality
equation applied to an algebra 2-cell
$\alpha\colon f\Rightarrow g$ gives the required law explicitly:
\begin{align*}
V\alpha\,(Va\,\lambda_X)
  &=V(\alpha a)\lambda_X
   =V(b\,\mathbb T\alpha)\lambda_X\\
  &=Vb\,V(\mathbb T\alpha)\lambda_X
   =Vb\,\lambda_Y\,\mathbb R(V\alpha).
\end{align*}
Since $V$ is strict, identities and both compositions are preserved on
the nose.  The asserted equality of forgetful 2-functors follows from
the object, 1-cell, and 2-cell formulas.
\end{proof}

\begin{proposition}[Pseudoalgebraic restriction of scalars]
\label{prop:pseudo-restriction-scalars}
Every colax morphism $(V,\lambda)$ of strict 2-monads as in
\Cref{def:colax-monad-morphism} induces a 2-functor
\begin{equation}
  \lambda^*_{\mathrm{ps}}\colon
  \PsAlg(\mathbb T)\longrightarrow\PsAlg(\mathbb R)
  \label{eq:pseudo-restriction-functor}
\end{equation}
over $V$.  It sends a pseudoaction
$(a,\iota,\chi)$ to the action $Va\,\lambda_X$ of
\Cref{eq:restricted-action}.  Its unit cell is $V\iota$, using
\Cref{eq:general-monad-unit}.  Put
$c_X:=Va\lambda_X$ and
$\Lambda_X:=\lambda_{\mathbb T X}\mathbb R\lambda_X$.  Its
multiplication cell is the single composite
\begin{equation}
c_X\mathbb Rc_X
=Va\,V(\mathbb Ta)\Lambda_X
\xRightarrow{\ V\chi\,\Lambda_X\ }
Va\,V\mu^{\mathbb T}_X\Lambda_X
=c_X\mu^{\mathbb R}_{VX}.
 \label{eq:restricted-pseudo-multiplication}
\end{equation}
For a pseudomorphism
$f\colon(X,a)\to(Y,b)$ with comparison
$\overline f\colon b\mathbb T f\Rightarrow fa$, the restricted
comparison is
\begin{equation}
Vb\,\lambda_Y\,\mathbb R(Vf)
=Vb\,V(\mathbb T f)\lambda_X
\xRightarrow{\ V\overline f\,\lambda_X\ }
Vf\,Va\,\lambda_X.
\label{eq:restricted-pseudomorphism}
\end{equation}
Algebra transformations are sent by $V$.  Normal pseudoalgebras are
sent to normal pseudoalgebras.
\end{proposition}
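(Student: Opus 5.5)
The plan follows the strict argument of \Cref{lem:restriction-scalars}. At each stage I would replace the strict algebra equations by the cells of \Cref{def:pseudoalgebras}, transported along $V$ and whiskered by $\lambda$. Because $V$ is a strict 2-functor, it preserves horizontal and vertical composition of 2-cells on the nose. Because $\lambda$ is 2-natural, it commutes strictly with whiskering by 1-cells and 2-cells, that is, $\lambda_Y\mathbb R(V\gamma)=V(\mathbb T\gamma)\lambda_X$ for every 2-cell $\gamma$. Consequently each axiom for the restricted data should turn out to be literally $V$ applied to the corresponding axiom for the original data, whiskered on the right by a 1-cell built from $\lambda$.

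First I check that the proposed cells are well typed. The unit cell $V\iota\colon 1_{VX}\Rightarrow Va\,V\eta^{\mathbb T}_X$ has the correct codomain $c_X\eta^{\mathbb R}_{VX}$ by \Cref{eq:general-monad-unit}. The source of the multiplication cell \Cref{eq:restricted-pseudo-multiplication} is identified by the 2-naturality computation already made in the strict lemma, and its target by \Cref{eq:general-monad-multiplication}. Invertibility is clear. Next I verify the two unit axioms \Cref{eq:pseudoalgebra-unit-axioms}. For instance, the left unit axiom needs $(V\chi\,\Lambda_X\,\mathbb R\eta^{\mathbb R}_{VX})(c_X\,\mathbb R V\iota)=1$. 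I rewrite $\Lambda_X\,\mathbb R\eta^{\mathbb R}_{VX}=\lambda_{\mathbb TX}\mathbb R(V\eta^{\mathbb T}_X)=V(\mathbb T\eta^{\mathbb T}_X)\lambda_X$, using \Cref{eq:general-monad-unit} under $\mathbb R$ together with 2-naturality of $\lambda$. I also rewrite $c_X\,\mathbb RV\iota=Va\,V(\mathbb T\iota)\lambda_X$. Then the composite is $V\bigl((\chi\,\mathbb T\eta_X)(a\,\mathbb T\iota)\bigr)\lambda_X=1$. The other unit axiom follows in the same way, using $\Lambda_X\eta^{\mathbb R}_{\mathbb RVX}=\lambda_{\mathbb TX}\eta^{\mathbb R}_{V\mathbb TX}\lambda_X=V\eta^{\mathbb T}_{\mathbb TX}\lambda_X$.

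The main obstacle is the associativity axiom \Cref{eq:pseudoalgebra-associativity-axiom}. Its four whiskerings, by $\mathbb R\mu^{\mathbb R}$, $\mu^{\mathbb R}_{\mathbb R}$, $\mathbb R^2c_X$ and $c_X\mathbb R$, all have to be pushed through to a single right factor $\Lambda^{(3)}_X:=\lambda_{\mathbb T^2X}\,\mathbb R\lambda_{\mathbb TX}\,\mathbb R^2\lambda_X$. I would first prove two identities: $\Lambda_X\,\mathbb R\mu^{\mathbb R}_{VX}=V(\mathbb T\mu^{\mathbb T}_X)\Lambda^{(3)}_X$ and $\Lambda_X\,\mu^{\mathbb R}_{\mathbb RVX}=V\mu^{\mathbb T}_{\mathbb TX}\Lambda^{(3)}_X$. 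The first comes from \Cref{eq:general-monad-multiplication} under $\mathbb R$ and naturality; the second from \Cref{eq:general-monad-multiplication} at $\mathbb TX$, naturality of $\mu^{\mathbb R}$ at $\lambda_X$, and naturality of $\lambda$. I would also record the whiskering of $\mathbb R^2c_X$. Once these are in place, each side of the restricted pentagon becomes $V$ of the corresponding side of the original pentagon, whiskered by $\Lambda^{(3)}_X$, so the two sides agree.

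The pseudomorphism axioms \Cref{eq:pseudomorphism-unit-axiom,eq:pseudomorphism-multiplication-axiom} for \Cref{eq:restricted-pseudomorphism} reduce in the same way, after whiskering by $\lambda_X$ or $\Lambda_X$ and applying the same identities. For an algebra transformation, \Cref{eq:pseudoalgebra-transformation-general} is $V$ of the original equation whiskered by $\lambda_X$, using $Vb\,\lambda_Y\mathbb R(V\alpha)=Vb\,V(\mathbb T\alpha)\lambda_X$. Preservation of identities and of both compositions holds because the comparison of a composite pseudomorphism is a pasting of whiskered cells, and $V$ and $\lambda$ respect such pastings strictly. Finally, if $\iota$ is an identity then so is $V\iota$, so normal pseudoalgebras are sent to normal ones.
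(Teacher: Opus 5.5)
Your proposal is correct and follows essentially the paper's route. The paper's ``paste with the colax unit equation and two instances of the colax multiplication square'' is exactly your reduction of each restricted axiom to $V$ of the original axiom, whiskered on the right by $\lambda_X$, $\Lambda_X$ or $\Lambda^{(3)}_X$; your identities for $\Lambda_X\,\mathbb R\mu^{\mathbb R}_{VX}$, $\Lambda_X\,\mu^{\mathbb R}_{\mathbb RVX}$ and $\Lambda_X\,\mathbb R^2Va$ check out and simply make that pasting explicit.
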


\begin{proof}
The first equality in
\Cref{eq:restricted-pseudo-multiplication} is 2-naturality of
$\lambda$ at $a$, and the second is
\Cref{eq:general-monad-multiplication}.  Pasting the two pseudoalgebra
unit diagrams for $(a,\iota,\chi)$ with
\Cref{eq:general-monad-unit} gives the two unit diagrams for the
restricted action.  For associativity, paste the associativity diagram
for $\chi$ with the two instances of the colax multiplication square
\begin{equation*}
\begin{tikzcd}[column sep=large,row sep=large]
\mathbb R^2VX
  \arrow[r,"\mathbb R\lambda_X"]
  \arrow[d,"\mu^{\mathbb R}_{VX}"']
& \mathbb R V\mathbb TX
  \arrow[r,"\lambda_{\mathbb TX}"]
& V\mathbb T^2X
  \arrow[d,"V\mu^{\mathbb T}_X"] \\
\mathbb RVX \arrow[rr,"\lambda_X"']
&& V\mathbb TX .
\end{tikzcd}
\end{equation*}
The outside pastings are the two associativity composites for
\Cref{eq:restricted-pseudo-multiplication}; hence they agree.

The unit axiom for a pseudomorphism follows by the same pasting
argument.  Its multiplication axiom follows similarly, using
2-naturality of $\lambda$ at $f$.  If
$\alpha\colon(f,\overline f)\Rightarrow(g,\overline g)$ is an algebra
transformation, its restricted compatibility equation is the
calculation
\begin{align*}
  (V\alpha\,Va\lambda_X)(V\overline f\,\lambda_X)
  &=V((\alpha a)\overline f)\lambda_X\\
  &=V(\overline g(b\mathbb T\alpha))\lambda_X\\
  &=(V\overline g\,\lambda_X)
    (Vb\,\lambda_Y\mathbb R(V\alpha)).
\end{align*}
The last equality uses 2-naturality of $\lambda$ at the 2-cell
$\alpha$, namely
$V(\mathbb T\alpha)\lambda_X=\lambda_Y\mathbb R(V\alpha)$.
These assignments respect
both compositions because $V$ is strict and the displayed cells are
defined by whiskering and pasting.  If $\iota$ is an identity, then so
is $V\iota$, proving the last assertion.

When $V$ is an identity 2-functor, this is the restriction-of-scalars
construction of \citet[Theorem~A.7]{PerroneTholen2022}.  The proof here
includes the relative case needed below, where $V=\Dom$.
\end{proof}

\subsection{The comma-to-squaring comparison}

For every $P\colon E\to B$, define a functor
\begin{equation}
\kappa_P\colon E^{\mathbf 2}\longrightarrow P\!\downarrow\!B
  \label{eq:kappa}
\end{equation}
by
\begin{align}
  \kappa_P(f\colon x\to y)&=(x,Pf),
  \label{eq:kappa-object}\\
  \kappa_P(r,s)&=(r,Ps)
  \label{eq:kappa-arrow}
\end{align}
for a commutative square $(r,s)\colon f\to g$.  The comma condition
for \Cref{eq:kappa-arrow} is
\begin{equation*}
Ps\,Pf=Pg\,Pr,
\end{equation*}
which is the image under $P$ of the square equation $sf=gr$.

\begin{samepage}
\begin{proposition}\label{prop:kappa-monad-comparison}
The functors $\kappa_P$ are the components of a 2-natural
transformation
\begin{equation}
\kappa\colon\Sq\Dom\Longrightarrow\Dom\C.
  \label{eq:kappa-transformation}
\end{equation}
They satisfy the unit and multiplication equations
\begin{align}
  \kappa_P I_E&=H^{P},
  \label{eq:kappa-unit}\\
  \kappa_P m_E
  &=M^{P}\,\kappa_{\C P}\,(\kappa_P)^{\mathbf 2}.
  \label{eq:kappa-multiplication}
\end{align}
Thus $(\Dom,\kappa)$ is a colax morphism from $\C$ to $\Sq$ in the
sense of \Cref{def:colax-monad-morphism}.  It therefore pulls strict
$\C$-algebras back to strict $\Sq$-algebras.
\end{proposition}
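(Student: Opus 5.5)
The proof is componentwise verification; nothing beyond the definitions of $\kappa_P$, $H^P$, $M^P$, $I_E$ and $m_E$ is needed. I would proceed in four steps, with the final conclusion then a direct application of \Cref{lem:restriction-scalars}.

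First, I check that each $\kappa_P$ is a functor. The comma condition was verified after \Cref{eq:kappa-arrow}. Identities and composites of squares are preserved because $\kappa_P$ keeps the top component $r$ and applies the functor $P$ to the bottom component $s$.

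Second, I establish 2-naturality. For a square $(T,S)\colon P\to Q$, the two sides of
\begin{equation*}
\overline T\,\kappa_P=\kappa_Q\,T^{\mathbf 2}
\end{equation*}
agree on an arrow $f\colon x\to y$ of $E$:
\begin{equation*}
\overline T(x,Pf)=(Tx,SPf)=(Tx,QTf)=\kappa_Q(Tf),
\end{equation*}
using $QT=SP$. On a square $(r,s)$ both sides give $(Tr,SPs)=(Tr,QTs)$. For a 2-cell $(\beta,\alpha)$, I compare $\overline\beta\kappa_P$ with $\kappa_Q\beta^{\mathbf 2}$ at $f$. Their components are $(\beta_x,\alpha_{Py})$ and $(\beta_x,Q\beta_y)$, which coincide by \Cref{eq:arrow-2-cell}. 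Here $\Sq\Dom(\beta,\alpha)=\beta^{\mathbf 2}$ has component $(\beta_x,\beta_y)$ at $f$.

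Third, the unit equation \Cref{eq:kappa-unit} is immediate:
\begin{equation*}
\kappa_P(1_x)=(x,1_{Px})=H^Px,
\qquad
\kappa_P(r,r)=(r,Pr)=H^Pr.
\end{equation*}

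Fourth, I verify the multiplication equation \Cref{eq:kappa-multiplication}; this is the only step needing care, so I would unwind the iterated functors explicitly. An object of $(E^{\mathbf 2})^{\mathbf 2}$ is a commutative square $(r,s)\colon f\to g$ as in \Cref{eq:square-multiplication}. The composite acts on it in three stages:
\begin{itemize}
\item $(\kappa_P)^{\mathbf 2}$ sends it to the comma arrow $(r,Ps)\colon(x,Pf)\to(x',Pg)$.
\item $\kappa_{\C P}$, applied to this arrow of $P\!\downarrow\!B$, gives the object $\bigl((x,Pf),\Cod_P(r,Ps)\bigr)=\bigl((x,Pf),Ps\bigr)$.
\item $M^P$ then gives $(x,PsPf)=(x,P(sf))$, which is $\kappa_P(sf)=\kappa_P m_E(r,s)$.
\end{itemize}
On arrows, a morphism of squares is a cube whose upper-left component is some $a\colon x\to x_1$ and whose lower-right component is some $d\colon y'\to y'_1$. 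The left side $\kappa_P m_E$ of \Cref{eq:kappa-multiplication} gives $(a,Pd)$. The right side passes through $\bigl((a,P\,\cdot),Pd\bigr)$, and $M^P$ retains the total component $a$ and the outer base component $Pd$, as in the proof of \Cref{prop:comma-monad}. So both sides agree. The main obstacle is keeping track of which component each iterated functor retains; for this I would write out the cube's four faces once and identify their images.

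Finally, translating these into \Cref{def:colax-monad-morphism} with $V=\Dom$ and $\lambda=\kappa$, the equations are
\begin{equation*}
\kappa_P I_E=\Dom\eta_P=H^P,
\qquad
\kappa_P m_E=\Dom\mu_P\,\kappa_{\C P}\,\Sq\kappa_P=M^P\,\kappa_{\C P}\,(\kappa_P)^{\mathbf 2}.
\end{equation*}
These are exactly \Cref{eq:kappa-unit} and \Cref{eq:kappa-multiplication}, so $(\Dom,\kappa)$ is a colax morphism. \Cref{lem:restriction-scalars} then yields $\kappa^*\colon\Alg(\C)\to\Alg(\Sq)$, which pulls strict $\C$-algebras back to strict $\Sq$-algebras.
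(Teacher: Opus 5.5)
Your proposal is correct and follows essentially the same componentwise verification as the paper: it obtains 2-naturality from $QT=SP$ and $Q\beta=\alpha P$, checks the unit on identities, and unwinds $M^P\kappa_{\C P}(\kappa_P)^{\mathbf 2}$ on squares and cubes by retaining the upper-left total component and the lower-right base component. Your extra checks (functoriality of $\kappa_P$, naturality on arrows, and the explicit appeal to \Cref{lem:restriction-scalars} for the final sentence) only make explicit what the paper's proof leaves implicit.
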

\end{samepage}

\begin{proof}
For a square $(T,S)\colon P\to Q$, one has
\begin{equation*}
\overline T\,\kappa_P=\kappa_Q T^{\mathbf 2}
\end{equation*}
as displayed by
\begin{equation*}
\begin{tikzcd}[column sep=huge,row sep=large]
E^{\mathbf 2} \arrow[r,"\kappa_P"] \arrow[d,"T^{\mathbf 2}"']
& P\!\downarrow\!B \arrow[d,"\overline T"] \\
F^{\mathbf 2} \arrow[r,"\kappa_Q"']
& Q\!\downarrow\!C .
\end{tikzcd}
\end{equation*}
Both sides send $f\colon x\to y$ to
$(Tx,Q(Tf))=(Tx,S(Pf))$.  For a 2-cell
$(\beta,\alpha)$, both induced transformations have component on $f$
given by $(\beta_x,\alpha_{Py})$.  This proves 2-naturality.

The unit identity in \Cref{eq:kappa-unit} is the commutativity of
\begin{equation*}
\begin{tikzcd}[column sep=huge,row sep=large]
E \arrow[r,"I_E"] \arrow[dr,"H^P"']
& E^{\mathbf 2} \arrow[d,"\kappa_P"] \\
& P\!\downarrow\!B .
\end{tikzcd}
\end{equation*}
Indeed, the image of $1_x$ is $(x,1_{Px})$, and the image of a square
between identities is $(f,Pf)$.  The multiplication identity is the
commutativity of
\begin{equation}
\begin{tikzcd}[column sep=huge,row sep=large]
(E^{\mathbf 2})^{\mathbf 2}
  \arrow[r,"{\kappa_{\C P}(\kappa_P)^{\mathbf 2}}"]
  \arrow[d,"m_E"']
& \C P\!\downarrow\!B
  \arrow[d,"M^P"] \\
E^{\mathbf 2} \arrow[r,"\kappa_P"']
& P\!\downarrow\!B .
\end{tikzcd}
\label{eq:kappa-multiplication-diagram}
\end{equation}
To verify \Cref{eq:kappa-multiplication-diagram}, apply both sides
to the square in \Cref{eq:square-multiplication}.  The left side is
\begin{equation*}
(x,P(sf)).
\end{equation*}
On the right, $(\kappa_P)^{\mathbf 2}$ first regards the square as
the comma arrow
\begin{equation*}
(r,Ps)\colon(x,Pf)\longrightarrow(x',Pg).
\end{equation*}
Then $\kappa_{\C P}$ regards that arrow as the object
$((x,Pf),Ps)$ of $\C^2P$, and $M^{P}$ sends it to
$(x,Ps\,Pf)=(x,P(sf))$.  It remains to consider a morphism between
commutative squares.  Let
\begin{equation*}
  (p,q)\colon f\to f_1,
  \qquad
  (p',q')\colon g\to g_1
\end{equation*}
be a morphism from $(r,s)\colon f\to g$ to
$(r_1,s_1)\colon f_1\to g_1$ in $(E^{\mathbf 2})^{\mathbf 2}$.
Thus $p'r=r_1p$ and $q's=s_1q$.  Multiplication in the squaring monad
sends this morphism to the square $(p,q')$ between the two diagonal
arrows, and the left side of \Cref{eq:kappa-multiplication} sends it
to the comma arrow
\begin{equation*}
  (p,Pq').
\end{equation*}
On the right, $(\kappa_P)^{\mathbf 2}$ gives the pair of comma arrows
$(p,Pq)$ and $(p',Pq')$; then $\kappa_{\C P}$ takes the first as its
inner total component and $Pq'$ as its outer base component.  Finally,
$M^{P}$ retains precisely $p$ and $Pq'$.  Hence both sides agree on
arrows, and the functors in \Cref{eq:kappa-multiplication} are equal.
\end{proof}

\begin{proposition}\label{prop:action-to-square-action}
If $(P,A)$ is a strict $\C$-algebra, then
\begin{equation}
F^{P}:=A\kappa_P\colon E^{\mathbf 2}\longrightarrow E
  \label{eq:FP}
\end{equation}
is a strict $\Sq$-algebra.  This construction is compatible with
strict algebra morphisms and algebra 2-cells.  More precisely, the
assignments define a strict 2-functor
\begin{equation}
  \kappa^{*}\colon\Alg(\C)\longrightarrow\Alg(\Sq)
  \label{eq:kappa-pullback}
\end{equation}
satisfying $U_{\Sq}\kappa^{*}=\Dom U_{\C}$ as an equality of 2-functors.
\end{proposition}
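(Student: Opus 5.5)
The statement is an instance of \Cref{lem:restriction-scalars}, applied to the colax morphism $(\Dom,\kappa)$ of \Cref{prop:kappa-monad-comparison}. The plan is therefore to check that the general formulas specialize to the ones displayed. Nothing new has to be proved beyond identifying the restricted action and confirming the strict 2-functoriality claims.

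\emph{Objects.} A strict $\C$-algebra is $(P,(A,1_B))$ with $(A,1_B)\colon\C P\to P$, as noted after \Cref{eq:abstract-algebra-laws}. Its image under \Cref{eq:restricted-action} is
\begin{equation*}
\Dom(A,1_B)\,\kappa_P=A\kappa_P=F^P.
\end{equation*}
\Cref{lem:restriction-scalars} already shows that this is a strict $\Sq$-algebra. As a sanity check, the unit law is $A\kappa_PI_E=AH^P=1_E$ by \Cref{eq:kappa-unit} and \Cref{eq:algebra-laws}. The multiplication law is obtained by combining \Cref{eq:kappa-multiplication}, the law $AM^P=A\overline A$, and the 2-naturality square $\overline A\,\kappa_{\C P}=\kappa_P A^{\mathbf 2}$. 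That square is the instance of 2-naturality of $\kappa$ at the 1-cell $(A,1_B)\colon\C P\to P$. The only point needing care here is that the domain component of $\C(A,1_B)$ is indeed $\overline A$, and that $\Dom\C^2P=\C P\!\downarrow\!B$. Both hold by construction.

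\emph{Morphisms and 2-cells.} A strict algebra morphism $(T,S)$ satisfies $TA=D\overline T$. Then $TF^P=TA\kappa_P=D\overline T\kappa_P=D\kappa_QT^{\mathbf 2}=F^QT^{\mathbf 2}$, where the third equality uses the naturality square in the proof of \Cref{prop:kappa-monad-comparison}. Likewise, an algebra 2-cell $(\beta,\alpha)$ satisfies $\beta A=D\overline\beta$. Whiskering with $\kappa_P$ and using 2-naturality of $\kappa$ at $(\beta,\alpha)$, namely $\overline\beta\kappa_P=\kappa_Q\beta^{\mathbf 2}$, gives $\beta F^P=F^Q\beta^{\mathbf 2}$. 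Thus $\kappa^*$ sends $(T,S)\mapsto T$ and $(\beta,\alpha)\mapsto\beta$. Identities and compositions are preserved on the nose because $\Dom$ is a strict 2-functor. The equality $U_{\Sq}\kappa^*=\Dom U_{\C}$ is the commuting square in \Cref{lem:restriction-scalars}.

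I expect no real obstacle. The only step requiring attention is the bookkeeping in the multiplication law, where one must match $\Dom\C(A,1_B)$ with $\overline A$ and $\Sq(A\kappa_P)$ with $A^{\mathbf 2}\kappa_P^{\mathbf 2}$ before applying the general lemma. Since the general lemma already handles this, I would simply invoke \Cref{lem:restriction-scalars} with $V=\Dom$ and $\lambda=\kappa$, and record the explicit formulas.
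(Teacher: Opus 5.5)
Your proposal is correct and is essentially the paper's proof. The paper writes out exactly the specialized computations you sketch: the unit law via $\kappa_PI_E=H^P$; the multiplication law via \Cref{eq:kappa-multiplication}, $AM^P=A\overline A$ and 2-naturality of $\kappa$ at $(A,1_B)$; and the morphism and 2-cell equations via 2-naturality at $(T,S)$ and $(\beta,\alpha)$. This is just \Cref{lem:restriction-scalars} instantiated at $V=\Dom$, $\lambda=\kappa$, so citing the lemma directly is equally valid, since \Cref{prop:kappa-monad-comparison} already shows that $(\Dom,\kappa)$ is a colax morphism.
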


\begin{proof}
The unit equation follows from
\begin{equation*}
F^{P}I_E=A\kappa_PI_E=AH^{P}=1_E.
\end{equation*}
For multiplication, use \Cref{eq:kappa-multiplication}, the
$\C$-algebra law, and 2-naturality of $\kappa$ at the square
$(A,1_B)\colon\C P\to P$:
\begin{align*}
  F^{P}m_E
  &=A\kappa_Pm_E\\
  &=AM^{P}\kappa_{\C P}(\kappa_P)^{\mathbf 2}\\
  &=A\overline A\kappa_{\C P}(\kappa_P)^{\mathbf 2}\\
  &=A\kappa_P A^{\mathbf 2}(\kappa_P)^{\mathbf 2}\\
  &=F^{P}(F^{P})^{\mathbf 2}.
\end{align*}
This proves \Cref{eq:square-algebra-laws}.  Now let
$(T,S)\colon(P,A)\to(Q,D)$ be a strict $\C$-morphism.  Then
\begin{align*}
  TF^{P}
  &=TA\kappa_P
   =D\overline T\kappa_P
   =D\kappa_QT^{\mathbf 2}
   =F^{Q}T^{\mathbf 2},
\end{align*}
where the third equality is 2-naturality of $\kappa$.  Hence $T$ is a
strict $\Sq$-morphism.  If $(\beta,\alpha)$ is an algebra 2-cell, then
\begin{align*}
  \beta F^{P}
  &=\beta A\kappa_P
   =D\overline\beta\kappa_P
   =D\kappa_Q\beta^{\mathbf 2}
   =F^{Q}\beta^{\mathbf 2}.
\end{align*}
This is the $\Sq$-algebra 2-cell equation.
Since $\kappa^{*}$ leaves the underlying functors and natural
transformations unchanged, it preserves both kinds of composition and
identities strictly.  The displayed equality of forgetful 2-functors
is immediate from the construction.
\end{proof}

\begin{theorem}[The Grothendieck factorization 2-functor]
\label{thm:grothendieck-factorization-functor}
The colax morphism of 2-monads $(\Dom,\kappa)$ and the comparison
isomorphism of \Cref{thm:main} determine a strict 2-functor
\begin{equation}
  \Fact:=\kappa^{*}\Gamma\colon
  \SCoFib\longrightarrow\Alg(\Sq).
  \label{eq:Fact-definition}
\end{equation}
\begin{samepage}
The following triangle and square commute strictly:
\begin{equation*}
\begin{tikzcd}[column sep=large,row sep=large]
\SCoFib \arrow[rr,"\Gamma"] \arrow[dr,"\Fact"']
  && \Alg(\C) \arrow[dl,"\kappa^{*}"] \\
  & \Alg(\Sq), &
\end{tikzcd}
\qquad
\begin{tikzcd}[column sep=large,row sep=large]
\SCoFib \arrow[r,"\Fact"] \arrow[d,"U"']
  & \Alg(\Sq) \arrow[d,"U_{\Sq}"] \\
\CAT^{\mathbf 2} \arrow[r,"\Dom"'] & \CAT .
\end{tikzcd}
\end{equation*}
\end{samepage}
More explicitly, if $P\colon E\to B$ is a split cofibration, then
$\Fact(P)$ has action
\begin{equation}
  F^{P}:=L^{P}\kappa_P\colon E^{\mathbf 2}\longrightarrow E.
  \label{eq:Fact-action}
\end{equation}
For $f\colon x\to y$ and a commutative square
$(r,s)\colon f\to g$, where $g\colon x'\to y'$, one has
\begin{align}
  F^{P}(f)&=(Pf)_{!}x,
  \label{eq:FP-object}\\
  P F^{P}(r,s)&=Ps,
  \notag\\
  F^{P}(r,s)\,\delta^{x}_{Pf}&=\delta^{x'}_{Pg}\,r.
  \label{eq:Fact-on-squares}
\end{align}
Moreover,
\begin{equation}
\begin{tikzcd}[column sep=large,row sep=large]
x \arrow[r,"{\delta^{x}_{Pf}}"] \arrow[d,"r"']
  & (Pf)_{!}x \arrow[r,"\nu_f"]
      \arrow[d,"{F^{P}(r,s)}"]
  & y \arrow[d,"s"] \\
x' \arrow[r,"{\delta^{x'}_{Pg}}"']
  & (Pg)_{!}x' \arrow[r,"\nu_g"']
  & y'
\end{tikzcd}
\label{eq:Fact-square-of-factorizations}
\end{equation}
commutes.  The factorization selected by this algebra is
\begin{equation}
\begin{tikzcd}[column sep=large,row sep=large]
x \arrow[rr,"f"] \arrow[dr,"{\delta^{x}_{Pf}}"']
  && y \\
  & (Pf)_{!}x \arrow[ur,"\nu_f"']
\end{tikzcd}
\qquad
f=\nu_f\delta^{x}_{Pf},
\quad P\nu_f=1_{Py}.
  \label{eq:FP-factors}
\end{equation}

On a strictly cleavage-preserving square
$(T,S)\colon P\to Q$, the 2-functor $\Fact$ acts by the total functor
$T$, and
\begin{equation}
  T F^{P}=F^{Q}T^{\mathbf 2}.
  \label{eq:Fact-on-one-cells}
\end{equation}
In particular,
\begin{equation*}
  T\delta^{x}_{Pf}=\delta^{Tx}_{Q(Tf)},
  \qquad
  T\nu_f=\nu_{Tf}.
\end{equation*}
On a 2-cell
$(\beta,\alpha)\colon(T,S)\Rightarrow(T',S')$, it acts by $\beta$,
and
\begin{equation}
  \beta F^{P}=F^{Q}\beta^{\mathbf 2}.
  \label{eq:Fact-on-two-cells}
\end{equation}
Thus the passage from split Grothendieck transport to the chosen
factorizations of the total category is functorial in both
dimensions and is obtained by change of 2-monads.
\end{theorem}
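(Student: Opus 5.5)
The plan is to assemble the 2-functor from the results already in place and then read off the explicit formulas by evaluating the actions. First I set $\Fact:=\kappa^{*}\Gamma$, where $\Gamma$ is the isomorphism of \Cref{thm:main} and $\kappa^{*}$ is the strict 2-functor of \Cref{prop:action-to-square-action}. The triangle then commutes by definition. For the square, I use $U_{\Sq}\kappa^{*}=\Dom U_{\C}$ from \Cref{prop:action-to-square-action} together with $U_{\C}\Gamma=U$ from \Cref{thm:main}, which gives $U_{\Sq}\Fact=\Dom U$. Since $\Gamma$ sends $P$ to $(P,L^{P})$, the action of $\Fact(P)$ is $L^{P}\kappa_P$. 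This is \Cref{eq:Fact-action}.

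Next I unpack the action using \Cref{lem:action-from-cleavage}. By \Cref{eq:kappa-object}, $\kappa_P f=(x,Pf)$, so $F^{P}(f)=(Pf)_!x$. For a square $(r,s)\colon f\to g$, \Cref{eq:kappa-arrow} gives $\kappa_P(r,s)=(r,Ps)$. The defining properties of $L^{P}$ then give $PF^{P}(r,s)=Ps$ and \Cref{eq:Fact-on-squares}. For the factorization I use the general recipe \Cref{eq:sq-factorization}:
\begin{itemize}
\item $e_f=F^{P}(1_x,f)=L^{P}(1_x,Pf)$, which equals $\delta^{x}_{Pf}$ by normality and the computation in \Cref{prop:object-inverses};
\item $m_f=F^{P}(f,1_y)$ is an arrow over $P1_y=1_{Py}$, hence vertical, and $m_fe_f=f$.
\end{itemize}
By the uniqueness of the vertical factor in \Cref{eq:cofib-factorization}, $m_f=\nu_f$. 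This proves \Cref{eq:FP-factors}.

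The left square of \Cref{eq:Fact-square-of-factorizations} is \Cref{eq:Fact-on-squares}. The right square is the step that needs an actual argument, and it is where I expect the main care. Both composites $s\nu_f$ and $\nu_gF^{P}(r,s)$ lie over $Ps$. After precomposition with the cocartesian arrow $\delta^{x}_{Pf}$, the first becomes $sf$. The second becomes $\nu_g\delta^{x'}_{Pg}r=gr$, using the left square. Since $sf=gr$, cocartesian uniqueness gives equality. Alternatively, this square is just functoriality of $F^{P}$ applied to the factorization $(r,s)=(1_{x'},g)\circ\ldots$ in $E^{\mathbf 2}$, but the cancellation argument is cleaner.

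Finally, on 1-cells and 2-cells, $\kappa^{*}$ leaves the underlying data unchanged, so $\Fact$ acts by $T$ and by $\beta$. Then \Cref{eq:Fact-on-one-cells,eq:Fact-on-two-cells} are exactly the strict $\Sq$-morphism and $\Sq$-2-cell equations established in \Cref{prop:action-to-square-action}. Here I use \Cref{prop:one-cells,prop:two-cells} to know that cleavage-preserving squares and arbitrary 2-cells are algebra morphisms and algebra 2-cells. Applying \Cref{eq:Fact-on-one-cells} to the arrows $(1_x,f)$ and $(f,1_y)$ of $E^{\mathbf 2}$ yields $Te_f=e_{Tf}$ and $Tm_f=m_{Tf}$. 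By the identification above, these say $T\delta^{x}_{Pf}=\delta^{Tx}_{Q(Tf)}$ and $T\nu_f=\nu_{Tf}$, since $Q(Tf)=S(Pf)$.
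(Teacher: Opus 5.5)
Your proposal is correct and follows the paper's proof essentially step by step: the composite $\kappa^{*}\Gamma$, evaluation of $L^{P}\kappa_P$, cocartesian cancellation for the right-hand square, and the strict morphism and 2-cell equations supplied by $\kappa^{*}$. The differences are minor: you identify $F^{P}(f,1_y)$ with $\nu_f$ via the $\Sq$-unit law and uniqueness of the vertical factor, and you obtain $T\nu_f=\nu_{Tf}$ by evaluating \Cref{eq:Fact-on-one-cells} at $(f,1_y)$, where the paper argues directly by cocartesian uniqueness. Your parenthetical ``alternatively'' is misstated as written, since $(r,s)$ need not factor through $(1_{x'},g)$; the intended functoriality argument does work via $(g,1_{y'})(r,s)=(gr,s)=(sf,s)=(s,s)(f,1_y)$ in $E^{\mathbf 2}$, and you do not rely on it anyway.
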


\begin{proof}
By \Cref{thm:main}, $\Gamma$ is a strict isomorphism over
$\CAT^{\mathbf 2}$, and by
\Cref{prop:action-to-square-action}, $\kappa^{*}$ is a strict
2-functor satisfying $U_{\Sq}\kappa^{*}=\Dom U_{\C}$.  Their composite
therefore gives \Cref{eq:Fact-definition}; both displayed diagrams
commute strictly, and $U_{\Sq}\Fact=\Dom U$.

For a split cofibration, $\Gamma(P)=(P,L^{P})$, so its induced
$\Sq$-action is $F^{P}=L^{P}\kappa_P$.  Since
$\kappa_P(f)=(x,Pf)$, \Cref{eq:FP-object} follows.  If
$(r,s)\colon f\to g$, then $\kappa_P(r,s)=(r,Ps)$, and the defining
property of $L^{P}$ gives \Cref{eq:Fact-on-squares}.  The left square
in \Cref{eq:Fact-square-of-factorizations} is that same equation.
For the right square, both $\nu_gF^{P}(r,s)$ and $s\nu_f$ become
$gr=sf$ after precomposition with $\delta^{x}_{Pf}$; cocartesian
uniqueness makes them equal.

The left and right factors selected by a $\Sq$-action are respectively
$F^{P}(1_x,f)$ and $F^{P}(f,1_y)$.  Now
\begin{equation*}
  \kappa_P(1_x,f)=(1_x,Pf),
  \qquad
  \kappa_P(f,1_y)=(f,1_{Py}).
\end{equation*}
Applying $L^{P}$ gives $\delta^{x}_{Pf}$ and $\nu_f$, respectively,
which proves \Cref{eq:FP-factors}.

\Cref{eq:Fact-on-one-cells} and
\Cref{eq:Fact-on-two-cells} are the strict algebra-morphism and
algebra-2-cell equations supplied by $\kappa^{*}$.  The equality
$T\nu_f=\nu_{Tf}$ may also be seen directly: $T\nu_f$ is vertical and
\begin{equation*}
  (T\nu_f)\delta^{Tx}_{Q(Tf)}
  =(T\nu_f)(T\delta^{x}_{Pf})=Tf,
\end{equation*}
so cocartesian uniqueness identifies it with $\nu_{Tf}$.
\end{proof}

\begin{theorem}[The pseudoalgebraic Grothendieck factorization]
\label{thm:pseudo-grothendieck-factorization}
Restriction of scalars along the colax monad morphism
$(\Dom,\kappa)$ and the comparison of
\Cref{thm:normal-pseudo} determine a 2-functor
\begin{equation}
  \Fact_{\mathrm n}:=
  \kappa^*_{\mathrm{ps}}\Gamma_{\mathrm n}\colon
  \NClCoFib\longrightarrow\PsAlg_{\mathrm n}(\Sq).
  \label{eq:pseudo-Fact-definition}
\end{equation}
For a normally cloven cofibration $P\colon E\to B$, its action is
\begin{equation*}
  F^P=L^P\kappa_P\colon E^{\mathbf 2}\longrightarrow E.
\end{equation*}
It sends $f\colon x\to y$ to $(Pf)_!x$ and selects the normalized
factorization
\begin{equation}
\begin{tikzcd}[column sep=large,row sep=large]
x \arrow[rr,"f"] \arrow[dr,"\delta^x_{Pf}"']
&&y\\
&(Pf)_!x\arrow[ur,"\nu_f"']&
\end{tikzcd}
\qquad f=\nu_f\delta^x_{Pf}.
\label{eq:pseudo-Fact-factors}
\end{equation}
Under the isomorphism in
\Cref{eq:sq-pseudo-OFS}, this pseudoalgebra is the orthogonal
factorization system
\begin{equation}
  \bigl(\CoCart(P),P^{-1}\Iso(B)\bigr)
  \label{eq:pseudo-Fact-OFS}
\end{equation}
equipped with the chosen factorizations displayed in
\Cref{eq:pseudo-Fact-factors}.  On a 1-cell
$(T,S)\colon P\to Q$ of $\NClCoFib$, the pseudomorphism comparison at
$f\colon x\to y$ is the unique vertical isomorphism
\begin{equation}
  \theta_f\colon
  (Q(Tf))_!Tx=(S(Pf))_!Tx
  \xrightarrow{\ \cong\ }T((Pf)_!x)
  \label{eq:pseudo-Fact-morphism}
\end{equation}
satisfying
\begin{equation}
\begin{tikzcd}[column sep=huge,row sep=large]
Tx \arrow[r,"{\delta^{Tx}_{Q(Tf)}}"]
  \arrow[dr,"{T\delta^x_{Pf}}"']
  & (Q(Tf))_!Tx \arrow[d,"{\theta_f}","{\cong}"'] \\
  & T((Pf)_!x).
\end{tikzcd}
\label{eq:pseudo-Fact-comparison-triangle}
\end{equation}
The upper row of \Cref{eq:intro-architecture} is the restriction of
this construction to split cleavages and strictly
cleavage-preserving squares.
\end{theorem}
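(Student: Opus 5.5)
The plan is to assemble $\Fact_{\mathrm n}$ formally and then compute its values. Composing the isomorphism $\Gamma_{\mathrm n}$ of \Cref{thm:normal-pseudo} with the 2-functor $\kappa^*_{\mathrm{ps}}$ of \Cref{prop:pseudo-restriction-scalars}, applied to the colax morphism $(\Dom,\kappa)$ of \Cref{prop:kappa-monad-comparison}, gives a 2-functor into $\PsAlg(\Sq)$. Normality is preserved by the last clause of \Cref{prop:pseudo-restriction-scalars}, so it lands in $\PsAlg_{\mathrm n}(\Sq)$.

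On objects the restricted action is $\Dom(L^P,1_B)\kappa_P=L^P\kappa_P$. Exactly as in the proof of \Cref{thm:grothendieck-factorization-functor}, we have $\kappa_P(f)=(x,Pf)$. The two factors are $F^P(1_x,f)=L^P(1_x,Pf)=\delta^x_{Pf}$, using normality through $L^P(1_x,u)=\delta^x_u$, and $F^P(f,1_y)=L^P(f,1_{Py})=\nu_f$. This gives \Cref{eq:pseudo-Fact-factors}, and it is normalized because $\delta^x_{1_{Px}}=1_x$. The unit cell is $\Dom$ of an identity, so it is an identity. The multiplication cell is $\chi\,\kappa_{\C P}(\kappa_P)^{\mathbf 2}$: at a square $(r,s)\colon f\to g$ it is $\chi^x_{Ps,Pf}$, the canonical comparison of \Cref{lem:transport-coherence}. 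No axioms need checking, because they are inherited from \Cref{prop:pseudo-restriction-scalars}.

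Next I identify the orthogonal classes under \Cref{eq:sq-pseudo-OFS}. In the Korostenski--Tholen correspondence the left class is the isomorphism closure of the chosen left factors, and the right class is the isomorphism closure of the chosen right factors. Equivalently, $e$ lies in the left class iff $m_e$ is invertible, and dually for the right class. A cocartesian arrow $e$ over $u$ is $\delta^x_u$ followed by a vertical isomorphism, and that isomorphism is exactly $\nu_e$. Conversely, $\delta^x_{Pf}$ followed by an isomorphism is cocartesian. Hence the left class is $\CoCart(P)$.

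For the right class: $f$ lies in it iff $\delta^x_{Pf}$ is invertible. If $Pf$ is an isomorphism, a cocartesian arrow over an isomorphism is an isomorphism, since its inverse is obtained by lifting $(Pf)^{-1}$ and using uniqueness. Conversely, if $\delta^x_{Pf}$ is invertible, then $Pf=P\delta^x_{Pf}\,P\nu_f$ is invertible. So the right class is $P^{-1}\Iso(B)$. As an independent check, I will verify the orthogonality $\CoCart(P)\perp P^{-1}\Iso(B)$ directly. Given a square $ge'=me$ with $e$ cocartesian and $Pm$ invertible, take the diagonal as the unique lift of $ge'$ over $(Pm)^{-1}Pg$ along $e$. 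One then checks that $m$ composed with this diagonal equals $g$, using cocartesian uniqueness of $e$ over $Pg$.

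For 1-cells, the restricted comparison \Cref{eq:restricted-pseudomorphism} is $\theta\,\kappa_P$. Its component at $f$ is $\theta_{x,Pf}$, and $S(Pf)=Q(Tf)$. By the proof of \Cref{thm:normal-pseudo}, this is the unique vertical isomorphism satisfying \Cref{eq:pseudomorphism-lift-triangle}, which is \Cref{eq:pseudo-Fact-comparison-triangle}. For the final claim, \Cref{thm:normal-pseudo}(ii) shows that $\Gamma_{\mathrm n}$ restricted to split cleavages and strict morphisms gives $\Gamma$, with compositor identities by the split law \Cref{eq:split-delta}. Also, $\kappa^*_{\mathrm{ps}}$ sends strict algebras with identity cells to $\kappa^*$, by the formulas of \Cref{lem:restriction-scalars}. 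So the two rows of \Cref{eq:intro-architecture} commute.

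The main obstacle is the class identification, since it depends on the precise form of the isomorphism \Cref{eq:sq-pseudo-OFS}. I would settle it by the invertibility characterizations above rather than by unpacking \citet{KorostenskiTholen1993} further.
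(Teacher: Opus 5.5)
Your proposal is correct and follows essentially the same route as the paper. The paper also forms the composite $\kappa^*_{\mathrm{ps}}\Gamma_{\mathrm n}$, reads off the factors from $\kappa_P(1_x,f)$ and $\kappa_P(f,1_y)$, identifies the classes through invertibility of $\nu_f$ and of $\delta^x_{Pf}$, and obtains the comparison as $\theta$ whiskered by $\kappa_P$. Your left-class step uses uniqueness of cocartesian lifts up to vertical isomorphism instead of the paper's right-cancellation argument, which is an inessential variant.

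There are only two cosmetic slips. First, $Pf=P\delta^x_{Pf}\,P\nu_f$ has its factors in the wrong order; simply $Pf=P\delta^x_{Pf}$. Second, in your optional orthogonality check the square should read $me'=ge$, with the diagonal lifting $e'$ rather than $ge'$. That check is redundant anyway, since orthogonality already comes with \Cref{eq:sq-pseudo-OFS}.
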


\begin{proof}
\Cref{prop:pseudo-restriction-scalars,prop:kappa-monad-comparison}
give a 2-functor
\begin{equation*}
  \kappa^*_{\mathrm{ps}}\colon
  \PsAlg_{\mathrm n}(\C)\longrightarrow
  \PsAlg_{\mathrm n}(\Sq),
\end{equation*}
and \Cref{thm:normal-pseudo} gives $\Gamma_{\mathrm n}$, proving
\Cref{eq:pseudo-Fact-definition}.  The restriction formula in
\Cref{eq:restricted-action} gives the action $L^P\kappa_P$.  The two factors selected
by a normal pseudo-$\Sq$-algebra are
$F^P(1_x,f)$ and $F^P(f,1_y)$.  Since
\begin{equation*}
  \kappa_P(1_x,f)=(1_x,Pf),
  \qquad
  \kappa_P(f,1_y)=(f,1_{Py}),
\end{equation*}
they are $\delta^x_{Pf}$ and $\nu_f$, respectively.  Normality makes
the factorization of an identity the pair of identities.

By \Cref{eq:sq-pseudo-OFS}, the left class consists of the arrows $f$
for which $\nu_f$ is invertible.  If $\nu_f$ is invertible, then
$f=\nu_f\delta^x_{Pf}$ is cocartesian.  Conversely, suppose that $f$
is cocartesian.  Write $u=Pf$ and $\delta=\delta^x_u$.  Given
$g\colon u_!x\to z$ and $w\colon Py\to Pz$ with
$Pg=wP\nu_f=w$, the composite $g\delta$ lies over $wu$.
Cocartesianness of $f=\nu_f\delta$ gives a unique
$h\colon y\to z$ over $w$ such that $hf=g\delta$.  Hence
$(h\nu_f)\delta=g\delta$; cancellation of the cocartesian arrow
$\delta$ gives $h\nu_f=g$.  If $h'\nu_f=g$, then
$h'f=g\delta=hf$, so cocartesianness of $f$ gives $h'=h$.
Therefore $\nu_f$ is cocartesian.

A vertical cocartesian arrow $q\colon d\to e$ is invertible.  Its
universal property applied to $1_d$ gives a vertical arrow
$r\colon e\to d$ with $rq=1_d$.  The vertical arrows $qr$ and $1_e$
agree after precomposition with $q$, and cancellation gives
$qr=1_e$.  Thus $\nu_f$ is invertible and the left class is
$\CoCart(P)$.

The right class consists of the arrows $f$ for which
$\delta^x_{Pf}$ is invertible.  Applying $P$ shows that invertibility
of the lift implies invertibility of $Pf$.  Conversely, suppose
$u=Pf$ is invertible.  The identity $1_x$ lies over $u^{-1}u$, so
cocartesianness of $\delta^x_u$ gives a unique arrow
$r\colon u_!x\to x$ over $u^{-1}$ with
$r\delta^x_u=1_x$.  Both $\delta^x_ur$ and $1_{u_!x}$ lie over
$1_b$ and agree after $\delta^x_u$; cocartesian uniqueness gives
$\delta^x_ur=1_{u_!x}$.  Hence $\delta^x_u$ is invertible and the
right class is $P^{-1}\Iso(B)$.  This proves
\Cref{eq:pseudo-Fact-OFS}.

For a pseudomorphism of comma algebras, the restricted comparison in
\Cref{eq:restricted-pseudomorphism} is obtained by whiskering its
component with $\kappa_P$.  At $f$ this is the isomorphism in
\Cref{eq:pseudo-Fact-morphism}.  Its existence and uniqueness were
proved in \Cref{thm:normal-pseudo}.  Such a $T$ preserves the left
class because it preserves cocartesian arrows, and it preserves the
right class because $Q(Tf)=S(Pf)$ and functors preserve isomorphisms.
Algebra transformations are the total components of the
arrow-2-category 2-cells.  Finally, if the cleavage and its preservation
are strict, all transport compositors and comparison cells are
identities, so the construction restricts to
\Cref{thm:grothendieck-factorization-functor}.
\end{proof}

\begin{corollary}[Arbitrary cleavages and factorization]
\label{cor:fixed-base-pseudo-factorization}
For each category $B$, the componentwise formulas of
\Cref{prop:kappa-monad-comparison} restrict to a colax monad morphism
from $\C_B$ on $\CAT/B$ to $\Sq$ on $\CAT$, with underlying 2-functor
$\Dom\colon\CAT/B\to\CAT$.
Restriction of scalars along this fixed-base form of $\kappa$ and the
isomorphism of
\Cref{thm:fixed-base-pseudo} give a 2-functor
\begin{equation}
  \Fact^{\mathrm{ps}}_B\colon
  \ClCoFib(B)\longrightarrow\PsAlg(\Sq).
  \label{eq:fixed-base-pseudo-Fact}
\end{equation}
For a cloven cofibration $P\colon E\to B$, its action is
$F^P=L^P\kappa_P$.  If
\begin{equation*}
  \jmath_x:=\delta^x_{1_{Px}}\colon
  x\xrightarrow{\ \cong\ }F^P(1_x)
\end{equation*}
denotes its pseudo-$\Sq$-unit, the factors selected from the
pseudoaction are
\begin{align}
  e_f&:=F^P(1_x,f)\jmath_x=\delta^x_{Pf},
  \label{eq:nonnormal-left-factor}\\
  m_f&:=\jmath_y^{-1}F^P(f,1_y)=\nu_f.
  \label{eq:nonnormal-right-factor}
\end{align}
They determine the orthogonal factorization system
\begin{equation}
  \bigl(\CoCart(P),P^{-1}\Iso(B)\bigr).
  \label{eq:fixed-base-pseudo-OFS}
\end{equation}
The chosen factorization of $1_x$ is
$x\xrightarrow{\jmath_x}F^P(1_x)
\xrightarrow{\jmath_x^{-1}}x$; it is the identity--identity
factorization precisely when the cleavage is normal.
\end{corollary}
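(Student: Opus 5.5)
The plan has four steps: check that $\kappa$ restricts to the fixed base, compose the two known 2-functors, compute the selected factors, and identify the orthogonal classes by reduction to the argument of \Cref{thm:pseudo-grothendieck-factorization}.

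First I check that the fixed-base restriction is still a colax morphism. The 2-monad $\C_B$ is the restriction of $\C$ to the sub-2-category $\CAT/B$ of $\CAT^{\mathbf 2}$: objects over $B$, squares with base component $1_B$, and 2-cells with base component $1_{1_B}$. It is preserved by $\C$ because $\Cod\C=\Cod$, and $\eta,\mu$ have identity base components. The components $\kappa_P$ depend only on $P$. So 2-naturality of $\kappa$ with respect to $\CAT/B$ and the equations \Cref{eq:kappa-unit,eq:kappa-multiplication} are special cases of \Cref{prop:kappa-monad-comparison}, with $V=\Dom$ composed with the inclusion. \Cref{prop:pseudo-restriction-scalars} then gives $\kappa^*_{\mathrm{ps}}\colon\PsAlg(\C_B)\to\PsAlg(\Sq)$. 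Composing it with the isomorphism of \Cref{thm:fixed-base-pseudo} yields $\Fact^{\mathrm{ps}}_B$.

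Second, I compute the restricted data. The action is $Va\lambda_P=L^P\kappa_P=F^P$. The unit cell is $V\iota$ whiskered appropriately, which by \Cref{eq:fixed-base-pseudo-unit} is $\jmath_x=\delta^x_{1_{Px}}$, landing in $F^P(1_x)=L^PH^Px$ via \Cref{eq:kappa-unit}. For a non-normal pseudo-$\Sq$-algebra I take the selected factors to be $e_f=F^P(1_x,f)\jmath_x$ and $m_f=\jmath_y^{-1}F^P(f,1_y)$. This is forced by the functoriality argument under \Cref{eq:sq-factorization}: in $E^{\mathbf 2}$ we have $(f,1_y)(1_x,f)=(f,f)=I_E f$, and naturality of $\jmath\colon 1\Rightarrow F^PI_E$ gives $f=\jmath_y^{-1}F^P(f,f)\jmath_x=m_fe_f$. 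Now $\kappa_P(1_x,f)=(1_x,Pf)$, and \Cref{lem:action-from-cleavage} gives $L^P(1_x,Pf)\delta^x_{1_{Px}}=\delta^x_{Pf}$, which proves \Cref{eq:nonnormal-left-factor}. For \Cref{eq:nonnormal-right-factor}, observe that $m_f$ is vertical and satisfies $m_f\delta^x_{Pf}=m_fe_f=f$. Uniqueness in \Cref{eq:cofib-factorization} then gives $m_f=\nu_f$.

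Third, the factorization $f=\nu_f\delta^x_{Pf}$ is literally the same as in the normal case. So the proof of \Cref{thm:pseudo-grothendieck-factorization} identifying the classes applies verbatim, since it never used normality. It shows that $\nu_f$ is invertible iff $f\in\CoCart(P)$, and that $\delta^x_{Pf}$ is invertible iff $Pf$ is invertible. The main obstacle is the bookkeeping here. The Korostenski--Tholen correspondence \Cref{eq:sq-pseudo-OFS} is stated for normal algebras, so I will either cite its non-normal version or argue orthogonality directly. For the direct argument, every arrow factors as a cocartesian arrow followed by one in $P^{-1}\Iso(B)$. Both classes contain the isomorphisms and are closed under composition, cocartesian arrows by the argument in \Cref{lem:transport-coherence}. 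For unique diagonal fill-ins, given a square with left side $e$ cocartesian and right side $m$ with $Pm$ invertible, the base diagonal is $(Pm)^{-1}P(\text{bottom})$. Cocartesianness of $e$ lifts this to a diagonal $d$ with $de$ equal to the top. The lower triangle then follows by cancelling $e$, and uniqueness follows from cocartesian uniqueness.

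Finally, the factorization of $1_x$ is $e_{1_x}=\delta^x_{1_{Px}}=\jmath_x$ followed by $m_{1_x}=\nu_{1_x}=\jmath_x^{-1}$. It is the identity--identity pair iff $\jmath_x=1_x$ for all $x$, which is exactly normality of the cleavage.
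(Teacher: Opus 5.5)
Your proof is correct, and most of it matches the paper's proof. The shared steps are: restricting $\kappa$ along $\CAT/B\hookrightarrow\CAT^{\mathbf 2}$; getting $f=m_fe_f$ from naturality of $\jmath$; reading off $e_f=\delta^x_{Pf}$ from the defining equation of $L^P$ at $(1_x,Pf)$; identifying $m_f$ with $\nu_f$ by verticality; and reusing the normality-free cancellation and inverse-lift arguments for the two classes. Your explicit check that $\C$, $\eta$, $\mu$ and $\kappa$ restrict to the fixed base fills in a step the paper leaves tacit.

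You genuinely diverge at orthogonality. You verify the factorization and unique-diagonal axioms directly; this is essentially the argument in \Cref{thm:factorization-comparison}(ii), which never used splitness. The paper instead replaces only the designated lifts over identities by identities. This gives a normal cleavage with the same cocartesian arrows, and the paper then applies \Cref{thm:pseudo-grothendieck-factorization}, so the orthogonal factorization system comes from the Korostenski--Tholen isomorphism \Cref{eq:sq-pseudo-OFS}.

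The renormalization trick is exactly how the paper avoids needing a non-normal version of that correspondence, which you flagged as the obstacle. It also makes the cleavage-independence of the pair visible in one line. Your direct route is self-contained and never passes through \Cref{eq:sq-pseudo-OFS}.

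Two small points on your write-up. In the diagonal argument, state that any diagonal $d'$ must lie over $(Pm)^{-1}Ps$ because $Pm$ is invertible; this is what lets cocartesian uniqueness apply. Also, $m_{1_x}=\jmath_x^{-1}$ is immediate from the definition of $m_f$, since $F^P(1_x,1_x)$ is an identity.
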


\begin{proof}
The fixed-base restriction of
\Cref{prop:pseudo-restriction-scalars,prop:kappa-monad-comparison}
composed with \Cref{eq:fixed-base-pseudo-comparison} gives
\Cref{eq:fixed-base-pseudo-Fact}.  Naturality of the pseudo-$\Sq$-unit
at $f$ gives
\begin{equation*}
  F^P(f,1_y)F^P(1_x,f)\jmath_x=\jmath_y f;
\end{equation*}
hence $f=m_fe_f$.  Since
\begin{equation*}
  \kappa_P(1_x,f)=(1_x,Pf),
  \qquad
  \kappa_P(f,1_y)=(f,1_{Py}),
\end{equation*}
the defining equations for $L^P$ give
\begin{equation*}
  F^P(1_x,f)\jmath_x=\delta^x_{Pf},
  \qquad
  \jmath_y^{-1}F^P(f,1_y)\delta^x_{Pf}=f.
\end{equation*}
The second equation, together with verticality, identifies
$\jmath_y^{-1}F^P(f,1_y)$ with $\nu_f$, proving
\Cref{eq:nonnormal-left-factor,eq:nonnormal-right-factor}.

For the chosen factorization $f=m_fe_f$, set
\begin{equation*}
  \mathcal E_F=\{f\mid m_f\text{ is invertible}\},
  \qquad
  \mathcal M_F=\{f\mid e_f\text{ is invertible}\}.
\end{equation*}
Then $\mathcal E_F=\CoCart(P)$ and
$\mathcal M_F=P^{-1}\Iso(B)$.  Indeed, these are the same cancellation
and inverse-lift arguments used in the proof of
\Cref{thm:pseudo-grothendieck-factorization}; they do not use
normality.  Replacing only the designated lifts over identity arrows
by identities gives a normal cleavage with the same cocartesian arrows.
\Cref{thm:pseudo-grothendieck-factorization} then shows that this
cleavage-independent pair is an orthogonal factorization system.  This
proves \Cref{eq:fixed-base-pseudo-OFS}.  Substituting
$f=1_x$ gives the final assertion.
\end{proof}

\subsection{The resulting strict and orthogonal factorization systems}

Recall that a \emph{strict factorization system} on $E$ is a pair
$(\mathcal E,\mathcal M)$ of wide subcategories such that every arrow
$f$ has a unique expression $f=me$ with $e\in\mathcal E$ and
$m\in\mathcal M$.  Unlike an ordinary orthogonal factorization
system, neither subcategory is required to contain every isomorphism.
For a strict factorization algebra $F$, with factors as in
\Cref{eq:sq-factorization}, the corresponding strict classes are
\begin{equation}
  \mathcal E_F=\{f\mid m_f=1\},
  \qquad
  \mathcal M_F=\{f\mid e_f=1\}.
  \label{eq:strict-classes}
\end{equation}
We use the factorization-and-lifting definition of an ordinary
orthogonal factorization system: both classes are closed under
composition with isomorphisms, every arrow factors as an arrow of the
left class followed by an arrow of the right class, and every square
from a left-class arrow to a right-class arrow has a unique diagonal.
These axioms imply
$\mathcal E={}^{\perp}\mathcal M$ and
$\mathcal M=\mathcal E^{\perp}$; in particular, both classes contain
all isomorphisms and are closed under composition.
For a strict factorization algebra with factors $f=m_fe_f$, its
associated ordinary classes are
\begin{equation}
  \mathcal E^{\simeq}=\{f\mid m_f\text{ is invertible}\},
  \qquad
  \mathcal M^{\simeq}=\{f\mid e_f\text{ is invertible}\}.
  \label{eq:ordinary-classes}
\end{equation}
For a functor $P\colon E\to B$, we write
\begin{equation*}
  P^{-1}\Iso(B)
  =\{f\in E^{\mathbf 2}\mid Pf\text{ is invertible in }B\}.
\end{equation*}
The general theory of strict factorization algebras shows that these
classes form an orthogonal factorization system.  We verify that claim
directly for the algebra at hand.

\begin{theorem}[Strict and orthogonal factorizations]
\label{thm:factorization-comparison}
Let $P\colon E\to B$ be a split cofibration.
\begin{enumerate}[label=\textup{(\roman*)},leftmargin=2.2em]
\item The strict $\Sq$-algebra $F^{P}$ corresponds to the strict
      factorization system
      \begin{equation}
        (\Delta_P,\Vrt(P)),
        \qquad
        \Delta_P=
        \{\delta^{x}_{u}\mid x\in E, u\colon Px\to b\}.
        \label{eq:strict-fs}
\end{equation}
\item The ordinary orthogonal factorization system associated with
      \Cref{eq:strict-fs} is
      \begin{equation}
        \bigl(\CoCart(P),P^{-1}\Iso(B)\bigr).
        \label{eq:ordinary-fs}
\end{equation}
\end{enumerate}
\end{theorem}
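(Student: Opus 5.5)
For part (i), I will compute the strict classes of \Cref{eq:strict-classes} from the explicit factors obtained in \Cref{thm:grothendieck-factorization-functor}. There $e_f=F^P(1_x,f)=\delta^x_{Pf}$ and $m_f=F^P(f,1_y)=\nu_f$.

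First, $f\in\mathcal E_F$ means $\nu_f=1$, which gives $f=\delta^x_{Pf}\in\Delta_P$. Conversely, if $f=\delta^x_u$, then $Pf=u$ and $\nu_f$ is the unique vertical arrow with $\nu_f\delta^x_u=\delta^x_u$, so $\nu_f=1$.

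Second, $f\in\mathcal M_F$ means $\delta^x_{Pf}=1_x$. Applying $P$ gives $Pf=1_{Px}$, so $f$ is vertical. Conversely, if $f$ is vertical, normality gives $\delta^x_{1_{Px}}=1_x$.

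It remains to confirm that these are wide subcategories with unique factorization, so that the algebra really corresponds to this strict system. For $\Vrt(P)$ this is clear. For $\Delta_P$, identities are $\delta^x_1$ and composites are closed by the split law \Cref{eq:split-delta}. Uniqueness of a factorization $f=me$ with $e=\delta^x_u$ and $m$ vertical follows because $Pe=Pf$ forces $u=Pf$, and then cocartesian uniqueness forces $m=\nu_f$.

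For part (ii), I will compute the ordinary classes of \Cref{eq:ordinary-classes}. With the same factors, $\mathcal E^{\simeq}=\{f\mid \nu_f\text{ invertible}\}$ and $\mathcal M^{\simeq}=\{f\mid \delta^x_{Pf}\text{ invertible}\}$. These are exactly the classes identified in the proof of \Cref{thm:pseudo-grothendieck-factorization}, namely $\CoCart(P)$ and $P^{-1}\Iso(B)$. That proof used only a normal cleavage, which a split cleavage is, so I will reuse its three arguments:
\begin{enumerate}[label=\textup{(\alph*)},leftmargin=2.2em]
\item cancellation shows that $\nu_f$ is cocartesian when $f$ is;
\item a vertical cocartesian arrow is invertible;
\item a chosen lift over an isomorphism is invertible.
\end{enumerate}

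That these classes form an orthogonal factorization system then follows from \Cref{thm:pseudo-grothendieck-factorization} together with \Cref{eq:sq-pseudo-OFS}. A split cofibration is normally cloven, and its strict action, regarded as a normal pseudoalgebra with identity cells, yields the same pair of classes.

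The main obstacle is making sure the closure and orthogonality axioms are genuinely established rather than only asserted through the cited isomorphism. As a safeguard I will also check them directly.

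\emph{Closure under isomorphisms.} Composing a cocartesian arrow with an isomorphism on either side keeps it cocartesian. $P^{-1}\Iso(B)$ is closed under isomorphisms because $P$ preserves them.

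\emph{Unique diagonals.} Take a square with $e\colon x\to y$ cocartesian on the left, $m\colon z\to w$ with $Pm$ invertible on the right, and top and bottom arrows $g\colon x\to z$ and $h\colon y\to w$, so that $mg=he$. Any diagonal $d\colon y\to z$ must lie over $(Pm)^{-1}Ph$, since $Pm\,Pd=Ph$. Cocartesianness of $e$, applied to $g$ over this base arrow, yields a unique $d$ with $de=g$ and $Pd=(Pm)^{-1}Ph$. The remaining equation $md=h$ holds because both $md$ and $h$ lie over $Ph$ and agree after precomposition with $e$, so cocartesian uniqueness makes them equal. Uniqueness of $d$ follows because any diagonal is forced to lie over $(Pm)^{-1}Ph$.
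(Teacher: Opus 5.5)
Your proposal is correct and follows essentially the paper's route. Part (i) reads the strict classes off $e_f=\delta^x_{Pf}$ and $m_f=\nu_f$, then checks closure and unique factorization via the split laws and cocartesian uniqueness; part (ii) uses right cancellation, invertibility of vertical cocartesian arrows, and the same unique-diagonal argument. The only difference is that you prove a lift over an isomorphism is invertible by cocartesian uniqueness rather than by the split equations $\delta^{u_!x}_{u^{-1}}\delta^x_u=\delta^x_{u^{-1}u}=1_x$, which is equally valid.
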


\begin{proof}
The split equations show that $\Delta_P$ contains all identities and
is closed under composition.  The vertical arrows form a wide
subcategory.  The factorization in \Cref{eq:cofib-factorization} writes every
arrow as an element of $\Delta_P$ followed by a vertical arrow.  If
$f=me$ is any such factorization and $x$ is the domain of $f$, then
$e\in\Delta_P$ has the form $\delta^{x}_{u}$.  Since $m$ is vertical,
$u=Pe=Pf$, and hence $e=\delta^{x}_{Pf}$.  The equality
$m\delta^{x}_{Pf}=f$ then forces $m=\nu_f$ by cocartesian uniqueness.
Thus the factorization is unique.  \Cref{eq:FP-factors} shows that the
classes defined in \Cref{eq:strict-classes} for $F^{P}$ are respectively $\Delta_P$ and
$\Vrt(P)$, and therefore identifies this strict system with the one
derived from $F^{P}$.

We prove the second assertion directly.  First, the enlargement of
$\Delta_P$ prescribed by a strict factorization algebra consists of
the arrows $f$ for which the right factor $\nu_f$ is invertible.  Such
an $f$ is $P$-cocartesian, because it is an isomorphism after a
cocartesian arrow.  Conversely, if $f=\nu_f\delta^{x}_{Pf}$ is
cocartesian, then $\nu_f$ is cocartesian by right cancellation for
cocartesian arrows.  Here is the cancellation argument.  If
$e\colon x\to y$ and $me\colon x\to z$ are cocartesian, and if
$g\colon y\to t$ satisfies $Pg=wPm$, cocartesianness of $me$ gives a
unique $k\colon z\to t$ over $w$ such that $kme=ge$.
Cocartesianness of $e$ then gives $km=g$, and uniqueness for $me$
gives uniqueness of $k$; hence $m$ is cocartesian.  A vertical
cocartesian arrow $m\colon y\to z$ is an isomorphism: applying its
universal property to $1_y$ gives an arrow $r\colon z\to y$ with
$rm=1_y$, and cocartesian uniqueness applied to
$(mr)m=m=1_zm$ gives $mr=1_z$.  Thus the enlarged left class is
$\CoCart(P)$.

The enlarged right class consists of those $f$ for which
$\delta^{x}_{Pf}$ is invertible.  A chosen lift $\delta^{x}_{u}$ is
invertible if and only if $u$ is invertible.  One implication follows
by applying $P$.  For the other, the splitting gives the strict object
equalities needed for
\begin{equation*}
  \delta^{u_{!}x}_{u^{-1}}\delta^{x}_{u}
    =\delta^{x}_{u^{-1}u}=1_x,
  \qquad
  \delta^{x}_{u}\delta^{u_{!}x}_{u^{-1}}
    =\delta^{u_{!}x}_{uu^{-1}}=1_{u_{!}x}.
\end{equation*}
Hence the enlarged right class is $P^{-1}\Iso(B)$.

Consider a commutative square
\begin{equation}
\begin{tikzcd}[column sep=large,row sep=large]
x \arrow[r,"r"] \arrow[d,"e"']
  & z \arrow[d,"m"] \\
y \arrow[r,"s"'] \arrow[ur,dashed,"d" description]
  & t
\end{tikzcd}
\label{eq:orth-square}
\end{equation}
with $e$ $P$-cocartesian and $Pm$ invertible.  Put
\begin{equation*}
q=(Pm)^{-1}Ps\colon Py\longrightarrow Pz.
\end{equation*}
The square equation gives $qPe=Pr$, so cocartesianness provides a
unique $d\colon y\to z$ over $q$ with $de=r$.  Now $md$ and $s$ lie
over the same base arrow and agree after $e$; cocartesian uniqueness
gives $md=s$.  Any diagonal in \Cref{eq:orth-square} is forced to lie
over $q$, so it equals $d$.  The two classes are closed under
composition with isomorphisms: isomorphisms are $P$-cocartesian and
cocartesian arrows compose, while $P$ preserves isomorphisms.  Finally,
every arrow has its cocartesian--vertical factorization, whose right
part belongs to $P^{-1}\Iso(B)$.  The defining axioms of an orthogonal
factorization system are therefore satisfied.
\end{proof}

Thus Part~\textup{(ii)} recovers, by duality, the prefibrational
factorization of \citet[Section~3.7]{RosickyTholen2007}, while
Part~\textup{(i)}
records the additional strict data carried by the chosen split
cleavage and transported by the 2-functor $\Fact$.

The strict system in \Cref{eq:strict-fs} depends on the designated
split cleavage.  Its associated ordinary orthogonal factorization
system in \Cref{eq:ordinary-fs} depends only on the underlying
cofibration $P$.

\begin{corollary}[Grothendieck constructions]
\label{cor:grothendieck-construction-factorization}
Let $X\colon B\to\CAT$ be a strict functor, and let
$\pi_X\colon\int_BX\to B$ be its Grothendieck construction.  Thus an
object is a pair $(b,x)$ with $x\in Xb$, and a morphism
\begin{equation*}
  (u,\varphi)\colon(b,x)\longrightarrow(c,y)
\end{equation*}
consists of $u\colon b\to c$ and
$\varphi\colon X(u)x\to y$ in $Xc$.  The canonical split cleavage has
designated cocartesian arrows
\begin{equation*}
  (u,1_{X(u)x})\colon(b,x)\longrightarrow(c,X(u)x).
\end{equation*}
The algebra $\Fact(\pi_X)$ selects the factorization
\begin{equation}
\begin{tikzcd}[column sep=huge]
(b,x) \arrow[r,"{(u,1_{X(u)x})}"]
  & (c,X(u)x) \arrow[r,"{(1_c,\varphi)}"]
  & (c,y).
\end{tikzcd}
\label{eq:grothendieck-construction-factorization}
\end{equation}
Its strict factorization system is formed by the designated arrows
$(u,1)$ and the vertical arrows $(1_b,\psi)$.  Its associated ordinary
orthogonal factorization system is
\begin{equation*}
  \left(
    \{(u,\varphi)\mid\varphi\text{ is invertible}\},
    \{(u,\varphi)\mid u\text{ is invertible}\}
  \right).
\end{equation*}
\end{corollary}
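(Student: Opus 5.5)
The plan is to first check that $\pi_X$ is a split cofibration with the stated designated arrows, and then read everything else off from \Cref{thm:grothendieck-factorization-functor} and \Cref{thm:factorization-comparison}.

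\textbf{Step 1: the cleavage is split.} Composition in $\int_BX$ is $(v,\psi)(u,\varphi)=(vu,\psi\,X(v)\varphi)$. To see that $(u,1_{X(u)x})$ is cocartesian, take $(w,\psi)\colon(b,x)\to(d,z)$ with $w=tu$. A factorization through $(u,1)$ is an arrow $(t,\chi)\colon(c,X(u)x)\to(d,z)$ with
\begin{equation*}
  (t,\chi)(u,1)=(tu,\chi\,X(t)1)=(tu,\chi).
\end{equation*}
It is therefore forced to be $(t,\psi)$, which gives existence and uniqueness. Normality and the split equations \Cref{eq:split-functors,eq:split-delta} hold because $X$ is a strict functor, so $X(1_b)=1$ and $X(vu)=X(v)X(u)$ on the nose. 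The composite of designated lifts is $(vu,1)$.

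\textbf{Step 2: the selected factorization.} By \Cref{eq:FP-factors}, the selected factorization of $f=(u,\varphi)$ is $\nu_f\,\delta^{(b,x)}_{u}$. The vertical arrow $\nu_f$ is characterized by $\nu_f(u,1)=(u,\varphi)$. The candidate $(1_c,\varphi)$ satisfies this, since
\begin{equation*}
  (1_c,\varphi)(u,1)=(u,\varphi\,X(1_c)1)=(u,\varphi).
\end{equation*}
Uniqueness then gives \Cref{eq:grothendieck-construction-factorization}.

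\textbf{Step 3: the strict system.} Part (i) of \Cref{thm:factorization-comparison} identifies the strict system as $(\Delta_P,\Vrt(P))$. Here $\Delta_P$ consists of the arrows $(u,1)$, and the vertical arrows are the $(1_b,\psi)$.

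\textbf{Step 4: the orthogonal system.} Part (ii) of \Cref{thm:factorization-comparison} gives $(\CoCart(\pi_X),\pi_X^{-1}\Iso(B))$. The second class is plainly $\{(u,\varphi)\mid u\text{ invertible}\}$. For the first class, use the description in the proof of that theorem as the arrows whose right factor $\nu_f$ is invertible. Here $\nu_f=(1_c,\varphi)$. A vertical arrow $(1_c,\varphi)$ is invertible in $\int_BX$ exactly when $\varphi$ is invertible in $Xc$, because inverses of vertical arrows are vertical and composition of vertical arrows is composition in $Xc$.

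\textbf{Main obstacle.} The only real care is in Step 4: one must identify the left class through the invertibility of $\nu_f$, as the paper's proof does, rather than argue directly about cocartesianness. Everything else is routine bookkeeping with the composition law.
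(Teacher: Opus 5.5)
Your proposal is correct and follows the paper's proof. You check that the cleavage is split using strictness of $X$, compute $\nu_{(u,\varphi)}=(1_c,\varphi)$, and read off the rest from \Cref{thm:grothendieck-factorization-functor,thm:factorization-comparison}; the only variation is that you identify the left class as $\{f\mid\nu_f\text{ invertible}\}$, whereas the paper argues directly that $(u,\varphi)=(1_c,\varphi)(u,1)$ is cocartesian exactly when $(1_c,\varphi)$ is, by stability under composition and right cancellation, hence exactly when $\varphi$ is invertible. Both routes work, so your closing remark that one \emph{must} avoid arguing directly about cocartesianness is overstated, and it is in fact the reverse of what the paper's proof of this corollary does.
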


\begin{proof}
The displayed cleavage is split because $X$ preserves identities and
composition strictly.  \Cref{eq:grothendieck-construction-factorization}
displays the cocartesian--vertical factorization of $(u,\varphi)$, so the assertion
about the strict system follows from
\Cref{thm:grothendieck-factorization-functor,thm:factorization-comparison}.
A morphism $(u,\varphi)$ is $\pi_X$-cocartesian precisely when
$\varphi$ is invertible.  Indeed, $(u,\varphi)$ is the composite of the
designated cocartesian arrow $(u,1)$ with the vertical arrow
$(1_c,\varphi)$.  Stability under composition and right cancellation
for cocartesian arrows show that the composite is cocartesian precisely
when the latter arrow is cocartesian, and a vertical arrow in the
Grothendieck construction is cocartesian precisely when its fibre
component is invertible.  The image of $(u,\varphi)$ under $\pi_X$ is
invertible precisely when $u$ is invertible.  The last assertion now
follows from \Cref{thm:factorization-comparison}.
\end{proof}

The normal form
\begin{equation*}
  (u,\varphi)=(1_c,\varphi)(u,1_{X(u)x})
\end{equation*}
is therefore not an auxiliary choice: it is the value of the restricted
$\Sq$-action on $(u,\varphi)$.  At the level of the Grothendieck
construction, the corresponding hierarchy of morphisms is recorded
explicitly by \citet[Proposition~4.1]{LucatelliNunesVakar2025}:
pseudomorphisms correspond to cartesian functors, strict morphisms to
cleavage-preserving functors, and strict indexed categories to split
fibrations.  The cofibrational formulation above is obtained by the
usual variance duality.

Grothendieck constructions of indexed categories also supply the total
categories in which forward- and reverse-mode CHAD for expressive total
languages are defined \citep{LucatelliNunesVakar2023}.

\begin{remark}\label{rem:right-skeletal}
The pair $(\CoCart(P),\Vrt(P))$ has factorizations and unique
diagonals against vertical arrows, but $\Vrt(P)$ need not contain
isomorphisms whose image under $P$ is a nonidentity isomorphism.  The
strict system in \Cref{eq:strict-fs} records the designated choices made by
the splitting; the ordinary system in \Cref{eq:ordinary-fs} records their
isomorphism closure.  Keeping these two levels separate removes the
need for a modified notion such as a ``right-skeletal orthogonal
factorization system.''
\end{remark}

\section{Duality and split fibrations}
\label{sec:dual}

\subsection{The dual comma 2-monad}

Taking opposites reverses natural transformations and defines a strict
2-isomorphism
\begin{equation}
  \mathcal O\colon(\CAT^{\mathbf 2})^{\mathrm{co}}
  \longrightarrow\CAT^{\mathbf 2},
  \qquad P\longmapsto P^{\mathrm{op}}.
  \label{eq:opposite-2-isomorphism}
\end{equation}
We use the standard involutive choice of opposites, so that taking
opposites twice is the identity.  For each $P$ there is also
a canonical isomorphism
\begin{equation}
  d_P\colon
  (P^{\mathrm{op}}\!\downarrow\!B^{\mathrm{op}})^{\mathrm{op}}
  \longrightarrow B\!\downarrow\!P,
  \label{eq:dual-comma-identification}
\end{equation}
which reorders an object to $(b,u,x)$.  An arrow from
$(x,u^{\mathrm{op}})$ to $(y,v^{\mathrm{op}})$ in the outer opposite
is represented before taking that opposite by
\begin{equation*}
  (f^{\mathrm{op}},s^{\mathrm{op}})
  \colon(y,v^{\mathrm{op}})\longrightarrow(x,u^{\mathrm{op}}).
\end{equation*}
The functor $d_P$ sends it to
\begin{equation*}
  (s,f)\colon(b,u,x)\longrightarrow(c,v,y).
\end{equation*}
Indeed, the comma equation before taking opposites becomes $Pf\,u=vs$.
The family $d=(d_P)_P$ is 2-natural, and every
$d_P$ is a strict isomorphism of categories over $B$.  We use this
family to present the transport of $\C$ across
\Cref{eq:opposite-2-isomorphism} as the dual comma 2-monad
$\mathbb F$.

Explicitly, for a square $(T,S)\colon P\to Q$, with $P\colon E\to B$
and $Q\colon F\to D$, let
$\widehat T\colon B\!\downarrow\!P\to D\!\downarrow\!Q$ send
$(b,u,x)$ to $(Sb,Su,Tx)$ and $(s,f)$ to $(Ss,Tf)$.  The naturality
square for $d$ is the strictly commutative square
\begin{equation}
\begin{tikzcd}[column sep=huge,row sep=large]
(P^{\mathrm{op}}\!\downarrow\!B^{\mathrm{op}})^{\mathrm{op}}
  \arrow[r,"{(\overline{T^{\mathrm{op}}})^{\mathrm{op}}}"]
  \arrow[d,"d_P"']
& (Q^{\mathrm{op}}\!\downarrow\!D^{\mathrm{op}})^{\mathrm{op}}
  \arrow[d,"d_Q"] \\
B\!\downarrow\!P
  \arrow[r,"{\widehat T}"']
& D\!\downarrow\!Q .
\end{tikzcd}
\label{eq:dual-comma-naturality}
\end{equation}
Both composites send an object to $(Sb,Su,Tx)$ and an arrow to
$(Ss,Tf)$.  For a 2-cell
$(\beta,\alpha)\colon(T,S)\Rightarrow(T',S')$, both induced
transformations have component $(\alpha_b,\beta_x)$ at $(b,u,x)$.
This proves 2-naturality on 1-cells and 2-cells without inserting any
coherence isomorphisms.

Explicitly, for
$P\colon E\to B$, put
\begin{equation*}
\mathbb F P=\Dom_{P}\colon
  B\!\downarrow\!P\longrightarrow B,
\end{equation*}
where an object is $(b,u,x)$ with $u\colon b\to Px$.  An arrow
\begin{equation*}
  (s,f)\colon(b,u,x)\longrightarrow(c,v,y)
\end{equation*}
consists of $s\colon b\to c$ and $f\colon x\to y$ satisfying
$Pf\,u=vs$, and $\Dom_{P}(s,f)=s$.  A square
$(T,S)\colon P\to Q$ is sent to the square whose total functor maps
$(b,u,x)$ to $(Sb,Su,Tx)$ and $(s,f)$ to $(Ss,Tf)$.  A 2-cell is sent
to the natural transformation whose component at $(b,u,x)$ is
$(\alpha_b,\beta_x)$; its comma equation is the naturality equation
$S'u\,\alpha_b=\alpha_{Px}Su=Q\beta_x\,Su$.

The unit has total component
\begin{equation*}
  J^{P}x=(Px,1_{Px},x),
  \qquad
  J^{P}f=(Pf,f),
\end{equation*}
and multiplication sends $(a,t,(b,u,x))$, with
$t\colon a\to b$ and $u\colon b\to Px$, to $(a,ut,x)$.
More explicitly, an arrow
\begin{equation*}
  (s,(r,f))\colon
  (a,t,(b,u,x))\longrightarrow(a',t',(b',u',x')),
\end{equation*}
with $rt=t's$ and $Pf\,u=u'r$, is sent by multiplication to
\begin{equation*}
  (s,f)\colon(a,ut,x)\longrightarrow(a',u't',x').
\end{equation*}
These are the opposite forms of
\Cref{eq:H-definition,eq:M-definition}, so the 2-naturality and monad
equations follow as strict equalities from
\Cref{prop:comma-monad}.

\begin{proposition}\label{prop:dual-comma-monad}
The displayed assignments define a strict 2-monad
$\mathbb F$ on $\CAT^{\mathbf 2}$.  Moreover, taking opposites induces
strict 2-isomorphisms
\begin{equation}
  \mathcal O_{\mathrm{cofib}}\colon
  \SCoFib^{\mathrm{co}}\longrightarrow\SFib,
  \qquad
  \mathcal O_{\mathrm{alg}}\colon
  \Alg(\C)^{\mathrm{co}}\longrightarrow\Alg(\mathbb F),
  \label{eq:dual-2-isomorphisms}
\end{equation}
where $\SFib$ denotes the 2-category of split fibrations, strictly
cleavage-preserving squares, and all 2-cells of $\CAT^{\mathbf 2}$
between such squares.
\end{proposition}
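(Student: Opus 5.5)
The plan is to obtain everything by transport of structure along the strict 2-isomorphism $\mathcal O$ of \Cref{eq:opposite-2-isomorphism}, using the 2-natural family $d$ of \Cref{eq:dual-comma-identification}.

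\textbf{Step 1: $\mathbb F$ is a strict 2-monad.} Given a strict 2-monad $\mathbb T$ on $\mathcal K$, the same data is a strict 2-monad on $\mathcal K^{\mathrm{co}}$, since reversing 2-cells preserves the monad equations. Conjugating by $\mathcal O$ gives a 2-monad $\mathcal O\,\C\,\mathcal O^{-1}$ on $\CAT^{\mathbf 2}$. At $P$ its value is $(P^{\mathrm{op}}\!\downarrow\!B^{\mathrm{op}})^{\mathrm{op}}\to B$. The square $(d_P,1_B)$ is an invertible 1-cell in $\CAT^{\mathbf 2}$ from this object to $\mathbb F P$, and it is 2-natural by \Cref{eq:dual-comma-naturality}. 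Transporting unit and multiplication along $d$ gives a 2-monad structure on $\mathbb F$. One then checks on objects and arrows that the transported unit is $J^P$ and the transported multiplication is the displayed one: $H^{P^{\mathrm{op}}}x=(x,1)$ goes to $(Px,1_{Px},x)$, and $M^{P^{\mathrm{op}}}$ composes base arrows in $B^{\mathrm{op}}$, which becomes $ut$ in $B$. This identification is the only real computation.

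\textbf{Step 2: $\Alg(\C)^{\mathrm{co}}\cong\Alg(\mathbb F)$.} For a strict 2-monad $\mathbb T$ on $\mathcal K$, we have $\Alg(\mathbb T)^{\mathrm{co}}=\Alg(\mathbb T^{\mathrm{co}})$ on $\mathcal K^{\mathrm{co}}$. The algebra 2-cell equation $\alpha a=b\,\mathbb T\alpha$ is symmetric under reversing $\alpha$. An isomorphism of 2-monads, here $\mathcal O$ followed by transport along $d$, induces an isomorphism of Eilenberg--Moore 2-categories. On objects it sends $(A,1_B)$ to $(A^{\mathrm{op}}d_P^{-1},1_B)$, and it leaves underlying squares and 2-cells unchanged up to $\mathcal O$.

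\textbf{Step 3: $\SCoFib^{\mathrm{co}}\cong\SFib$.} This step is elementary. An arrow is $P$-cartesian if and only if it is $P^{\mathrm{op}}$-cocartesian. Hence split cleavages of $P^{\mathrm{op}}$ correspond bijectively to split cleavages of $P$ as a fibration, with the splitting laws \Cref{eq:split-functors,eq:split-delta} reversing order. Strict preservation \Cref{eq:cleavage-preservation} is invariant under taking opposites. Both sides admit all 2-cells of the arrow 2-category, and $\mathcal O$ reverses them. So $\mathcal O$ restricts to the claimed strict 2-isomorphism, with inverse again given by $\mathcal O$ because opposites are involutive.

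I expect no real obstacle. The only delicate point is the bookkeeping in Step 1, namely checking that $d$ intertwines the units and multiplications strictly rather than up to isomorphism. I would verify this componentwise, exactly as in the proof of \Cref{prop:comma-monad}, by tracking the total and outermost base components.
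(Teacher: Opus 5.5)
Your proposal is correct and follows the paper's proof. Like the paper, you transport $\C^{\mathrm{co}}$ along the strict 2-isomorphism $\mathcal O$ and the 2-natural family of strict isomorphisms $d$, read off $J^P$ and the multiplication by applying opposites, and obtain $\mathcal O_{\mathrm{cofib}}$ and $\mathcal O_{\mathrm{alg}}$ by transporting cleavages, actions, morphisms and 2-cells componentwise. One small indexing slip: for a $\C$-algebra $(A,1_B)$ on $P\colon E\to B$, the transported $\mathbb F$-action lives on $P^{\mathrm{op}}$ and is $(A^{\mathrm{op}}d_{P^{\mathrm{op}}}^{-1},1_{B^{\mathrm{op}}})$, as in the paper. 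Your $d_P^{-1}$ lands in $(P^{\mathrm{op}}\!\downarrow\!B^{\mathrm{op}})^{\mathrm{op}}$, not in $(P\!\downarrow\!B)^{\mathrm{op}}$, so the composite $A^{\mathrm{op}}d_P^{-1}$ does not typecheck.
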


\begin{proof}
The opposite construction is involutive on categories and functors.
A 2-cell from $(T,S)$ to $(T',S')$ in
$(\CAT^{\mathbf 2})^{\mathrm{co}}$ is a 2-cell from $(T',S')$ to
$(T,S)$ in $\CAT^{\mathbf 2}$; taking componentwise opposites reverses
it once more and therefore gives a 2-cell from
$(T^{\mathrm{op}},S^{\mathrm{op}})$ to
$(T'^{\mathrm{op}},S'^{\mathrm{op}})$.  Thus \Cref{eq:opposite-2-isomorphism}
is a strict 2-isomorphism, with itself as inverse after the canonical
involutive identifications $(E^{\mathrm{op}})^{\mathrm{op}}=E$ fixed
above.

Transporting $\C$ and its unit and multiplication through
\Cref{eq:opposite-2-isomorphism} gives the monad
$\mathcal O\,\C^{\mathrm{co}}\mathcal O^{-1}$.  The 2-natural family
of strict isomorphisms in \Cref{eq:dual-comma-identification}
transports this monad to $\mathbb F$.  Direct application of opposites
gives the formulas for
$J^{P}$ and the multiplication displayed above.  Since $d_P$ and its
inverse are strict functors and all comparison squares commute
strictly, no pseudonatural coherence data enter the construction.
The chosen comma reordering is involutive and is strictly compatible
with objects, arrows, squares, and 2-cells.

A designated cocartesian arrow for $P^{\mathrm{op}}$ is, after taking
opposites, a designated cartesian arrow for $P$.  The split identity
and composition equations, strict preservation by a square, and the
equation defining a 2-cell are all transported componentwise.  This
gives $\mathcal O_{\mathrm{cofib}}$.  Transporting an action, a strict
algebra morphism, and an algebra 2-cell in the same way gives
$\mathcal O_{\mathrm{alg}}$.  Explicitly, the action transported from
$(P,A)$ to $P^{\mathrm{op}}$ is
\begin{equation*}
  \bigl(A^{\mathrm{op}}d_{P^{\mathrm{op}}}^{-1},
  1_{B^{\mathrm{op}}}\bigr)\colon
  \mathbb F(P^{\mathrm{op}})\longrightarrow P^{\mathrm{op}}.
\end{equation*}
In both cases the inverse is again formed
by taking opposites, so the two functors in
\Cref{eq:dual-2-isomorphisms} are strict 2-isomorphisms.
\end{proof}

\subsection{Split fibrations}

An arrow $e\colon z\to x$ is $P$-\emph{cartesian}
if every $g\colon y\to x$ and $v\colon Py\to Pz$ satisfying
$Pg=(Pe)v$ admit a unique $h\colon y\to z$ with
$eh=g$ and $Ph=v$.  A \emph{fibration} has such a lifting with
codomain $x$ for every $u\colon b\to Px$.  A split cleavage designates
one of them,
\begin{equation*}
  \delta^x_u\colon u^*x\longrightarrow x,
\end{equation*}
and satisfies strictly
\begin{equation*}
  (1_a)^*=1_{E_a},
  \qquad
  (uv)^*=v^*u^*,
  \qquad
  \delta^x_{1_{Px}}=1_x,
  \qquad
  \delta^x_{uv}=\delta^x_u\delta^{u^*x}_v.
\end{equation*}
Here $u\colon b\to Px$ and $v\colon c\to b$.  The 2-category
$\SFib$ has these split fibrations as objects, the squares satisfying
$T\delta^x_u=\delta^{Tx}_{Su}$ as 1-cells, and all arrow-2-category
2-cells between them.  Equivalently, these definitions are obtained
from \Cref{def:scofib} by the opposite construction of
\Cref{prop:dual-comma-monad}.

\begin{corollary}[Dual strict 2-monadicity]\label{cor:fibration}
The comparison 2-functor
\begin{equation*}
  \Gamma_{\mathrm f}\colon\SFib\longrightarrow\Alg(\mathbb F)
\end{equation*}
is an isomorphism of 2-categories over $\CAT^{\mathbf 2}$.  Hence split
Grothendieck fibrations are strictly 2-monadic when the base is allowed
to vary.
\end{corollary}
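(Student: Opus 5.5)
The plan is to obtain $\Gamma_{\mathrm f}$ by transporting the strict comparison $\Gamma$ of \Cref{thm:main} along the two strict 2-isomorphisms of \Cref{prop:dual-comma-monad}. Concretely, I would define
\begin{equation*}
  \Gamma_{\mathrm f}:=\mathcal O_{\mathrm{alg}}\,\Gamma^{\mathrm{co}}\,
  \mathcal O_{\mathrm{cofib}}^{-1}\colon
  \SFib\longrightarrow\SCoFib^{\mathrm{co}}
  \longrightarrow\Alg(\C)^{\mathrm{co}}
  \longrightarrow\Alg(\mathbb F).
\end{equation*}
This is a composite of strict 2-isomorphisms, because $(-)^{\mathrm{co}}$ preserves isomorphisms of 2-categories. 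Hence it is a strict 2-isomorphism.

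Two checks remain. First, it lies over $\CAT^{\mathbf 2}$. Second, it agrees with the comparison 2-functor induced by the free split fibration 2-adjunction, so that monadicity in the sense fixed after \Cref{def:pseudoalgebras} follows.

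For the first check, I would use the fact that $\Gamma$ commutes with the forgetful 2-functors. Both $\mathcal O_{\mathrm{cofib}}$ and $\mathcal O_{\mathrm{alg}}$ cover $\mathcal O$ on underlying arrow 2-categories. So the forgetful image of $\Gamma_{\mathrm f}$ is $\mathcal O\,\mathcal O^{-1}$, which is the identity by involutivity of the chosen opposites.

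For the second check, I would compute explicitly. A split fibration $P$ with cleavage $\delta^x_u\colon u^*x\to x$ goes under $\mathcal O_{\mathrm{cofib}}^{-1}$ to the split cofibration $P^{\mathrm{op}}$. Then $\Gamma$ gives it the action $L^{P^{\mathrm{op}}}$, and $\mathcal O_{\mathrm{alg}}$ turns this into $(L^{P^{\mathrm{op}}})^{\mathrm{op}}d_{P^{\mathrm{op}}}^{-1}$. I would then verify that this functor $B\!\downarrow\!P\to E$ is the dual of $L^P$. It sends $(b,u,x)$ to $u^*x$, and it sends an arrow $(s,f)$ to the unique arrow over $s$ satisfying $\delta^y_v\,(\text{it})=f\,\delta^x_u$; this follows from the defining property in \Cref{lem:action-from-cleavage} read in $E^{\mathrm{op}}$. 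This functor is the counit of the dual free-split-fibration 2-adjunction. That adjunction is itself the transport of \Cref{thm:free-adjunction} through $\mathcal O$, and its induced 2-monad is $\mathbb F$ by \Cref{prop:dual-comma-monad}.

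For 1-cells and 2-cells, no computation is needed beyond noting that every functor involved leaves underlying squares and arrow-2-category 2-cells unchanged up to $\mathcal O$. Therefore the 1-cells of $\Alg(\mathbb F)$ are exactly the strictly cartesian-lift-preserving squares. The 2-cells are all 2-cells of $\CAT^{\mathbf 2}$: the variance reversal in $(-)^{\mathrm{co}}$ is undone twice, once in the cofibration side and once in the algebra side.

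The main obstacle is bookkeeping of variance. One must make sure that the ${}^{\mathrm{co}}$ on both sides matches, so that a 2-cell $(\beta,\alpha)$ of split fibrations is sent to an $\mathbb F$-algebra 2-cell with the same direction. One must also make sure that $d$ is compatible with the unit and multiplication, so that the transported algebra laws are literally the $\mathbb F$-laws. I would settle this by checking on objects and arrows that $d_P J$ and $d_P M$ agree with the opposite forms of $H^{P^{\mathrm{op}}}$ and $M^{P^{\mathrm{op}}}$, using the explicit formulas recorded before \Cref{prop:dual-comma-monad}. All other steps are formal.
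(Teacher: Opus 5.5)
Your proposal is correct and is essentially the paper's proof. The paper also defines $\Gamma_{\mathrm f}$ as the conjugate $\mathcal O_{\mathrm{alg}}\,\Gamma^{\mathrm{co}}\,\mathcal O_{\mathrm{cofib}}^{-1}$, cancels the two base changes over $\mathcal O$, and identifies the conjugate with the fibration comparison by unwinding formulas; it reads off the reconstructed cleavage $u^{*}x=A(b,u,x)$, $\delta^x_u=A(u,1_x)$, while you dualize $L^P$ and the free adjunction, which is the same check (one index slip: the action transported to $P=(P^{\mathrm{op}})^{\mathrm{op}}$ is $(L^{P^{\mathrm{op}}})^{\mathrm{op}}d_P^{-1}$, not $(L^{P^{\mathrm{op}}})^{\mathrm{op}}d_{P^{\mathrm{op}}}^{-1}$).
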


\begin{proof}
Taking co-2-categories in \Cref{thm:main} and conjugating by the two
strict 2-isomorphisms of \Cref{prop:dual-comma-monad} gives
\begin{equation*}
\begin{tikzcd}[column sep=huge,row sep=large]
\SCoFib^{\mathrm{co}} \arrow[r,"\Gamma^{\mathrm{co}}"]
  \arrow[d,"{\mathcal O_{\mathrm{cofib}}}"',"\cong"]
& \Alg(\C)^{\mathrm{co}}
  \arrow[d,"{\mathcal O_{\mathrm{alg}}}","\cong"'] \\
\SFib \arrow[r,"\Gamma_{\mathrm f}"']
& \Alg(\mathbb F).
\end{tikzcd}
\end{equation*}
Equivalently,
\begin{equation*}
  \Gamma_{\mathrm f}
  =\mathcal O_{\mathrm{alg}}\,
   \Gamma^{\mathrm{co}}\,
   \mathcal O_{\mathrm{cofib}}^{-1}.
\end{equation*}
The two vertical 2-isomorphisms in the displayed square lie over
$\mathcal O\colon(\CAT^{\mathbf 2})^{\mathrm{co}}
\to\CAT^{\mathbf 2}$.  Their base changes therefore cancel in the
conjugate, so $\Gamma_{\mathrm f}$ lies over the identity of
$\CAT^{\mathbf 2}$.  It is a strict 2-isomorphism.  To identify this
conjugate
with the usual fibration comparison, observe that the object
reconstruction in
\Cref{eq:delta-from-action} becomes
\begin{equation*}
  u^{*}x=A(b,u,x),
  \qquad
  \delta^{x}_{u}=A(u,1_x)\colon u^{*}x\longrightarrow x,
\end{equation*}
where $(u,1_x)\colon(b,u,x)\to J^{P}x$ is the canonical comma arrow.
These are the designated cartesian liftings of the transported split
cleavage.  Hence the conjugate is the comparison for split fibrations,
completing the proof.
\end{proof}

\section{Conclusion}

Street's formal theory of monads supplies the Eilenberg--Moore language
in which a cleavage, its coherence, the preservation law for squares,
and the compatibility of transformations form one 2-dimensional
algebraic structure \citep{Street1972}.  His lax-idempotent treatment of
fibrations explains the adjunction between an action and the unit and
the passage from strict to coherent transport \citep{Street1980}.  The
comparison $\kappa$ adds the step developed here: restriction of scalars
turns that transport action into the factorization action on the total
category.  Consequently the cocartesian--vertical factorization and its
orthogonal closure are determined coherently by the Grothendieck
transport data.

The construction has been carried out here in $\CAT$.  A natural next
question is whether the comma-to-squaring comparison admits versions
for genuinely 2-dimensional fibrations, in the sense developed by
\citet{Hermida1999} and \citet{Buckley2014}; see also the related global
perspective of \citet[Section~9]{PeschkeTholen2020}.  The lax comma
2-categories of \citet{ClementinoLucatelliNunes2024} provide another
possible setting for such an extension.  A second question
concerns fibrations carrying additional monoidal or enriched structure.
Monoidal Grothendieck constructions are studied by
\citet{MoellerVasilakopoulou2020}, while sufficient conditions for
monoidal closure of their total categories are developed by
\citet{LucatelliNunesVakar2025}; for enriched fibrations, see
\citet{Vasilakopoulou2018}.  Such extensions should clarify which
factorization structures on the corresponding total 2-categories or
enriched categories are controlled by higher or enriched transport.
We leave these questions for future work.

\section*{Acknowledgements}
This work grew out of the first author's 2023 Fields Research
Fellowship, \emph{Grothendieck Descent Theory, Fibrations, and
Factorizations}, hosted by the second author.  We thank the Fields
Institute for Research in Mathematical Sciences for its support and
hospitality.

\section*{Funding}
The first author was supported by the Fields
Institute for Research in Mathematical Sciences through a Fields
Research Fellowship in 2023, and by the Centre for Mathematics of the
University of Coimbra (CMUC) under the Funda\c{c}\~ao para a Ci\^encia
e a Tecnologia (FCT), through the grants UID/00324/2025 and
UID/PRR/00324/2025.  Part of the work was carried out while the first
author was supported by the Deutsche Forschungsgemeinschaft (DFG,
German Research Foundation) through the project \emph{Higher-Order
Monad-based Programming and Reasoning} (HOMBRe), project number
501369690, led by Sergey Goncharov, during his postdoctoral appointment
at the School of Computer Science, University of Birmingham.
The funders had no role in the preparation of the manuscript.

\bibliographystyle{plainnat}
\bibliography{references}

\begin{thebibliography}{32}
\providecommand{\natexlab}[1]{#1}
\providecommand{\url}[1]{\texttt{#1}}
\expandafter\ifx\csname urlstyle\endcsname\relax
  \providecommand{\doi}[1]{doi: #1}\else
  \providecommand{\doi}{doi: \begingroup \urlstyle{rm}\Url}\fi

\bibitem[Blackwell et~al.(1989)Blackwell, Kelly, and
  Power]{BlackwellKellyPower1989}
Robert Blackwell, G.~Max Kelly, and A.~John Power.
\newblock Two-dimensional monad theory.
\newblock \emph{J. Pure Appl. Algebra}, 59\penalty0 (1):\penalty0 1--41, 1989.
\newblock \doi{10.1016/0022-4049(89)90160-6}.

\bibitem[Buckley(2014)]{Buckley2014}
Mitchell Buckley.
\newblock Fibred 2-categories and bicategories.
\newblock \emph{J. Pure Appl. Algebra}, 218\penalty0 (6):\penalty0 1034--1074,
  2014.
\newblock \doi{10.1016/j.jpaa.2013.11.002}.
\newblock URL \url{https://arxiv.org/abs/1212.6283}.

\bibitem[Clementino and L{\'o}pez~Franco(2016)]{ClementinoLopezFranco2016}
Maria~Manuel Clementino and Ignacio L{\'o}pez~Franco.
\newblock Lax orthogonal factorisation systems.
\newblock \emph{Adv. Math.}, 302:\penalty0 458--528, 2016.
\newblock \doi{10.1016/j.aim.2016.07.028}.
\newblock URL \url{https://arxiv.org/abs/1503.06469}.

\bibitem[Clementino and Lucatelli~Nunes(2024)]{ClementinoLucatelliNunes2024}
Maria~Manuel Clementino and Fernando Lucatelli~Nunes.
\newblock Lax comma 2-categories and admissible 2-functors.
\newblock \emph{Theory Appl. Categ.}, 40\penalty0 (6):\penalty0 180--226, 2024.
\newblock \doi{10.70930/tac/re3rk47b}.
\newblock URL \url{https://www.tac.mta.ca/tac/volumes/40/6/40-06abs.html}.

\bibitem[Emmenegger et~al.(2024)Emmenegger, Mesiti, Rosolini, and
  Streicher]{EmmeneggerEtAl2024}
Jacopo Emmenegger, Luca Mesiti, Giuseppe Rosolini, and Thomas Streicher.
\newblock A comonad for {Grothendieck} fibrations.
\newblock \emph{Theory Appl. Categ.}, 40\penalty0 (13):\penalty0 371--389,
  2024.
\newblock \doi{10.70930/tac/3d9q6q64}.

\bibitem[Fauser and Vickers(2014)]{FauserVickers2014}
Bertfried Fauser and Steven Vickers.
\newblock Geometric constructions preserve fibrations.
\newblock arXiv:1411.2457 [math.CT], 2014.
\newblock URL \url{https://arxiv.org/abs/1411.2457}.

\bibitem[Grandis and Tholen(2006)]{GrandisTholen2006}
Marco Grandis and Walter Tholen.
\newblock Natural weak factorization systems.
\newblock \emph{Arch. Math. (Brno)}, 42\penalty0 (4):\penalty0 397--408, 2006.
\newblock URL \url{https://eudml.org/doc/249802}.

\bibitem[Gray(1966)]{Gray1966}
John~W. Gray.
\newblock Fibred and cofibred categories.
\newblock In \emph{Proceedings of the Conference on Categorical Algebra, La
  Jolla, 1965}, pages 21--83. Springer, New York, 1966.
\newblock \doi{10.1007/978-3-642-99902-4_2}.

\bibitem[Grothendieck(1971)]{Grothendieck1971}
Alexander Grothendieck.
\newblock Cat\'egories fibr\'ees et descente.
\newblock In \emph{Rev\^etements \`etales et groupe fondamental ({SGA} 1)},
  volume 224 of \emph{Lecture Notes in Mathematics}, pages 145--194. Springer,
  Berlin, 1971.
\newblock \doi{10.1007/BFb0058662}.

\bibitem[Hermida(1999)]{Hermida1999}
Claudio Hermida.
\newblock Some properties of {Fib} as a fibred 2-category.
\newblock \emph{J. Pure Appl. Algebra}, 134\penalty0 (1):\penalty0 83--109,
  1999.
\newblock \doi{10.1016/S0022-4049(97)00129-1}.

\bibitem[Jacobs(1999)]{Jacobs1999}
Bart Jacobs.
\newblock \emph{Categorical Logic and Type Theory}, volume 141 of \emph{Studies
  in Logic and the Foundations of Mathematics}.
\newblock North-Holland, Amsterdam, 1999.
\newblock ISBN 0-444-50170-3.

\bibitem[Janelidze and Tholen(1999)]{JanelidzeTholen1999}
George Janelidze and Walter Tholen.
\newblock Functorial factorization, well-pointedness and separability.
\newblock \emph{J. Pure Appl. Algebra}, 142\penalty0 (2):\penalty0 99--130,
  1999.
\newblock \doi{10.1016/S0022-4049(98)00095-4}.

\bibitem[Kelly and Street(1974)]{KellyStreet1974}
G.~Max Kelly and Ross Street.
\newblock Review of the elements of 2-categories.
\newblock In G.~Max Kelly, editor, \emph{Category Seminar, Sydney 1972/1973},
  volume 420 of \emph{Lecture Notes in Mathematics}, pages 75--103. Springer,
  Berlin, 1974.
\newblock \doi{10.1007/BFb0063101}.

\bibitem[Kock(1995)]{Kock1995}
Anders Kock.
\newblock Monads for which structures are adjoint to units.
\newblock \emph{J. Pure Appl. Algebra}, 104\penalty0 (1):\penalty0 41--59,
  1995.
\newblock \doi{10.1016/0022-4049(94)00111-U}.

\bibitem[Korostenski and Tholen(1993)]{KorostenskiTholen1993}
Mareli Korostenski and Walter Tholen.
\newblock Factorization systems as {Eilenberg--Moore} algebras.
\newblock \emph{J. Pure Appl. Algebra}, 85\penalty0 (1):\penalty0 57--72, 1993.
\newblock \doi{10.1016/0022-4049(93)90171-O}.

\bibitem[Lucatelli~Nunes(2019)]{LucatelliNunes2019}
Fernando Lucatelli~Nunes.
\newblock Pseudoalgebras and non-canonical isomorphisms.
\newblock \emph{Appl. Categ. Structures}, 27\penalty0 (1):\penalty0 55--63,
  2019.
\newblock \doi{10.1007/s10485-018-9541-3}.

\bibitem[Lucatelli~Nunes and V{\'a}k{\'a}r(2023)]{LucatelliNunesVakar2023}
Fernando Lucatelli~Nunes and Matthijs V{\'a}k{\'a}r.
\newblock {CHAD} for expressive total languages.
\newblock \emph{Math. Structures Comput. Sci.}, 33\penalty0 (4--5):\penalty0
  311--426, 2023.
\newblock \doi{10.1017/S096012952300018X}.
\newblock URL \url{https://arxiv.org/abs/2110.00446}.

\bibitem[Lucatelli~Nunes and V{\'a}k{\'a}r(2025)]{LucatelliNunesVakar2025}
Fernando Lucatelli~Nunes and Matthijs V{\'a}k{\'a}r.
\newblock Monoidal closure of {Grothendieck} constructions via
  {$\Sigma$}-tractable monoidal structures and {Dialectica} formulas.
\newblock \emph{Theory Appl. Categ.}, 44\penalty0 (35):\penalty0 1153--1217,
  2025.
\newblock URL \url{https://www.tac.mta.ca/tac/volumes/44/35/44-35abs.html}.

\bibitem[Lucatelli~Nunes et~al.(2023)Lucatelli~Nunes, Prezado, and
  Sousa]{LucatelliNunesPrezadoSousa2023}
Fernando Lucatelli~Nunes, Rui Prezado, and Lurdes Sousa.
\newblock Cauchy completeness, lax epimorphisms and effective descent for split
  fibrations.
\newblock \emph{Bull. Belg. Math. Soc. Simon Stevin}, 30\penalty0 (1):\penalty0
  130--139, 2023.
\newblock \doi{10.36045/j.bbms.221021}.
\newblock URL \url{https://arxiv.org/abs/2210.12021}.

\bibitem[Moeller and Vasilakopoulou(2020)]{MoellerVasilakopoulou2020}
Joe Moeller and Christina Vasilakopoulou.
\newblock Monoidal {Grothendieck} construction.
\newblock \emph{Theory Appl. Categ.}, 35\penalty0 (31):\penalty0 1159--1207,
  2020.
\newblock URL \url{https://www.tac.mta.ca/tac/volumes/35/31/35-31abs.html}.

\bibitem[Perrone and Tholen(2022)]{PerroneTholen2022}
Paolo Perrone and Walter Tholen.
\newblock Kan extensions are partial colimits.
\newblock \emph{Appl. Categ. Structures}, 30\penalty0 (4):\penalty0 685--753,
  2022.
\newblock \doi{10.1007/s10485-021-09671-9}.

\bibitem[Peschke and Tholen(2020)]{PeschkeTholen2020}
George Peschke and Walter Tholen.
\newblock Diagrams, fibrations, and the decomposition of colimits.
\newblock arXiv:2006.10890 [math.CT], 2020.
\newblock URL \url{https://arxiv.org/abs/2006.10890}.

\bibitem[Rosebrugh and Wood(2002{\natexlab{a}})]{RosebrughWood2002}
Robert Rosebrugh and R.~J. Wood.
\newblock Coherence for factorization algebras.
\newblock \emph{Theory Appl. Categ.}, 10\penalty0 (6):\penalty0 134--147,
  2002{\natexlab{a}}.
\newblock \doi{10.70930/tac/g0lqukuu}.

\bibitem[Rosebrugh and Wood(2002{\natexlab{b}})]{RosebrughWood2002Distributive}
Robert Rosebrugh and R.~J. Wood.
\newblock Distributive laws and factorization.
\newblock \emph{J. Pure Appl. Algebra}, 175\penalty0 (1--3):\penalty0 327--353,
  2002{\natexlab{b}}.
\newblock \doi{10.1016/S0022-4049(02)00140-8}.

\bibitem[Rosick\'{y} and Tholen(2002)]{RosickyTholen2002}
Ji\v{r}\'{\i} Rosick\'{y} and Walter Tholen.
\newblock Lax factorization algebras.
\newblock \emph{J. Pure Appl. Algebra}, 175\penalty0 (1--3):\penalty0 355--382,
  2002.
\newblock \doi{10.1016/S0022-4049(02)00141-X}.

\bibitem[Rosick\'{y} and Tholen(2007)]{RosickyTholen2007}
Ji\v{r}\'{\i} Rosick\'{y} and Walter Tholen.
\newblock Factorization, fibration and torsion.
\newblock \emph{J. Homotopy Relat. Struct.}, 2\penalty0 (2):\penalty0 295--314,
  2007.
\newblock URL \url{https://eudml.org/doc/230964}.

\bibitem[Street(1972)]{Street1972}
Ross Street.
\newblock The formal theory of monads.
\newblock \emph{J. Pure Appl. Algebra}, 2\penalty0 (2):\penalty0 149--168,
  1972.
\newblock \doi{10.1016/0022-4049(72)90019-9}.

\bibitem[Street(1974)]{Street1974}
Ross Street.
\newblock Fibrations and {Yoneda}'s lemma in a 2-category.
\newblock In G.~Max Kelly, editor, \emph{Category Seminar, Sydney 1972/1973},
  volume 420 of \emph{Lecture Notes in Mathematics}, pages 104--133. Springer,
  Berlin, 1974.
\newblock \doi{10.1007/BFb0063102}.

\bibitem[Street(1980)]{Street1980}
Ross Street.
\newblock Fibrations in bicategories.
\newblock \emph{Cah. Topol. G\'{e}om. Diff\'{e}r. Cat\'{e}g.}, 21\penalty0
  (2):\penalty0 111--160, 1980.
\newblock URL \url{https://www.numdam.org/item/CTGDC_1980__21_2_111_0/}.

\bibitem[Street and Walters(1973)]{StreetWalters1973}
Ross Street and R.~F.~C. Walters.
\newblock The comprehensive factorization of a functor.
\newblock \emph{Bull. Amer. Math. Soc.}, 79\penalty0 (5):\penalty0 936--941,
  1973.
\newblock \doi{10.1090/S0002-9904-1973-13268-9}.

\bibitem[Vasilakopoulou(2018)]{Vasilakopoulou2018}
Christina Vasilakopoulou.
\newblock On enriched fibrations.
\newblock \emph{Cah. Topol. G\'{e}om. Diff\'{e}r. Cat\'{e}g.}, 59\penalty0
  (4):\penalty0 354--387, 2018.
\newblock URL
  \url{https://cahierstgdc.com/wp-content/uploads/2018/10/Vasilakopoulou-LIX-4.pdf}.

\bibitem[Z{\"o}berlein(1976)]{Zoberlein1976}
Volker Z{\"o}berlein.
\newblock Doctrines on 2-categories.
\newblock \emph{Math. Z.}, 148\penalty0 (3):\penalty0 267--279, 1976.
\newblock \doi{10.1007/BF01214522}.

\end{thebibliography}

\end{document}